\tikzset{node distance=3cm, auto}
\newcommand{ \R } { \mathbb{R} }
\newcommand{ \N } { \mathbb{N} }
\newcommand{ \Z } { \mathbb{Z} }
\newcommand{\w}{\omega}
\newcommand{\cont}{\mathfrak{c}}
\newcommand{\script}{\mathcal}
\newcommand{\parentheses}[1]{{\left( {#1} \right)}}
\newcommand{\sequence}[1]{{\langle {#1} \rangle}}
\def\Sequence#1:#2{\sequence{{#1} \colon {#2}}}
\newcommand{\p}{\parentheses}
\newcommand{\of}{\parentheses}
\newcommand{\closure}[1]{\overline{#1}}
\newcommand{\closureIn}[2]{\closure{#1}^{#2}}
\newcommand{\interior}[1]{\mathrm{int}\of{#1}}
\newcommand{\Set}[1]{{\left\lbrace {#1} \right\rbrace}}
\newcommand{\singleton}{\Set}
\newcommand{\union}{\cup}
\newcommand{\intersect}{\cap}
\newcommand{\Union}{\bigcup}
\newcommand{\cardinality}[1]{{\left\lvert {#1} \right\rvert}}
\def\set#1:#2{\Set{{#1} \colon {#2}}}
\newcommand{\diam}[1]{\textnormal{diam}{\left({#1} \right)}}
\newcommand{\singletonDeletion}[1]{\script{D}\parentheses{#1}}
\newcommand{\Intersection}{\bigcap}
\newcommand{\concat}{%
  \mathord{
    \mathchoice
    {\raisebox{1ex}{\scalebox{.7}{$\frown$}}}
    {\raisebox{1ex}{\scalebox{.7}{$\frown$}}}
    {\raisebox{.7ex}{\scalebox{.5}{$\frown$}}}
    {\raisebox{.7ex}{\scalebox{.5}{$\frown$}}}
  }
}
\newcommand{\relativization}[2]{\left.{#1}\right|\!{ {#2}}}
\begin{document}
\title{Reconstructing Compact Metrizable Spaces}
\author[Gartside, Pitz, Suabedissen]{Paul Gartside, Max F.\ Pitz, Rolf Suabedissen}
\thanks{This research formed part of the second author's thesis at the University of Oxford \cite{thesis}, supported by an EPSRC studentship.}
\address{The Dietrich School of Arts and Sciences\\301 Thackeray Hall\\Pittsburgh PA 15260}
\email{gartside@math.pitt.edu}
\address{Mathematical Institute\\University of Oxford\\Oxford OX2 6GG\\United Kingdom}
\email[Corresponding author]{max.f.pitz@gmail.com}
\email{rolf.suabedissen@maths.ox.ac.uk}
\subjclass[2010]{Primary 54E45; Secondary 05C60, 54B05, 54D35}
\keywords{Reconstruction conjecture, topological reconstruction,  finite compactifications, universal sequence}

\begin{abstract}
The deck, $\singletonDeletion{X}$, of a topological space $X$ is the set $\singletonDeletion{X}=\set{[X \setminus \singleton{x}]}:{x \in X}$, where $[Y]$ denotes the homeomorphism class of $Y$. A space $X$ is (topologically) \emph{reconstructible}  if whenever $\singletonDeletion{Z}=\singletonDeletion{X}$ then $Z$ is homeomorphic to $X$. It is known that every (metrizable) continuum is reconstructible, whereas the Cantor set is non-reconstructible.

The main result of this paper characterises the non-reconstructible compact metrizable spaces as precisely those where for each point $x$ there is a sequence $\Sequence{B_n^x}:{n \in \N}$ of pairwise disjoint clopen subsets converging to $x$ such that $B_n^x$ and $B_n^y$ are homeomorphic for each $n$, and all $x$ and $y$. 

In a non-reconstructible compact metrizable space the set of $1$-point components forms a  dense $G_\delta$. For $h$-homogeneous spaces, this condition is sufficient for non-reconstruction. A wide variety of spaces with a dense $G_\delta$ set of $1$-point components are presented, some reconstructible and others not reconstructible. 
\end{abstract}

\maketitle
\thispagestyle{plain}

\newtheorem*{recresult}{Theorem~\ref{fincompactrec}}
\newtheorem{mythm}{Theorem} \numberwithin{mythm}{section} 
\newtheorem{myprop}[mythm]{Proposition}
\newtheorem{myobs}[mythm]{Observation}
\newtheorem{mycor}[mythm]{Corollary}
\newtheorem{mylem}[mythm]{Lemma} 
\newtheorem{myquest}[mythm]{Question} 
\newtheorem{myprob}[mythm]{Problem}
\newtheorem*{myconj}{Conjecture}
\newtheorem{mydef}[mythm]{Definition}
\newtheorem{myclaim}[mythm]{Claim}
\newtheorem{myremark}[mythm]{Remark}
\newtheorem{mycase}{Case}
\newtheorem{mycase2}{Case}
\newtheorem*{myclmn}{Claim}

\section{Introduction}
The \emph{deck} of a  graph $G$ is the set $\singletonDeletion{G}=\set{[G-x]}:{x \in G}$, where $[G-x]$ is  the isomorphism class of the graph obtained from $G$ by deleting the vertex $x$, and all incident edges. 
Then a graph $G$ is  \emph{reconstructible}  if whenever $\singletonDeletion{G}=\singletonDeletion{H}$ then  $G$ is isomorphic to $H$.
Kelly and Ulam's well-known \emph{Graph Reconstruction Conjecture} from 1941 proposes that every finite graph with at least three vertices is reconstructible. For more information, see for example \cite{Bondy}. 

Similarly, the \emph{deck} of a topological space $X$ is the set $\singletonDeletion{X}=\set{[X \setminus \singleton{x}]}:{x \in X}$, where $[X \setminus \{x\}]$ denotes the homeomorphism class of $X \setminus \{x\}$. Any space $Y$ homeomorphic to some $X \setminus \{x\}$ is called a \emph{card} of $X$. A space $Z$ is a \emph{reconstruction} of $X$ if $Z$ has the same cards as $X$, i.e.\ $\singletonDeletion{Z}=\singletonDeletion{X}$. Further, $X$ is \emph{topologically reconstructible} if whenever $\singletonDeletion{Z}=\singletonDeletion{X}$ then $Z$ and $X$ are homeomorphic.

Many familiar spaces such as $I=[0,1]$, Euclidean $n$-space, $n$-spheres, the rationals and the irrationals are reconstructible, as are compact spaces containing an isolated point, \cite{recpaper}. However the  Cantor set $C$ and  $C \setminus \{0\}$ have the same deck, $\{C \setminus \{0\}\}$. Hence the Cantor set is an example of a non-reconstructible compact, metrizable space. Note that $C \setminus \{0\}$ is non-compact. Indeed, \cite[5.2]{recpaper} states that if all reconstructions of a compact space are compact then it is reconstructible. 

In the other direction, the authors proved that every continuum, i.e.\ every connected compact metrizable space, is reconstructible, \cite[4.12]{part1}. The proof proceeds in two steps. First, every continuum has cards  with a maximal finite compactification. Second, any compact space that has a card with a maximal finite compactification has only compact reconstructions---and so it is reconstructible by the result mentioned above, \cite[3.11]{part1}.

We start this paper by showing that a compact metrizable space $X$ without isolated points has a card with a maximal finite compactification if and only if  $C_1(X) = \set{x \in X}:{ \singleton{x} \textnormal{ is a 1-point component of }X}$ is \emph{not} a dense $G_\delta$ subset of $X$ (Theorem~\ref{charfinitecompactificationcptmetr}). Hence, every non-reconstructible compact metrizable space contains a dense $G_\delta$ of $1$-point components (Theorem~\ref{lotsofreconstrutible}). However, we will also give examples of compact metrizable spaces which are reconstructible despite having a dense set of $1$-point components (Lemma~\ref{C1dense_butrecon}). Hence, our approach for continua---looking for cards with maximal finite compactifications---does not work in general. 


Instead, we characterise when a compact metrizable space is reconstructible via \lq universal sequences\rq. A space $X$ is said to have a \emph{universal sequence} if for each point $x$ there is a sequence   $\Sequence{B_n^x}:{n \in \N}$ of pairwise disjoint clopen subsets converging to $x$ such that $B_n^x$ and $B_n^y$ are homeomorphic for every $x,y\in X$ and for each $n \in \N$. 
\begin{mythm}[Reconstruction Characterisation]\label{recchartthm}
A compact metrizable space is reconstructible if and only if it does not have a universal sequence.
\end{mythm}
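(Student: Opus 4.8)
The plan is to prove both directions by constructing, from the relevant hypothesis, an explicit non-trivial reconstruction (for the ``only if'' direction) or by showing that every reconstruction is homeomorphic to $X$ (for the ``if'' direction).

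First, suppose $X$ has a universal sequence $\Sequence{B_n^x}:{n \in \N}$. I would produce a space $Z \not\cong X$ with $\singletonDeletion{Z} = \singletonDeletion{X}$, thereby witnessing non-reconstruction. The model to keep in mind is the Cantor set $C$ and $C \setminus \{0\}$: there the universal sequence at each point is a shrinking sequence of clopen copies of $C$, and the non-trivial reconstruction is obtained by ``deleting a point at infinity''. In general, fix a point $x_0$ and let $Z = X \setminus \{x_0\}$; the task is to show $Z$ has the same deck as $X$. Clearly every card of $Z$ is a card of $X$ (removing a further point), so the content is the reverse inclusion: for each $x \in X$ we need $X \setminus \{x\} \cong Z \setminus \{z\}$ for some $z$. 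One uses the clopen sets $B_n^x$ converging to $x$ together with $B_n^{x_0}$ converging to $x_0$: since $B_n^x \cong B_n^{x_0}$ for every $n$, one can build a homeomorphism between $X \setminus \{x\}$ and $X \setminus \{x_0\}$ by a back-and-forth ``Hilbert-hotel'' shift through the telescoped clopen pieces, mapping the $n$-th piece around $x$ to the $n$-th piece around $x_0$ and absorbing the missing point into the tail. Making this shift well defined and continuous at the limit point is the first technical step; one works with the compactifications $B_n^x \cup \{x\}$ and $B_n^{x_0} \cup \{x_0\}$, which are homeomorphic since each is a one-point compactification-like gluing of homeomorphic pieces, and checks the glued map is continuous.

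For the converse, suppose $X$ is a non-reconstructible compact metrizable space; I must extract a universal sequence. By Theorem~\ref{lotsofreconstrutible}, $C_1(X)$ is a dense $G_\delta$ in $X$, so in particular $X$ has no isolated points and $1$-point components are topologically abundant. Fix a non-trivial reconstruction $Z$ of $X$. Since $X$ is compact but (by \cite[5.2]{recpaper}) $Z$ cannot be compact, $Z$ is a non-compact reconstruction; pick a point $z_\infty$ in the remainder of some compactification, or rather work with the fact that $Z$ and some $X \setminus \{x\}$ share all cards. The idea is that the failure of reconstruction forces, at every point $x$ of $X$, a local self-similarity: because $X \setminus \{x\} \cong Z$ for a fixed non-compact $Z$ independent of $x$ up to the deck, the local structure of $X$ near any two points $x, y$ must ``match'' arbitrarily finely. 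Concretely, using that $\{x\}$ is a $1$-point component for a dense $G_\delta$ of $x$, one finds arbitrarily small clopen neighbourhoods, and by comparing the cards $X \setminus \{x\}$ and $X \setminus \{y\}$ one shows these clopen neighbourhoods can be chosen homeomorphically across all points simultaneously; iterating and shrinking (using metrizability to force convergence to the point) yields the pairwise disjoint clopen sequence $\Sequence{B_n^x}:{n \in \N}$ with $B_n^x \cong B_n^y$.

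I expect the converse to be the main obstacle. The delicate point is passing from ``$X$ has a non-trivial reconstruction'' — an essentially global, single statement — to the uniform local conclusion that at \emph{every} point one can find clopen pieces matching those at every other point. This requires carefully tracking how a homeomorphism between a card of $Z$ and a card of $X$ acts near the deleted points, and extracting a coherent family of clopen sets; the use of the dense $G_\delta$ of $1$-point components (and hence, via compactness and metrizability, a convenient basis of clopen sets around such points) should be the key leverage, but organising the bookkeeping so that the homeomorphism types $B_n^x$ genuinely do not depend on $x$ is where the real work lies.
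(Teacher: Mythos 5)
Your overall architecture (build a non-compact reconstruction from a universal sequence; extract a universal sequence from a non-compact reconstruction) matches the paper's, but the forward direction as you propose it would fail. The witnessing reconstruction is \emph{not} the card $Z=X\setminus\singleton{x_0}$: a card of that $Z$ is $X\setminus\singleton{x_0,y}$, i.e.\ $X$ minus \emph{two} points, and your claim that such a space is ``clearly'' a card of $X$ is false (you have the easy and hard inclusions reversed, and the one you call clear is the one that breaks). Concretely, take $X=X_{\script{E}^{(0)}}$ from Theorem~\ref{constructionhhomo1} with $E$ an arc, so that $X$ has a universal sequence and every non-trivial component of $X$ is an arc; let $x_0,y$ be the two endpoints of one arc component. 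Then $\alpha\p{X\setminus\singleton{x_0,y}}\cong X/(x_0\sim y)$ has a circle component, while $\alpha\p{X\setminus\singleton{w}}\cong X$ has only arc and singleton components, so $X\setminus\singleton{x_0,y}$ is homeomorphic to no card of $X$ and $X\setminus\singleton{x_0}$ is not a reconstruction. The correct $Z$ deletes no point: one detaches a clopen sequence, $Z=\p{X\setminus\bigcup\script{U}_{x_0}}\oplus\bigcup\script{U}_{x_0}$, which is non-pseudocompact and hence $\not\cong X$. Proving $\singletonDeletion{Z}=\singletonDeletion{X}$ then genuinely needs the refinement machinery: pass to a thin refinement in which a dense set of points has clopen neighbourhoods of the form $\bigoplus T_n$ (Lemmas~\ref{thinuniversalsequencesystem} and~\ref{Rolfslemma}), and build your ``Hilbert-hotel'' homeomorphism not directly between the sequences at $x$ and $y$ but chained through a bi-infinite family of auxiliary points $z_k$ so that it fixes both $x$ and $y$ (Lemma~\ref{constructpointfixhomeo}); point-fixing yields complement-equivalence and augmentation, which give the two deck inclusions (Lemma~\ref{thm:definitiveNonRec}).

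For the converse your sketch gestures at the right conclusion but misses the actual mechanism, which is simpler and entirely different from ``comparing cards at different points''. Note first that $X\setminus\singleton{x}\cong Z\setminus\singleton{z}$, a card of $Z$, not $Z$ itself. The key structural fact is that $Z$ is locally compact, separable metrizable with every component compact (using that $X$ has $1$-point components), so $\alpha Z$ is metrizable with $\infty$ a $1$-point component, and the \v{S}ura-Bura Lemma splits $Z=\bigoplus_{n\in\N}T_n$ into compact clopen pieces. Transplanting the tail $\langle T_n : n>N_x\rangle$ into $X\setminus\singleton{x}$ gives disjoint compact clopen sets whose union is closed there, and compactness of $X$ alone forces this sequence to converge to $x$; the types $T_n$ are independent of $x$ by construction, so no shrinking, bookkeeping, or uniform local matching is needed. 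The only remaining step is to make the starting index uniform in $x$, which is done by planting two copies of a tail inside open sets with disjoint closures.
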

The reverse implication is the content of Proposition~\ref{universalsequence}, while the forward implication is Proposition~\ref{recchartlemma}.

Theorems~\ref{stronghomogeneousnonrec} and~\ref{tricktrick} connect our characterisation of non-reconstructible compact metrizable spaces with the existence of certain types of dense subsets of $1$-point components. If $X$ is $h$-homogeneous, i.e.\ all non-empty clopen subsets of $X$ are homeomorphic to $X$, then all that is required for $X$ to be non-reconstructible is that $C_1(X)$ is dense. 
And  if $C_1(X)$ has non-empty interior then $X$ is non-reconstructible if and only if the interior of $C_1(X)$ has no isolated points and is dense  in $X$, which  holds if and only if $X$ is a compactification of $C \setminus \{0\}$. It follows, see Section~\ref{section61} for details and more related results, that there are many compact, metrizable non-reconstructible spaces of this type.

For compact metrizable spaces where $C_1(X)$ is both dense and co-dense, the situation is considerably more complex. We provide two methods of constructing such spaces which are $h$-homogeneous and non-reconstructible (Corollary~\ref{firstctblehhom} and Theorem~\ref{constructionhhomo1}). Whereas non-reconstructible $h$-homogeneous spaces have a universal sequence with all terms homeomorphic to each other (and to $X$), we also construct examples without such a `constant' universal sequence (Theorems~\ref{noconseq1} and~\ref{noconseq2}). 

In light of these results there remain a natural question and an open problem.  Every example of a non-reconstructible compact metrizable space that we present appears to have a \emph{unique} non-homeomorphic reconstruction.
\begin{myquest}
Is there an example of a compact space with more than one non-homeomorphic reconstruction? What is the maximal number of non-homeomorphic reconstructions of a compact metrizable space?
\end{myquest}
And second, our techniques used to establish the Reconstruction Characterisation Theorem rely on the spaces being metrizable.
\begin{myprob}
Find a characterisation of the reconstructible (first countable) compact spaces.  
\end{myprob}


\section{Cards with maximal finite compactifications}
\label{section3}

We prove that a compact metrizable space has a card with a maximal finite compactification if and only if the space does not contain a dense $G_\delta$ (a countable intersection of open sets) of 1-point components. It follows that non-reconstructible compact metrizable spaces are highly disconnected. 
 
We need the following two basic results from continuum theory. The component of a point $x \in X$ is denoted by $C_X(x)$, or $C(x)$ when $X$ is clear from context. 
\begin{mylem}[Second \v{S}ura-Bura Lemma, {\cite[A.10.1]{Mill01}}]
\label{adaptsurabura}
A component $C$ of a compact Hausdorff space $X$ has a clopen neighbourhood base in $X$ (for every open set $U \supseteq C$ there is a clopen set $V$ such that $C \subseteq V \subseteq U$). 
\end{mylem}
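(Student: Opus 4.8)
The plan is to derive the lemma from the classical fact that in a compact Hausdorff space the component $C = C_X(x)$ of a point $x$ coincides with its \emph{quasi-component} $Q$, namely the intersection $Q = \bigcap \set{V}:{V \text{ clopen in } X,\ x \in V}$ of all clopen sets containing $x$. Granting $C = Q$, the neighbourhood base property falls out of a short compactness argument: given an open $U \supseteq C$, the set $X \setminus U$ is closed, hence compact, and disjoint from $C = Q$; since $Q$ is an intersection of clopen sets $V_i$, we get $X \setminus U \subseteq \bigcup_i (X \setminus V_i)$, so by compactness finitely many $X \setminus V_{i_1}, \dots, X \setminus V_{i_n}$ already cover $X \setminus U$, and then $V := V_{i_1} \cap \dots \cap V_{i_n}$ is clopen with $C \subseteq V \subseteq U$.

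So the real work is the identity $C = Q$. The inclusion $C \subseteq Q$ is automatic, since for any clopen $V \ni x$ a failure of $C \subseteq V$ would split $C$ into the two nonempty relatively clopen pieces $C \cap V$ and $C \setminus V$, contradicting connectedness. For the reverse inclusion it suffices to show that $Q$ — which is closed, being an intersection of closed sets — is connected, since a connected set containing $x$ must lie inside $C$. Suppose instead $Q = A \cup B$ with $A, B$ disjoint, nonempty, closed in $Q$ and hence in $X$, with $x \in A$. Using normality of the compact Hausdorff space $X$, choose disjoint open $G \supseteq A$ and $H \supseteq B$. Then $X \setminus (G \cup H)$ is compact and disjoint from $Q$, so by the same compactness extraction as above there is a clopen $W \ni x$ with $W \subseteq G \cup H$. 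Now $W \cap G = W \setminus H = W \cap (X \setminus H)$ is simultaneously open and closed, hence a clopen neighbourhood of $x$; therefore $Q \subseteq W \cap G \subseteq G$, which forces $B \subseteq Q \cap H \subseteq G \cap H = \emptyset$, a contradiction. Hence $Q$ is connected and $C = Q$.

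I expect the connectedness of the quasi-component to be the only delicate step; its two essential ingredients are normality of compact Hausdorff spaces (to separate $A$ from $B$) and the compactness trick that converts ``$X \setminus (G \cup H)$ misses the intersection of all clopen neighbourhoods of $x$'' into ``a single clopen neighbourhood of $x$ lies inside $G \cup H$'' — precisely the manoeuvre reused for the final neighbourhood base conclusion. Everything else is routine bookkeeping with clopen sets.
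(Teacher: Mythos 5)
Your argument is correct and complete. The paper gives no proof of this lemma---it is quoted from van Mill \cite[A.10.1]{Mill01}---and your route (component equals quasi-component in a compact Hausdorff space, proved via normality plus the finite-intersection compactness extraction, followed by the same extraction to shrink a clopen cover of $X \setminus U$) is exactly the standard argument behind that citation.
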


\begin{mylem}[Boundary Bumping Lemma {\cite[6.1.25]{Eng}}]
\label{boundarybumping}
The closure of every component of a non-empty proper open subset $U$ of a Hausdorff continuum intersects the boundary of $U$, i.e.\ $\closure{C_U(x)} \setminus U \neq \emptyset$ for all $x \in U$. 
\end{mylem}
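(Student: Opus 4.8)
The plan is to argue by contradiction. Suppose some $x \in U$ satisfies $\closure{C_U(x)} \subseteq U$; writing $C = C_U(x)$, I will manufacture a proper non-empty clopen subset of $X$, contradicting connectedness. The one genuinely delicate move is to pass from $X$ to a well-chosen compact subspace in which $C$ becomes a bona fide component while still possessing an $X$-open neighbourhood. Since $\closure{C}$ and $X \setminus U$ are disjoint closed sets (the latter non-empty, as $U \neq X$) and $X$ is normal, being compact Hausdorff, there is an open set $W$ with $\closure{C} \subseteq W \subseteq \closure{W} \subseteq U$.

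Inside the compact Hausdorff space $\closure{W}$ I claim $C$ is exactly the component of $x$. On one hand $C$ is connected, contains $x$, and lies in $W \subseteq \closure W$, so $C \subseteq C_{\closure W}(x)$; on the other hand $C_{\closure W}(x)$ is a connected set containing $x$ which lies in $\closure W \subseteq U$, hence $C_{\closure W}(x) \subseteq C_U(x) = C$. Thus $C = C_{\closure W}(x)$ is a component of $\closure W$, and $W$ is an open subset of $\closure W$ containing it, so the Second \v{S}ura-Bura Lemma (Lemma~\ref{adaptsurabura}) supplies a set $V$ that is clopen in $\closure W$ with $C \subseteq V \subseteq W$.

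It remains to promote $V$ to a clopen subset of $X$. It is closed in $X$ because it is closed in $\closure W$, which is closed in $X$. It is open in $X$ because, writing $V = W' \cap \closure W$ with $W'$ open in $X$, we have $V = V \cap W = W' \cap \closure W \cap W = W' \cap W$, an intersection of two open subsets of $X$. Now $V$ contains $x$, so $V \neq \emptyset$, and $V \subseteq W \subseteq U \subsetneq X$, so $V \neq X$; this contradicts the connectedness of $X$. Hence $\closure{C_U(x)} \setminus U \neq \emptyset$ for every $x \in U$, and since $C_U(x) \subseteq U$ forces $\closure{C_U(x)} \subseteq \closure U$, the point so produced automatically lies in $\bd{U}$, as the statement asserts.

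I expect the main obstacle to be identifying the correct ambient space. Arguing in $X$ itself is hopeless, since a continuum has no proper non-empty clopen subsets, and arguing in $\closure U$ does not obviously render $C$ a component; squeezing $C$ between $W$ and $\closure W \subseteq U$ is precisely the device that simultaneously makes $C$ a component of a compact space (so that \v{S}ura-Bura applies) and keeps the resulting clopen set sandwiched inside an $X$-open set (so that it survives the passage back to $X$). Everything else is routine point-set bookkeeping.
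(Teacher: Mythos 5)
Your proof is correct. The paper does not prove this lemma itself (it is cited from Engelking), but your argument --- trapping $C$ between $W$ and $\closure{W}\subseteq U$ so that $C$ becomes a component of the compact space $\closure{W}$, applying the Second \v{S}ura-Bura Lemma there, and promoting the resulting clopen set to a proper non-empty clopen subset of $X$ --- is the standard proof and every step checks out.
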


A Hausdorff compactification $\gamma X$ of a space $X$ is called an $N$\emph{-point compactification} (for $N \in \N$) if its remainder $\gamma X \setminus X$ has cardinality $N$.  A \emph{finite compactification} of $X$ is an $N$-point compactification for some $N \in \N$. We say $\nu X$ is a \emph{maximal} $N$-point compactification if no other finite compactification $\gamma X$ has a strictly larger remainder, i.e.\ whenever $\cardinality{\gamma X \setminus X}=M$ then $M \leq N$.

For $N \geq 1$, a point $x \in X$ is $N$-\emph{splitting} in $X$ if $X \setminus \singleton{x} = X_1 \oplus \ldots \oplus X_N$ such that $x \in \closure{X_i}$ for all $i \leq N$. Further, we say that $x$ is \emph{locally $N$-splitting} (in $X$) if there exists a neighbourhood $U$ of $ x$, i.e.\ a set $U$ with $x \in \interior{U}$, such that $x$ is $N$-splitting in $U$. Moreover, a point $x$ is \emph{$N$-separating} in $X$ if $X \setminus \singleton{x}$ has a disconnection into $N$ (clopen) sets $ A_1 \oplus \cdots \oplus A_N$ such that all $A_i$ intersect $C_X(x)$. Similarly, we say $x$ is \emph{locally $N$-separating} in $X$ if there is a neighbourhood $U$ of $x$ such that $x$ is $N$-separating in $U$. 

We will also need the following three results from \cite{part1}.
\begin{mylem}[{\cite[3.4]{part1}}]
\label{lem:easytranslation}
A card $X \setminus \singleton{x}$ of a compact Hausdorff space $X$ has an $N$-point compactification if and only if $x$ is locally $N$-splitting in $X$.
\end{mylem}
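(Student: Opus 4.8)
The statement has two directions, and the plan is to handle each by a direct construction, writing $Y := X \setminus \singleton{x}$ throughout; note $Y$ is locally compact and open in $X$, and $\singleton{x}$ is closed since $X$ is Hausdorff.

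For the reverse implication, suppose $x$ is locally $N$-splitting in $X$, witnessed by a neighbourhood $U$ of $x$ with $U \setminus \singleton{x} = X_1 \oplus \cdots \oplus X_N$ and $x \in \closure{X_i}$ for every $i$. Replacing $U$ by $\interior{U}$ and each $X_i$ by $X_i \cap \interior{U}$ (the splitting is preserved, because $\interior{U}$ is itself a neighbourhood of $x$, so every neighbourhood of $x$ still meets each piece) we may assume $U$ is open; then each $X_i$ is open in $X$. The idea is to build an $N$-point compactification of $Y$ by ``splitting $x$ into $N$ copies $p_1,\dots,p_N$'', the point $p_i$ recording how $Y$ accumulates at $x$ through $X_i$. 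Concretely, put $\gamma Y := Y \cup \singleton{p_1,\dots,p_N}$ with $p_1,\dots,p_N$ new points, topologised by declaring $Y$ to be an open subspace carrying its original topology and taking as a neighbourhood base at $p_i$ the sets $\singleton{p_i} \cup (V \cap X_i)$ with $V$ an open neighbourhood of $x$ in $X$. One then checks routinely that $\gamma Y$ is compact Hausdorff with $Y$ dense and correctly embedded: Hausdorffness at $p_i,p_j$ comes from $X_i \cap X_j = \emptyset$ and at $p_i,y$ from separating $y$ from $x$ in $X$; density and the subspace condition follow from $x \in \closure{X_i}$; and for compactness, any open cover has finitely many members covering $\singleton{p_1,\dots,p_N}$ together with a set $V \setminus \singleton{x}$ (for some open $V \ni x$), after which the compact set $X \setminus V \subseteq Y$ is covered by finitely many more. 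Since $\gamma Y \setminus Y = \singleton{p_1,\dots,p_N}$, $Y$ has an $N$-point compactification.

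For the forward implication, let $\gamma Y$ be an $N$-point compactification of $Y$ with remainder $\singleton{p_1,\dots,p_N}$. A finite subset of a Hausdorff space is closed and discrete, so normality of the compact Hausdorff space $\gamma Y$ gives pairwise disjoint open sets $V_1,\dots,V_N$ in $\gamma Y$ with $p_i \in V_i$. Put $O_i := V_i \cap Y$: these are pairwise disjoint, nonempty (as $Y$ is dense) open subsets of $Y$, hence of $X$, and $p_i \in \closure{O_i}^{\gamma Y}$ since $O_i$ is dense in $V_i$. Let $\function{q}{\gamma Y}{X}$ be the identity on $Y$ with $q(p_i) = x$ for all $i$; checking preimages of basic open sets shows $q$ is continuous, and its only non-singleton fibre is $\singleton{p_1,\dots,p_N}$. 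Applying $q$ to $p_i \in \closure{O_i}^{\gamma Y}$ yields $x = q(p_i) \in \closure{O_i}^{X}$ for each $i$. Finally $V_1 \cup \cdots \cup V_N$ is open and $q$-saturated, so (as $q$ is a quotient map) $U_0 := q\!\left[V_1 \cup \cdots \cup V_N\right]$ is an open neighbourhood of $x$ with $U_0 \setminus \singleton{x} = O_1 \oplus \cdots \oplus O_N$; combined with $x \in \closure{O_i}$ this says $x$ is $N$-splitting in $U_0$, i.e.\ locally $N$-splitting in $X$.

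The substantive content is modest. The main obstacle is not conceptual but bookkeeping: in the reverse direction one must carefully verify that the hand-built topology on $\gamma Y$ is compact Hausdorff and induces the original topology on $Y$, and in the forward direction one must track saturation and continuity of the collapsing map $q$. The only case needing separate mention is $x$ isolated in $X$: then $Y$ is clopen and compact, so has no $N$-point compactification for $N \geq 1$, while $\singleton{x}$ being clopen prevents $x$ from being locally $N$-splitting, so both sides of the equivalence fail.
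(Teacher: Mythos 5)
Your proof is correct. Note that this paper does not prove the lemma at all---it is quoted as Lemma 3.4 of the companion paper \cite{part1}---so there is no in-text argument to compare against; but your two constructions (splitting $x$ into $N$ new points $p_1,\dots,p_N$, one for each clopen piece $X_i$, to build the compactification, and collapsing the $N$-point remainder onto $x$ via a closed quotient map to recover the local splitting) are exactly the canonical argument one expects here, and the isolated-point degenerate case is handled correctly.
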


\begin{mylem}[{\cite[4.8]{part1}}]
\label{fintranslation}
A card $X \setminus \singleton{x}$ of a Hausdorff continuum $X$ has an $N$-point compactification if and only if $x$ is locally $N$-separating in $X$.
\end{mylem}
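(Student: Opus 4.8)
The plan is to derive the equivalence from its \lq\lq splitting\rq\rq\ analogue, Lemma~\ref{lem:easytranslation}, by showing that for a \emph{continuum} the properties \lq\lq locally $N$-splitting\rq\rq\ and \lq\lq locally $N$-separating\rq\rq\ of a point coincide; I may assume $X$ is non-degenerate, since otherwise it has no cards. For the direction \lq\lq$x$ locally $N$-separating $\Rightarrow$ card has an $N$-point compactification\rq\rq\ it suffices (by Lemma~\ref{lem:easytranslation}) to check that $x$ is then locally $N$-splitting: if $x$ is $N$-separating in a neighbourhood $U$, say $U\setminus\{x\}=A_1\oplus\dots\oplus A_N$ with each $A_i$ meeting $C_U(x)$, and if $x\notin\overline{A_i}$ for some $i$, then $A_i$ is clopen in $U$, so $A_i\cap C_U(x)$ is a non-empty, and (as $C_U(x)$ also meets some $A_j$, $j\ne i$) proper, clopen subset of the connected set $C_U(x)$, a contradiction. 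Hence $x\in\overline{A_i}$ for all $i$, i.e.\ $x$ is $N$-splitting in $U$. (For $N=1$ this is trivial, a non-degenerate continuum having no isolated point.)

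The substantial direction is the converse, which I would prove by induction on $N$, working directly with a compactification rather than with Lemma~\ref{lem:easytranslation}. Let $\gamma$ be an $N$-point compactification of the (locally compact, non-compact) space $Y:=X\setminus\{x\}$, with remainder $P=\{p_1,\dots,p_N\}$. Since a Hausdorff compactification with a one-point remainder must be \emph{the} one-point compactification, $X$ is the one-point compactification of $Y$, and as the latter is dominated by every compactification the identity on $Y$ extends to a continuous surjection $q\colon\gamma\to X$ with $q^{-1}(x)=P$ and $q$ a homeomorphism on $Y$. A short argument shows that every non-empty proper clopen subset of $\gamma$ meets $P$: if $\gamma=\Gamma\oplus\Gamma'$ then the closed sets $q(\Gamma),q(\Gamma')$ cover the connected space $X$, hence meet in a point whose $q$-fibre is not a singleton, i.e.\ in $x$. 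Consequently $\gamma$ has at most $N$ components; and since $X$ has no isolated point, each component is a non-degenerate continuum.

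If $\gamma$ is connected it is a continuum: choose pairwise disjoint open $O_j\ni p_j$ with $\overline{\bigcup_j O_j}\ne\gamma$, let $D_j$ be the component of $p_j$ in $O_j$, and set $U:=q\bigl(\bigcup_j O_j\bigr)$ and $A_j:=O_j\cap Y$. Then $U$ is an open neighbourhood of $x$ in $X$ and the $A_j$ form a clopen partition of $U\setminus\{x\}$; by the Boundary Bumping Lemma each $D_j$ is non-degenerate, so $q(D_j)$ is a connected subset of $U$ containing $x$ and meeting $A_j$, whence $C_U(x)$ meets every $A_j$ and $x$ is $N$-separating in $U$. If instead $\gamma$ has components $\Gamma_1,\dots,\Gamma_m$ with $2\le m\le N$, they are clopen and partition $P$ into non-empty sets of sizes $N_1,\dots,N_m$ summing to $N$ (each $N_i<N$), and each $q(\Gamma_i)=:X^{(i)}$ is a subcontinuum of $X$ through $x$ whose card $X^{(i)}\setminus\{x\}=\Gamma_i\cap Y$ has the $N_i$-point compactification $\Gamma_i$; by the inductive hypothesis $x$ is locally $N_i$-separating in $X^{(i)}$, and, since the sets $X^{(i)}\setminus\{x\}$ are pairwise disjoint and clopen in $X\setminus\{x\}$, the witnessing neighbourhoods glue to a neighbourhood of $x$ in $X$ in which $x$ is $N$-separating. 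The base case $N=1$ is immediate: a neighbourhood $U$ of $x$ with $\overline U\ne X$ is a proper open subset of the continuum $X$, so $C_U(x)\ne\{x\}$ by Boundary Bumping, while a card of a non-degenerate continuum always has a one-point compactification.

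The crux is the connected case. The tempting but doomed approach is to look for a connected subset of $X$ through $x$ meeting a prescribed piece $X_i$ of a local splitting inside $X$ itself; a component of $X_i$ can fail to approach $x$, running out instead to the boundary of the chosen neighbourhood --- exactly what happens in the Cantor set, where Boundary Bumping is unavailable. Passing to the compactification $\gamma$ cures this: the \lq\lq missing\rq\rq\ approach points are concentrated in the finite set $P$, the Boundary Bumping Lemma applies to the continuum $\gamma$ and forces the component of each $p_j$ to be non-degenerate, and projecting that component by $q$ yields precisely the connected set through $x$ reaching into $A_j$ that is required. The disconnected case is then routine bookkeeping to run the induction.
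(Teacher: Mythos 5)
Your proof is correct. Note that the paper itself gives no proof of Lemma~\ref{fintranslation} --- it is quoted from \cite[4.8]{part1} --- so the closest argument in the text is that of Lemma~\ref{boundedaway}, which extends the statement to compact \emph{metric} spaces whose components all have diameter at least $\delta$ and which works entirely inside $X$: starting from the splitting neighbourhood supplied by Lemma~\ref{lem:easytranslation}, it rules out $C_U(x)\cap U_1=\emptyset$ by a metric estimate (Boundary Bumping applied to $C_X(x_n)\cap U_1$ forces the components of $U_1$ near $x$ to have diameter bounded below, contradicting \v{S}ura-Bura applied to the one-point component $\{x\}$ of $\closure{U_1}$). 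You instead lift the problem to the $N$-point compactification $\gamma$ itself: every proper non-empty clopen subset of $\gamma$ must meet the remainder $P$ (so $\gamma$ has at most $N$ components), Boundary Bumping in the continuum $\gamma$ (or in its clopen components, handled by induction on $N$) makes the component of each $p_j$ in a small $O_j$ non-degenerate, and pushing these components down via the canonical quotient $q\colon\gamma\to\alpha Y=X$ yields exactly the connected sets through $x$ witnessing $N$-separation. This is a genuinely different and metric-free route, and it is well suited to the lemma as stated, which concerns arbitrary Hausdorff continua; its price is that it exploits connectedness of $X$ essentially (through the at-most-$N$-components count for $\gamma$) and so does not recover the bounded-component generalisation of Lemma~\ref{boundedaway}. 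The details all check: the easy direction correctly upgrades ``separating'' to ``splitting'' via connectedness of $C_U(x)$; the component count uses that components and quasicomponents coincide in compact Hausdorff spaces; in the disconnected case the finitely many components are clopen, each $q(\Gamma_i)$ is a non-degenerate subcontinuum whose card has an $N_i$-point compactification with $N_i<N$, and the local witnesses glue because $U\cap X^{(i)}=U_i$ and $C_{U_i}(x)\subseteq C_U(x)$. (One cosmetic remark: the non-degeneracy of the components of $\gamma$ follows from their being proper clopen sets meeting both the dense set $Y$ and $P$, rather than from $X$ having no isolated points as you state; nothing downstream is affected.)
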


\begin{mythm}[{\cite[4.11]{part1}}]
\label{bigreconstructionresult2}
The number of locally $3$-separating points in a $T_1$ space $X$ does not exceed the weight of $X$. 
\end{mythm}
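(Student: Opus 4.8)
The plan is a counting argument relative to a base of $X$. Fix a base $\mathcal{B}$ with $\card{\mathcal{B}} = \weight{X} =: \kappa$, attach to each locally $3$-separating point a tuple of four members of $\mathcal{B}$, and show the resulting map into $\mathcal{B}^{4}$ is injective; since $\card{\mathcal{B}^{4}} = \kappa$, this gives the bound. The number \emph{three} has to be used essentially, because for $N = 2$ the statement fails: every interior point of $[0,1]$ is locally $2$-separating, while $\weight{[0,1]} = \aleph_{0}$. So the argument must extract from the third piece something with no $2$-piece analogue.

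First I would record the local structure at a locally $3$-separating point $x$. Fix a witnessing neighbourhood $U_{x}$ with $U_{x} \setminus \singleton{x} = A_{1}^{x} \oplus A_{2}^{x} \oplus A_{3}^{x}$, each $A_{i}^{x}$ clopen in $U_{x} \setminus \singleton{x}$ and meeting $D_{x} := C_{U_{x}}(x)$. Since $D_{x}$ is connected, meets $A_{i}^{x}$, and is not contained in it, $A_{i}^{x}$ is not clopen in $U_{x}$, so $x \in \closure{A_{i}^{x}}$; consequently $D_{x} \setminus \singleton{x} = (D_{x} \cap A_{1}^{x}) \oplus (D_{x} \cap A_{2}^{x}) \oplus (D_{x} \cap A_{3}^{x})$, and a short argument using connectedness of $D_{x}$ shows each $F_{i}^{x} := (D_{x} \cap A_{i}^{x}) \cup \singleton{x}$ is connected with $F_{i}^{x} \cap F_{j}^{x} = \singleton{x}$ for $i \neq j$. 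Thus $D_{x}$ is a \lq triod\rq\ --- three connected legs joined only at $x$, leg $F_{i}^{x}$ contained in $A_{i}^{x} \cup \singleton{x}$ --- and in particular $x \in \closure{D_{x} \cap A_{i}^{x}}$. Hence $\interior{U_{x}} \cap A_{i}^{x}$, which is open in $X$, meets $D_{x}$, say in $p_{i}^{x}$; fix basic sets $B_{0}^{x} \ni x$ with $B_{0}^{x} \subseteq \interior{U_{x}}$ and, for $i = 1, 2, 3$, basic $B_{i}^{x}$ with $p_{i}^{x} \in B_{i}^{x} \subseteq \interior{U_{x}} \cap A_{i}^{x}$, and let $\p{B_{0}^{x}, B_{1}^{x}, B_{2}^{x}, B_{3}^{x}}$ be the tuple of $x$.

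Now suppose two distinct locally $3$-separating points $x$ and $y$ received the same tuple $\p{B_{0}, B_{1}, B_{2}, B_{3}}$. Then $x, y \in B_{0} \subseteq U_{x} \cap U_{y}$, each $B_{i}$ meets both $D_{x}$ and $D_{y}$, and $B_{i} \subseteq A_{i}^{x} \cap A_{i}^{y}$. Since $y \in U_{x} \setminus \singleton{x}$, it lies in a unique piece $A_{k}^{x}$; since $x \in U_{y} \setminus \singleton{y}$, it lies in a unique piece $A_{l}^{y}$. Pick $i \in \singleton{1, 2, 3} \setminus \singleton{k, l}$ --- possible because there are three pieces and $\card{\singleton{k, l}} < 3$. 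The leg $F_{i}^{y}$ is connected, contains $y$ (hence a point of $A_{k}^{x}$) and $p_{i}^{y} \in B_{i} \subseteq A_{i}^{x}$ (a point of $A_{i}^{x}$, with $i \neq k$), and omits $x$ (as $F_{i}^{y} \subseteq A_{i}^{y} \cup \singleton{y}$ while $x \in A_{l}^{y}$ with $l \neq i$). So if $F_{i}^{y} \subseteq U_{x}$, then $F_{i}^{y}$ is a connected subset of $U_{x} \setminus \singleton{x} = A_{1}^{x} \oplus A_{2}^{x} \oplus A_{3}^{x}$ meeting two of the disjoint clopen pieces --- impossible; and symmetrically $F_{i}^{x} \subseteq U_{y}$ is impossible. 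Everything therefore reduces to forcing one of the containments $F_{i}^{y} \subseteq U_{x}$ or $F_{i}^{x} \subseteq U_{y}$, or a usable substitute for it.

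That last step is the genuine obstacle, and the reason the proof is not a one-line pigeonhole: the triod at $x$ need not lie inside any prescribed neighbourhood of $x$, because shrinking $U_{x}$ can destroy the \lq meets the component\rq\ condition, so the legs are genuinely non-local and the naive encoding does not pin them down. I would attack this either by being far more careful with the choice of witnesses (for instance using a basic neighbourhood as witness whenever one works, and treating the residual points by a separate argument about the relatively clopen pieces alone), or by starting from a putative family of size $\kappa^{+}$, refining it by a $\Delta$-system argument on the tuples, and extracting from the resulting rigid overlap pattern enough points whose triods are forced into the impossible configuration above. Everything else --- the closures of the clopen pieces, the triod decomposition, and the counting over $\mathcal{B}^{4}$ --- is routine.
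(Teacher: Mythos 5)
This theorem is imported from \cite[4.11]{part1} and is not proved in the present paper, so your proposal has to stand on its own. The preparatory work is correct: each $A_i^x$ is open in $U_x$ and clusters at $x$ (else $D_x\cap A_i^x$ would be a proper non-empty clopen subset of the connected set $D_x$), the three legs $F_i^x$ are connected and meet pairwise only in $x$, the points $p_i^x$ exist, and the ``choose $i\notin\{k,l\}$'' pigeonhole is exactly where three pieces are needed rather than two (your remark that the $N=2$ analogue fails for $[0,1]$ is apt). The shape of the argument --- a tuple of basic open sets plus a connectedness contradiction --- is the right one.

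But, as you say yourself, the proof is not finished: injectivity of the tuple map is never established, and that is the entire content of the theorem. The contradiction you want needs a connected set that lies \emph{inside} $U_x\setminus\singleton{x}$ (so that the decomposition $A_1^x\oplus A_2^x\oplus A_3^x$ is actually a disconnection of it) while meeting two distinct pieces; the only candidates your encoding produces are legs $F_i^y\subseteq U_y$ (or symmetrically $F_i^x\subseteq U_x$), which live in the wrong neighbourhood. Nothing recorded in the tuple $\p{B_0,B_1,B_2,B_3}$ controls the legs outside $B_0$, and the witnessing neighbourhoods cannot be shrunk to basic ones because passing to a smaller $V\ni x$ can shrink $C_V(x)$ and destroy the condition that every piece meets it --- this is precisely the non-locality you identify. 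Neither proposed repair closes the gap as written: ``being more careful with the witnesses'' is the open problem restated, and a $\Delta$-system or pigeonhole refinement of the tuples only reproduces what you already have (arbitrarily many points sharing one tuple) without confining any leg to the other point's neighbourhood. Until a device is supplied that produces a connected set joining two pieces of $x$'s decomposition \emph{within} $U_x\setminus\singleton{x}$ from the data shared by $x$ and $y$, the injectivity claim, and hence the theorem, remains unproved.
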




Now we generalize the second of the above results beyond continua.
\begin{mylem}
\label{boundedaway}
Let $X$ be a compact metric space and suppose there exists $\delta >0$ such that the diameter of every component is at least $\delta$. Then $X \setminus \singleton{x}$ has an $N$-point compactification if and only if $x$ is locally $N$-separating in $X$.
\end{mylem}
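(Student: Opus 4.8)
The plan is to deduce the equivalence from Lemma~\ref{lem:easytranslation} by showing that, for $X$ as in the hypothesis, a point is locally $N$-splitting if and only if it is locally $N$-separating. One direction requires no hypothesis on $X$ and settles the ``if'' part: if $x$ is $N$-separating in a neighbourhood $U$, witnessed by $U\setminus\{x\}=A_1\oplus\dots\oplus A_N$ with each $A_i$ clopen in $U\setminus\{x\}$ and meeting $C_U(x)$, then each $A_i$ is open in $U$, and were $x\notin\overline{A_i}$ the set $A_i$ would be clopen in $U$, forcing the connected set $C_U(x)$ — which meets $A_i$ but contains $x\notin A_i$ — to lie inside $A_i$, a contradiction. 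Hence $x\in\overline{A_i}$ for all $i$, so $x$ is $N$-splitting in $U$, and Lemma~\ref{lem:easytranslation} supplies an $N$-point compactification of $X\setminus\{x\}$.

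For the converse, assume $X\setminus\{x\}$ has an $N$-point compactification. By Lemma~\ref{lem:easytranslation}, $x$ is locally $N$-splitting, and, replacing a witnessing neighbourhood by its intersection with a small open ball about $x$ (which keeps $x$ in the closure of each piece, since the smaller set is still a neighbourhood of $x$), we may fix an open $U\ni x$ with $\diam{\overline{U}}<\delta$ and a decomposition $U\setminus\{x\}=U_1\oplus\dots\oplus U_N$ into non-empty sets clopen in $U\setminus\{x\}$ with $x\in\overline{U_i}$ for every $i$. Since each $U_i$ is relatively closed, $\overline{U_i}\cap U=U_i\cup\{x\}$. It now suffices to prove $C_U(x)\cap U_i\neq\emptyset$ for every $i$; the displayed decomposition then exhibits $x$ as $N$-separating in $U$, hence locally $N$-separating in $X$.

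So fix $i$. The idea is to produce a non-degenerate subcontinuum of $\overline{U}$ containing $x$ and lying in $\overline{U_i}$: intersecting such a continuum with $U$ and taking the component of $x$ yields a connected subset of $C_U(x)$ which, by $\overline{U_i}\cap U=U_i\cup\{x\}$, still meets $U_i$ as long as it is non-degenerate — which is where the Boundary Bumping Lemma~\ref{boundarybumping} enters. The obvious candidate is $L_i:=C_{\overline{U_i}}(x)$, a subcontinuum of $\overline{U}$. If $L_i\neq\{x\}$ we finish quickly: when $L_i\subseteq U$, $L_i$ itself lies in $C_U(x)$ and meets $U_i$; otherwise $L_i\cap U$ is a non-empty proper open subset of the continuum $L_i$, so Lemma~\ref{boundarybumping} shows the component of $x$ in $L_i\cap U$ reaches $\overline{U}\setminus U$, hence is non-degenerate.

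The crux — and, I expect, the main obstacle — is the case $L_i=\{x\}$; it is also the only place the uniform bound $\delta$ is genuinely used. Put $\rho:=d(x,\overline{U}\setminus U)>0$ and fix $\epsilon<\rho/3$. By the Second \v{S}ura-Bura Lemma~\ref{adaptsurabura} there is a set $V$ clopen in $\overline{U_i}$ with $x\in V\subseteq B(x,\epsilon)$, and since $x\in\overline{U_i}$ we may pick $z\in V\cap U_i$. The component $C_X(z)$ has diameter at least $\delta>\diam{\overline{U}}$, so $C_X(z)\not\subseteq U$, and Lemma~\ref{boundarybumping} applied to $C_X(z)$ and $C_X(z)\cap U$ gives that the closure $\overline{Q}$ of the component $Q$ of $z$ in $C_X(z)\cap U$ meets $\overline{U}\setminus U$; thus $\diam{\overline{Q}}\geq d(z,\overline{U}\setminus U)\geq\rho-\epsilon$. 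If $x\in Q$, then $Q$ is a connected subset of $U$ containing $x$ and $z\in U_i$, so $C_U(x)\cap U_i\neq\emptyset$. If $x\notin Q$, then $Q\subseteq U\setminus\{x\}$ is connected and meets $U_i$, so $Q\subseteq U_i$ and $\overline{Q}\subseteq\overline{U_i}$; being connected and meeting the clopen-in-$\overline{U_i}$ set $V$, it lies in $V\subseteq B(x,\epsilon)$, whence $\diam{\overline{Q}}\leq 2\epsilon<\rho-\epsilon$, a contradiction. Hence $x\in Q$ and $C_U(x)\cap U_i\neq\emptyset$. As $i$ was arbitrary, $x$ is $N$-separating in $U$, completing the proof. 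The role of $\delta$ is precisely to forbid $U_i$ from accumulating at $x$ only along a sequence of ever-smaller components — the obstruction which, without such a bound, lets locally $N$-splitting points fail to be locally $N$-separating, as in the Cantor set.
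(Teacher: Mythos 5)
Your proof is correct and takes essentially the same route as the paper's: both reduce via Lemma~\ref{lem:easytranslation} to showing that $C_U(x)$ meets each piece $U_i$ of a splitting of a neighbourhood of diameter less than $\delta$, and both obtain this by playing the Boundary Bumping Lemma~\ref{boundarybumping} (which, thanks to the uniform lower bound $\delta$ on component diameters, forces the component of a point of $U_i$ near $x$ to reach the boundary of $U$) against the Second \v{S}ura-Bura Lemma~\ref{adaptsurabura} (which traps that component in an arbitrarily small clopen subset of $\closure{U_i}$ when $x$ is a one-point component of $\closure{U_i}$). The differences --- a single well-chosen point $z$ with a case split on $C_{\closure{U_i}}(x)$ in place of the paper's sequence $x_n \to x$ and direct contradiction, and the unstated but immediate fact that $\closure{U}\setminus U \neq \emptyset$ --- are purely organizational.
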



\begin{proof}
The backwards direction is immediate from Lemma~\ref{lem:easytranslation}, so we focus on the direct implication. Let us assume that $X\setminus \singleton{x}$ has an N-point compactification. By Lemma~\ref{lem:easytranslation}, there is a compact neighbourhood $U=\closure{B_\epsilon(x)}$ of $x$ such that $U \setminus \singleton{x} = U_1 \oplus \cdots \oplus U_N$ and $x \in \closureIn{U_i}{X}$ for all $i$. Assume without loss of generality that $\epsilon < \delta/2$. We have to show that $C_{U}(x)$, the component of $x$ in $U$, intersects every $U_i$. Suppose for a contradiction this is not the case, i.e.\ that say $C(x) \cap U_1 = \emptyset$. Since $x$ lies in the closure of $U_1$, we can find a sequence $\set{x_n}:{n\in\w} \subset U_1$ converging to $x$.

Note that if $x \in \closure{C_{U_1}(x_n)}$ then $C_{U_1}(x_n) \subset C_U(x)$, a contradiction. Next, since $\diam{C_X(x_n)} > 2 \epsilon$, we have $C_X(x_n) \not\subseteq U$. Hence, applying Lemma~\ref{boundarybumping} to $C_X(x_n) \cap U_1$ yields that $C_{U_1}(x_n)$ limits onto the boundary of $U$. Hence, whenever $d(x,x_n) < \epsilon/2$ then $\textnormal{diam}(C_{U_1}(x_n)) \geq\epsilon/2$.

To conclude the proof, note that $C_U(x) \cap U_1 = \emptyset$ implies that $x$ is a one-point component of $\closure{U_1}$. By Lemma~\ref{adaptsurabura} there is a clopen neighbourhood $V$ of $x$ in $\closure{U_1}$ such that the diameter of $V$ is at most $\epsilon/4$. However, we have $x_n \in V$ for $n$ large enough, implying that $\textnormal{diam}(C_{U_1}(x_n)) \leq \textnormal{diam}(V) \leq \epsilon/4$, a contradiction.
\end{proof}

Our characterisation of compact metrizable spaces having a card with a maximal finite compactification follows.

\begin{mythm}
\label{charfinitecompactificationcptmetr}
For a compact metrizable space $X$ without isolated points the following are equivalent:
\begin{enumerate}
\itemsep0em 
\item $X$ has a card with a maximal $1$- or $2$-point compactification,  
\item $X$ has a card with a maximal finite compactification,
\item $C_1(X)$, the set of $1$-point components of $X$, is not dense in $X$, and
\item $C_1(X)$ does not form a dense $G_\delta$ of size $\cont$.
\end{enumerate}
\end{mythm}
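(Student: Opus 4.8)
The plan is to prove the cycle of implications $(1)\Rightarrow(2)\Rightarrow(4)\Rightarrow(3)\Rightarrow(1)$, or some convenient reordering; since $(1)\Rightarrow(2)$ is trivial and $(4)\Rightarrow(3)$ needs a remark, the real content lies in $(2)\Rightarrow(3)$ (equivalently its contrapositive) and in $(3)\Rightarrow(1)$.

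For $(2)\Rightarrow(3)$, I would argue the contrapositive: if $C_1(X)$ is dense in $X$, then $X$ has no card with a maximal finite compactification, i.e.\ for every $x\in X$ and every $N$ there is an $(N+1)$-point compactification of some card $X\setminus\{y\}$ — in fact one can try to show that for a fixed $x$, the card $X\setminus\{x\}$ itself fails to have a maximal finite compactification, or more robustly that \emph{some} card has arbitrarily large finite compactifications. By Lemma~\ref{lem:easytranslation} this amounts to finding, for each $N$, a point that is locally $N$-splitting. The idea: density of $C_1(X)$ together with absence of isolated points and the Second \v{S}ura-Bura Lemma lets one find, inside any prescribed open set, a clopen subset that itself splits off; iterating, one finds a point $x$ with arbitrarily many pairwise disjoint clopen sets accumulating at it whose union is a neighbourhood of $x$ — making $x$ locally $N$-splitting for all $N$. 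The slightly delicate point is ensuring the pieces are genuinely clopen \emph{in $X$} and that $x$ lies in the closure of each; here one uses that $1$-point components have clopen neighbourhood bases and that near a point of $C_1(X)$ one can peel off small clopen sets indefinitely without exhausting the neighbourhood.

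For $(3)\Rightarrow(1)$, suppose $C_1(X)$ is not dense, so there is a nonempty open $U$ with $\closure{U}\cap C_1(X)=\emptyset$; shrinking via \v{S}ura-Bura, one gets a nonempty clopen set $K\subseteq X$ in which \emph{every} component has more than one point, hence (using compactness and metrizability) in which the components have diameter bounded below by some $\delta>0$ — this is the standard fact that in a compact metric space with no $1$-point components, the infimum of component diameters is positive, proved by a limit argument (if $C_n$ are components with $\diam(C_n)\to 0$, an accumulation point of points chosen from them lies in a $1$-point component). Now Lemma~\ref{boundedaway} applies to $K$: a card $K\setminus\{x\}$ has an $N$-point compactification iff $x$ is locally $N$-separating in $K$, and by Theorem~\ref{bigreconstructionresult2} only weight-many (hence in a separable metric space, at most countably... actually $\le\mathfrak c$, but crucially \emph{not all}) points of $K$ are locally $3$-separating. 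So pick $x\in K$ that is \emph{not} locally $3$-separating; then every finite compactification of $K\setminus\{x\}$ has remainder of size $\le 2$, and since $K$ is clopen in $X$ the finite compactifications of $K\setminus\{x\}$ and of $X\setminus\{x\}$ differ only by the fixed finite piece $X\setminus K$ attached trivially, so $X\setminus\{x\}$ has a maximal finite compactification, with maximal remainder of size $1$ or $2$. This gives $(1)$.

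Finally $(4)\Rightarrow(3)$ is a short remark: if $C_1(X)$ is dense then, being a $G_\delta$ in the compact metric (hence Polish) space $X$ — it is $G_\delta$ because it equals $\bigcap_n\{x: x$ has a clopen neighbourhood of diameter $<1/n\}$, each such set open — it is a dense $G_\delta$, and a dense $G_\delta$ in a perfect Polish space is Polish and dense-in-itself, hence of size $\mathfrak c$; so $C_1(X)$ is a dense $G_\delta$ of size $\cont$, contradicting $(4)$; thus $\neg(3)\Rightarrow\neg(4)$. The implication $(3)\Rightarrow(2)$ follows from $(3)\Rightarrow(1)$, and $(2)\Rightarrow(4)$ is the contrapositive of $(4)\Rightarrow(3)$ composed with $(2)\Rightarrow(3)$; assembling these closes the equivalences. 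I expect the main obstacle to be the bookkeeping in $(2)\Rightarrow(3)$: extracting, purely from density of $C_1(X)$, a single point with arbitrarily large families of disjoint clopen neighbourhoods accumulating to it, while keeping everything clopen in $X$ and controlling that the point stays in the closure of each piece.
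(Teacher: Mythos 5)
Your reduction to the two substantive implications is sound, and your $(2)\Rightarrow(3)$ is essentially the paper's argument: peel off small non-empty clopen sets $F_n$ inside an annular neighbourhood base of an arbitrary point $x$ (possible by density of $C_1(X)$ plus the Second \v{S}ura-Bura Lemma) and group them into $N$ pieces to make $x$ locally $N$-splitting for every $N$. The trivial implications and your $G_\delta$/cardinality remark settling $(3)\Leftrightarrow(4)$ are also fine.

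The gap is in $(3)\Rightarrow(1)$, in two places. First, when $C_1(X)$ is not dense you cannot in general ``shrink via \v{S}ura-Bura'' to a non-empty \emph{clopen} $K$ missing $C_1(X)$: \v{S}ura-Bura produces clopen neighbourhoods of \emph{components}, and an open set disjoint from $C_1(X)$ need not contain any component. Concretely, let $X$ be an arc together with a sequence of pairwise disjoint Cantor sets of vanishing diameter converging to one endpoint of the arc; then $C_1(X)$ is not dense, yet every non-empty clopen subset of $X$ either contains that endpoint (hence a tail of the Cantor sets) or is a compact union of $1$-point components, so no such $K$ exists. Second, the ``standard fact'' that a compact metric space with no degenerate components has component diameters bounded below is false: take arcs of length $1/n$ converging onto an arc of length $1$; all components are non-degenerate but their diameters tend to $0$. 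Your limit argument only shows that the sets $\set{x}:{\diam{C(x)}\ge\delta}$ are closed (upper semicontinuity of $x\mapsto\diam{C(x)}$), which does not prevent the limit point from lying in a large component. The paper sidesteps both issues by proving $\neg(1)\Rightarrow\neg(4)$ directly: it sets $X_n=\set{x}:{\diam{C(x)}\ge 1/n}$ --- each is a closed union of components, so Lemma~\ref{boundedaway} applies to it with $\delta=1/n$ by construction --- and shows each $X_n$ has empty interior, since otherwise $\interior{X_n}$ would be an uncountable set of locally $3$-separating points of $X_n$, contradicting Theorem~\ref{bigreconstructionresult2}; then $C_1(X)=\bigcap_n(X\setminus X_n)$ is a dense $G_\delta$ of size $\cont$ by Baire. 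Your strategy can be repaired along these lines (if $C_1(X)$ misses an open set $U$, Baire category inside $\closure{U}$ forces some $X_n$ to have non-empty interior, and a point of $\interior{X_n}$ that is not locally $3$-separating yields the desired card), but the transfer must go through the interior of the closed set $X_n$ rather than through a clopen complement.
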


\begin{proof}
The implications $(1) \Rightarrow (2)$ and $(3) \Rightarrow (4)$ are trivial. 

For $(2) \Rightarrow (3)$, assume $C_1(X)$ is dense in $X$. Let $x$ be an arbitrary point of $X$. We need to show that the card $X \setminus \singleton{x}$ has arbitrarily large finite compactifications.

Fix a nested neighbourhood base $\set{V_n}:{n \in \N}$ of $x$ such that $\closure{V_{n+1}} \subsetneq V_n$. Since the 1-point components are dense, it follows from Lemma~\ref{adaptsurabura} that for all $n$ there is a non-empty clopen set $F_n \subsetneq V_n \setminus \closure{V_{n+1}}$. It follows that whenever $A_1, \ldots, A_{N-1}$ partitions $\N$ into infinite subsets, the sets
$$ G_i = \bigcup_{n \in A_i} F_n \quad \text{and} \quad G_0 = (X \setminus \singleton{x}) \setminus  \bigcup_{n \in \N} F_n$$ 
form a partition of $X \setminus \singleton{x}$ into $N$ non-empty non-compact clopen subsets. By Lemma~\ref{lem:easytranslation}, the card $X \setminus \singleton{x}$ has an $N$-point compactification. Since $N$ was arbitrary, the card $X \setminus \singleton{x}$ does not have a maximal finite compactification.

For $(4) \Rightarrow (1)$, we will prove the contrapositive. Assume that all cards of $X$ have a three-point compactification. Set, for $n \in \N$, $X_n = \set{x \in X}:{\diam{C(x)} \geq 1/n}$.

Every $X_n$ is a closed subset of $X$. To see this, suppose for a contradiction that $x$ lies in the closure of $X_n$ and $\diam{C(x)} < 1/n$. Find $\epsilon > 0$ such that $\diam{C(x)} + 2\epsilon < 1/n$. By the Lemma~\ref{adaptsurabura}, there is a clopen set $F$ of $X$ such that  
$C(x) \subseteq F \subseteq B_\epsilon(C(x))$. 
Since $F$ is a neighbourhood of $x$, there is a point $y \in F \cap X_n$ witnessing that $x$ lies in the closure of $X_n$. It follows $y \in C(y) \subseteq F$, and hence $\diam{C(y)}< 1/n$, contradicting $y \in X_n$. This shows that $X_n$ is closed.

We now argue that every $X_n$ has empty interior in $X$. Otherwise, $\interior{X_n}$ would have to be uncountable, being locally compact without isolated points.  As $X \setminus \singleton{x}$ has a $3$-point compactification for all $x$, Lemma~\ref{lem:easytranslation} implies that $X_n \setminus \singleton{x}$ has a $3$-point compactification for all $x \in \interior{X_n}$. However, since all components of $X_n$ have diameter at least $1/n$, Lemma~\ref{boundedaway} implies that all $x \in \interior{X_n}$ are locally $3$-separating in $X_n$. But as compact metrizable spaces have countable weight, no such space can contain uncountably many locally $3$-separating points by Lemma~\ref{bigreconstructionresult2}, a contradiction.  Thus, every $X_n$ has empty interior in $X$.

Finally, $C_1(X)=\bigcap_{n \in \N} X\setminus X_n$. Since every $X\setminus X_n$ is open dense, the Baire Category Theorem  implies that $C_1(X)$ is a dense $G_\delta$ in $X$. In particular, $C_1(X)$ is completely metrizable  without isolated points, and thus has cardinality $\cont$. 
\end{proof}

Since every compact Hausdorff space with a card with a maximal finite compactification is reconstructible, \cite[3.11]{part1}, we deduce:
\begin{mythm}
\label{lotsofreconstrutible}
Every compact metrizable space in which the union of all $1$-point components of $X$ does not form a dense $G_\delta$ of cardinality $\cont$ is reconstructible. \qed
\end{mythm}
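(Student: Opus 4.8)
The plan is to obtain this statement as a direct corollary of Theorem~\ref{charfinitecompactificationcptmetr} together with the reconstruction criterion \cite[3.11]{part1}, after first separating off the case of a space with an isolated point.

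So I would begin by disposing of the easy case: if $X$ has an isolated point, then $X$ is reconstructible by the known result on compact spaces containing an isolated point, \cite{recpaper}, and there is nothing more to do. Hence I may assume for the remainder that $X$ has no isolated points, which is exactly the standing hypothesis of Theorem~\ref{charfinitecompactificationcptmetr}. Now observe that the hypothesis of the present theorem --- that the union of all $1$-point components of $X$, i.e.\ $C_1(X)$, does not form a dense $G_\delta$ of cardinality $\cont$ --- is literally clause~(4) of Theorem~\ref{charfinitecompactificationcptmetr}. By the equivalence established there, clause~(2) also holds; that is, some card $X \setminus \singleton{x}$ of $X$ has a maximal finite compactification. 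Invoking \cite[3.11]{part1}, which states that every compact Hausdorff space possessing a card with a maximal finite compactification is reconstructible, I conclude that $X$ is reconstructible.

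I do not expect a genuine obstacle in this argument: all of the substance has already been carried out, in Theorem~\ref{charfinitecompactificationcptmetr} (itself resting on Lemmas~\ref{boundedaway} and the weight bound~\ref{bigreconstructionresult2}) and in \cite[3.11]{part1}. The only point requiring a little care is that Theorem~\ref{charfinitecompactificationcptmetr} is stated only for spaces without isolated points, so one must remember to appeal to \cite{recpaper} to cover the spaces that do have one; once that case split is in place, the deduction is a short chain of implications.
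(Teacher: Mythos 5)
Your proof is correct and follows essentially the same route as the paper, which likewise deduces the theorem from Theorem~\ref{charfinitecompactificationcptmetr} together with \cite[3.11]{part1}. Your explicit handling of the isolated-point case (via the result of \cite{recpaper}) is a small but genuine point of care that the paper leaves implicit, since Theorem~\ref{charfinitecompactificationcptmetr} is only stated for spaces without isolated points.
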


\begin{mycor}
\label{Cantorchar}
The Cantor set is characterised topologically as the unique compact metrizable homogeneous non-reconstructible space. \qed
\end{mycor}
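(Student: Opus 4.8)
The plan is to establish the two halves of the characterisation separately: that the Cantor set $C$ is a compact metrizable homogeneous non-reconstructible space, and that any space with all four of these properties is homeomorphic to $C$. The first half is already recorded in the introduction: $C$ is compact and metrizable, it is classical that $C$ is topologically homogeneous, and $C$ and $C \setminus \singleton{0}$ share the deck $\singleton{C \setminus \singleton{0}}$, so $C$ is non-reconstructible. Thus the substance of the corollary is the uniqueness statement.

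So suppose $X$ is a compact metrizable homogeneous non-reconstructible space; I want to deduce $X \cong C$. The key input is Theorem~\ref{lotsofreconstrutible}: since $X$ is not reconstructible, $C_1(X)$ is a dense $G_\delta$ subset of $X$ of cardinality $\cont$, and in particular it is non-empty. Hence $X$ has at least one $1$-point component, and by homogeneity \emph{every} point of $X$ is then a $1$-point component; that is, $X$ is totally disconnected, and, being compact Hausdorff, zero-dimensional. Next I would rule out isolated points: if $X$ had one, then by homogeneity every point would be isolated, so $X$ would be discrete and hence, by compactness, finite, contradicting the fact recalled in the introduction from \cite{recpaper} that every compact space with an isolated point is reconstructible. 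Therefore $X$ is a non-empty compact metrizable zero-dimensional space without isolated points, and Brouwer's topological characterisation of the Cantor set yields $X \cong C$. Together with the first paragraph this shows that $C$ is, up to homeomorphism, the only compact metrizable homogeneous non-reconstructible space.

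I do not expect any genuine obstacle here: every step is a direct appeal to Theorem~\ref{lotsofreconstrutible}, to the defining property of homogeneity, or to standard facts (a compact discrete space is finite; a non-empty compact metrizable zero-dimensional space without isolated points is a Cantor set; compact spaces with an isolated point are reconstructible). The only points needing a moment's attention are that the conclusion ``$C_1(X)$ has size $\cont$'' already guarantees $X \neq \emptyset$, so that a $1$-point component really is produced, and that the ``isolated point'' case subsumes the finite discrete spaces; both are immediate.
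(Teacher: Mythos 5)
Your proof is correct and follows exactly the route the paper intends (the corollary is stated with a \qed precisely because it is immediate from Theorem~\ref{lotsofreconstrutible}): non-reconstructibility forces $C_1(X)$ to be a non-empty dense $G_\delta$, homogeneity then makes every component a singleton and rules out isolated points, and Brouwer's characterisation finishes. No gaps.
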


\section{Universal sequences of clopen sets}
\label{section4}

 In this section, we formally introduce universal sequences, show that every non-reconstructible compact metrizable space has a universal sequence, and provide a sufficient condition for the converse.

Recall that a sequence  $\Sequence{B_n}:{n \in \N}$ of subsets of a space $X$ is said to converge to a point $x$ ($B_n \to x$) if for every neighbourhood $U$ of $x$ there exists $N \in \N$ such that $B_n \subseteq U$ for all $n \geq N$. 
Suppose $\sequence{T}$ and $\Sequence{T_n}:{n \in \N}$ are sequences of topological spaces. A sequence $\Sequence{B_n}:{n \in \N}$ is of \emph{type} $\sequence{T_n}$ if $B_n \cong T_n$ for all $n\in \N$, and is of \emph{constant type} $\sequence{T}$ if $B_n \cong T$ for all $n \in \N$. 

We say that a topological space $X$ has a \emph{universal sequence of type} $\sequence{T_n}$ if every point of $X$ is the limit of a sequence of disjoint clopen sets of type $\sequence{T_n}$. The abbreviation `$X$ has a universal sequence' means that for some type $\Sequence{T_n}:{n\in\N}$, $X$ has a universal sequence of that type $\sequence{T_n}$. Lastly, $X$ has a \emph{constant universal sequence} if there is $\sequence{T}$ such that every point of $X$ is the limit of a sequence of disjoint clopen sets of constant type $\sequence{T}$.

\begin{myprop}[Universal Sequence Existence]
\label{universalsequence}
Every non-reconstructible compact metrizable space has a universal sequence.
\end{myprop}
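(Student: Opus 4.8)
The plan is to start from a non-reconstructible compact metrizable space $X$, fix a reconstruction $Z \not\cong X$ with $\singletonDeletion{Z} = \singletonDeletion{X}$, and extract from the pairing of cards a family of clopen ``swaps'' witnessing the universal sequence. Since $Z$ is a reconstruction, there are bijective correspondences between the points of $X$ and the points of $Z$ so that $X \setminus \{x\} \cong Z \setminus \{\phi(x)\}$. Because $Z \not\cong X$, no such correspondence can be induced by a global homeomorphism, and the first task is to leverage this to produce, for a given $x \in X$, a clopen neighbourhood $U$ of $x$ together with a homeomorphism $X \setminus \{x\} \to Z \setminus \{z\}$ whose ``mismatch'' is concentrated near $x$. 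The key technical input is Theorem~\ref{lotsofreconstrutible}: since $X$ is non-reconstructible, $C_1(X)$ is a dense $G_\delta$, so $x$ has arbitrarily small clopen neighbourhoods $V$ with $V$ itself having no isolated points and $C_1(V)$ dense in $V$ (apply Lemma~\ref{adaptsurabura} and note $1$-point components of $X$ lying in $V$ are $1$-point components of $V$). This gives us lots of room to carve off clopen pieces around every point.

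The main construction: for each $x \in X$ I would build the sequence $\sequence{B_n^x : n \in \N}$ by a back-and-forth / bookkeeping argument that simultaneously uses the card-homeomorphisms and the abundance of small clopen sets around both $x$ and its partner point $z = \phi(x)$ in $Z$. Roughly, fixing a homeomorphism $h\colon X \setminus \{x\} \to Z \setminus \{z\}$, one chooses a decreasing clopen neighbourhood base $\sequence{V_n}$ at $x$ and looks at how $h$ maps the ``shells'' $V_n \setminus V_{n+1}$; the failure of $h$ to extend to a homeomorphism $X \to Z$ forces a genuine discrepancy, and by iterating one obtains disjoint clopen sets $B_n^x \subseteq V_n \setminus V_{n+1}$ converging to $x$. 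The delicate point is to guarantee the homeomorphism type $T_n := [B_n^x]$ is independent of $x$: this should come from transitivity of the relation ``$X$ is homeomorphic to $Z$ after swapping a pair of clopen sets'' — one shows that for any two points $x, y$ the local pictures can be matched up because both are governed by the same ambient failure of reconstruction, so the same sequence of types $\sequence{T_n}$ works everywhere. Concretely, I expect one fixes one point $x_0$, extracts $\sequence{T_n} = \sequence{[B_n^{x_0}]}$, and then for an arbitrary $y$ uses a homeomorphism $X \setminus \{x_0\} \cong X \setminus \{y\}$ (these are cards of the same space, but more usefully one composes card-homeomorphisms through $Z$) to transport the sequence near $x_0$ to a sequence near $y$ of the same type.

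The hardest step, and the real content, will be establishing that the discrepancy between $X$ and $Z$ is ``locally detectable'' at every point and has a uniform shape — that is, producing the clopen sets with a single type sequence rather than a type sequence that depends on the point. I anticipate the argument packages the reconstruction data as a homeomorphism between $X$ and $Z$ with a single point removed on each side, analyses it via the $G_\delta$ of $1$-point components (where Lemma~\ref{adaptsurabura} gives clopen neighbourhood bases), and uses a telescoping/absorption trick (reminiscent of the Cantor set example $C \cong C \setminus \{0\}$, where the ``shift'' moves mass down a sequence of clopen copies) to show the mismatch is realised by sliding a sequence of pairwise homeomorphic clopen blocks. Routine point-set verifications — that the $B_n^x$ are clopen, pairwise disjoint, and converge to $x$ — I would relegate to short remarks, and the bookkeeping that makes the choices compatible across all points of $X$ I would organise as a single recursive construction rather than spelling out every case.
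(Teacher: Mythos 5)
Your proposal correctly identifies the starting data (a non-homeomorphic, hence non-compact, reconstruction $Z$, and the density of $C_1(X)$ from Theorem~\ref{lotsofreconstrutible}), but the two steps that carry all the weight are missing or wrong. First, the source of the clopen sets $B_n^x$ is not a local analysis of how a card-homeomorphism $h\colon X\setminus\{x\}\to Z\setminus\{z\}$ acts on shells $V_n\setminus V_{n+1}$: for $x$ in a non-trivial component there is no clopen neighbourhood base at $x$ at all (the \v{S}ura-Bura Lemma gives clopen neighbourhood bases of \emph{components}, not of points), so the shell picture does not get off the ground, and ``the failure of $h$ to extend forces a genuine discrepancy'' is precisely the assertion that needs proof, not an argument. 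The actual mechanism is global rather than local: since $X$ has a $1$-point component and the decks agree, some card of $Z$ has all components compact, whence every component of $Z$ is compact; therefore $\infty$ is a $1$-point component of the metrizable one-point compactification $\alpha Z$, and Lemma~\ref{adaptsurabura} gives $Z=\bigoplus_{n}T_n$ with each $T_n$ compact clopen. Pulling a tail $\bigoplus_{n>N_x}T_n$ back through $h$ into $X\setminus\{x\}$ produces pairwise disjoint compact clopen subsets of $X$ whose union is closed in $X\setminus\{x\}$, and compactness of $X$ then forces this sequence to converge to $x$. This decomposition of $Z$ is the crux of the proof and does not appear in your outline.

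Second, your uniformity step fails: there is in general no homeomorphism $X\setminus\{x_0\}\cong X\setminus\{y\}$ (distinct cards of a space need not be homeomorphic), and routing through $Z$ does not help, since $Z\setminus\{z_0\}$ and $Z\setminus\{z_y\}$ need not be homeomorphic either; so there is no way to ``transport'' a sequence from $x_0$ to an arbitrary $y$. In the correct argument uniformity is nearly automatic: every point $x$ receives a sequence of type $\langle T_n : n>N_x\rangle$ --- the same list of types up to a point-dependent starting index --- and the dependence on $N_x$ is removed by fixing two open sets $U,V$ with disjoint closures, each containing a copy of a fixed tail $\bigoplus_{n>N}T_n$, and using the copies inside $U$ (resp.\ $V$) to supply the finitely many missing terms $T_n$ for $N<n\le N_x$ when $x$ lies outside $\closure{U}$ (resp.\ $\closure{V}$). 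Without the decomposition $Z=\bigoplus_n T_n$, neither the existence of the convergent clopen sequences nor their common type can be obtained along the route you describe.
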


\begin{proof}
Let $X$ be a compact metrizable space with a non-homeomorphic reconstruction $Z$. Then $Z$ is non-compact  \cite[5.2]{recpaper}. It follows from results of \cite{recpaper}, but in this case can also be proved directly, that $Z$ is  locally compact, separable and metrizable. So the $1$-point compactification, $\alpha Z$, of $Z$,  is metrizable. Moreover, since $X$  contains $1$-point components by  Theorem~\ref{lotsofreconstrutible}, every component of $Z$ is compact. In particular, the point $\infty \in \alpha Z$ is a $1$-point component. By Lemma~\ref{adaptsurabura}, the point $\infty$ has a (countable) neighbourhood base of clopen sets in $\alpha Z$, and hence $Z$ can be written as $\bigoplus_{n \in \N} T_n$ for disjoint clopen compact subsets $T_n \subset Z$.  We will show that a tail of $\Sequence{T_n}:{n \in \N}$ is the type of the desired universal sequence.

\begin{myclmn}
For every $x \in X$ there is $N_x \in \N$ such that $x$ is the limit of a sequence of disjoint clopen sets of type $\Sequence{T_n}:{n > N_x}$. 
\end{myclmn}

To see the claim, note that $X \setminus \singleton{x} \cong Z \setminus \singleton{z}$ for some suitable $z \in Z$. But if $z \in T_{N_x}$ then $\Sequence{T_n}:{n > N_x}$ is an infinite sequence of disjoint compact clopen sets in $X \setminus \singleton{x}$ such that their union is a closed set. Compactness of $X$ now implies that $\Sequence{T_n}:{n > N_x}$ converges to $x$.

\begin{myclmn}
There is $N \in \N$ such that every $x \in X$ is the limit of a sequence of disjoint clopen sets of type $\Sequence{T_n}:{n > N}$. 
\end{myclmn}

Choose non-empty open sets $U$ and $V$ with disjoint closures and $N \in \N$ such that both $U$ and $V$ contain a copy of $\bigoplus_{n > N} T_n$. If $x \in X \setminus \closure{U}$, apply the previous claim and use the copies of $T_n$ for $N < n \le N_x$ in $U$ to obtain a sequence as claimed. By symmetry, the same holds for $x \in X \setminus \closure{V}$, completing the proof.  
%
\end{proof}

If a space $X$ has a universal sequence of type $\sequence{T_n}$, there is a witnessing indexed family $\set{\script{U}_x}:{x \in X}$ where every $\script{U}_x = \Sequence{U^x_n}:{n \in \N}$ is a sequence of disjoint clopen subsets of $X$ of type $\sequence{T_n}$ converging to $x$. Let us call such an indexed family a \emph{universal sequence system (of type $\sequence{T_n}$)}. We distinguish the following additional properties. A universal sequence system $\set{\script{U}_x}:{x \in X}$
\begin{itemize}
\item is \emph{complement-equivalent} if for all $x,y \in X$, and $N \in \N$ with $\bigcup_{n>N} U^x_n \cap  \bigcup_{n>N} U^y_n=\emptyset$, we have $X \setminus  \bigcup_{n>N} U_n^x \cong X \setminus  \bigcup_{n>N} U_n^y$,
\item has \emph{augmentation} if detaching a sequence $\script{U}_x$ from its unique limit point $x$ and making it converge onto any $y \in X$ produces a space homeomorphic to $X$, i.e.\ for all $x,y \in X$, 
we have 
$$\p{\p{X \setminus \bigcup\script{U}_x} \oplus  \alpha\p{\bigcup \script{U}_x}}/\Set{y,\infty} \cong X,$$
\item is \emph{point-fixing} if for all $x,y \in X$ and $N \in \N$ with $\bigcup_{n>N} U^x_n \cap  \bigcup_{n>N} U^y_n=\emptyset$, there is a homeomorphism $f_{xy} \colon X \setminus \bigcup_{n>N} U_n^x \to X \setminus \bigcup_{n>N} U_n^y$ fixing the points $x$ and $y$, and
\item is \emph{thin} if for all $x \in X$, the set $\singleton{x} \cup \bigcup \mathcal{U}_x$ is not a neighbourhood of $x$. 
\end{itemize}

\begin{mylem}
\label{pointfixing}
Every point-fixing universal sequence system is complement-equivalent and has augmentation.
\end{mylem}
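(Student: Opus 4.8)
The plan is to obtain complement-equivalence essentially for free and to spend the work on augmentation, the heart of which is a canonical ``re-gluing'' description of $X$ in terms of the clopen pieces of a single sequence $\mathcal{U}_x$. Throughout (as in the rest of this section) $X$ is compact, so each $U^x_n$, being clopen in $X$, is compact; we may also assume every $U^x_n$ is non-empty (discard the empty terms), so that the $U^x_n$ are infinitely many pairwise disjoint clopen sets converging to $x$, whence $x \notin \bigcup_n U^x_n$ and $\bigcup_n U^x_n$ is not compact. Complement-equivalence is then immediate: for $x,y \in X$ and $N \in \N$ with $\bigcup_{n>N}U^x_n \intersect \bigcup_{n>N}U^y_n = \emptyset$, the homeomorphism $f_{xy}$ furnished by the point-fixing property is in particular a homeomorphism of $X \setminus \bigcup_{n>N}U^x_n$ onto $X \setminus \bigcup_{n>N}U^y_n$, which is exactly what complement-equivalence asks for.

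For augmentation I would first prove a \emph{re-gluing lemma}: for every $x \in X$, writing $B = \bigcup_n U^x_n$, one has $X \cong \p{\p{X \setminus B} \oplus \alpha(B)}/\Set{x,\infty}$. Here $X \setminus B$ is compact (closed in $X$), $B$ is an open locally compact $\sigma$-compact subspace, topologically the disjoint sum $\bigoplus_n U^x_n$, and so $\p{X \setminus B} \oplus \alpha(B)$ is compact Hausdorff. Using that the pieces are clopen and converge to $x$ one checks $\closure{B} = B \union \Set{x}$; the natural map $\psi$---the inclusion on $X \setminus B$, the inclusion on $B \subseteq \alpha(B)$, and $\infty \mapsto x$---is then a continuous surjection onto $X$ whose only failure of injectivity is $\psi(x) = \psi(\infty) = x$, and its continuity at $\infty$ is precisely the convergence $U^x_n \to x$. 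Since a continuous bijection from a compact space onto a Hausdorff space is a homeomorphism, $\psi$ induces the asserted homeomorphism. I would also record the \emph{shuffle remark}: since $\bigcup_n U^x_n = \bigcup_{n \le N}U^x_n \oplus \bigcup_{n>N}U^x_n$ with $\bigcup_{n \le N}U^x_n$ compact and clopen in $X$, one has $\p{X \setminus \bigcup_n U^x_n} \oplus \alpha\p{\bigcup_n U^x_n} = \p{X \setminus \bigcup_{n>N}U^x_n} \oplus \alpha\p{\bigcup_{n>N}U^x_n}$ as topological spaces, and so for every $y \in X$ and $N \in \N$ the quotient $\p{\p{X \setminus \bigcup_n U^x_n} \oplus \alpha\p{\bigcup_n U^x_n}}/\Set{y,\infty}$ is homeomorphic to $\p{\p{X \setminus \bigcup_{n>N}U^x_n} \oplus \alpha\p{\bigcup_{n>N}U^x_n}}/\Set{y,\infty}$.

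Now fix $x,y \in X$ and set $Y = \p{\p{X \setminus \bigcup\mathcal{U}_x} \oplus \alpha\p{\bigcup\mathcal{U}_x}}/\Set{y,\infty}$. If $x = y$ the re-gluing lemma gives $Y \cong X$ at once. Otherwise choose $N$ so large that $P := \bigcup_{n>N}U^x_n$ and $Q := \bigcup_{n>N}U^y_n$ lie in disjoint neighbourhoods of $x$ and $y$, so that $P \intersect Q = \emptyset$. By the shuffle remark $Y \cong \p{\p{X \setminus P} \oplus \alpha(P)}/\Set{y,\infty}$, and by the re-gluing lemma at $y$ together with the shuffle remark $X \cong \p{\p{X \setminus Q} \oplus \alpha(Q)}/\Set{y,\infty}$. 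The point-fixing property supplies a homeomorphism $f \colon X \setminus P \to X \setminus Q$ with $f(x) = x$ and $f(y) = y$; and since the system has type $\Sequence{T_n}:{n \in \N}$, we have $U^x_n \cong T_n \cong U^y_n$ for every $n$, hence a homeomorphism $P = \bigoplus_{n>N}U^x_n \to \bigoplus_{n>N}U^y_n = Q$ carrying each $U^x_n$ onto $U^y_n$, which extends to $h \colon \alpha(P) \to \alpha(Q)$ with $h(\infty) = \infty$. Then $f \oplus h$ is a homeomorphism of $\p{X \setminus P} \oplus \alpha(P)$ onto $\p{X \setminus Q} \oplus \alpha(Q)$ sending $y \mapsto y$ and $\infty \mapsto \infty$, so it respects the identification $\Set{y,\infty}$ on each side and descends to a homeomorphism $Y \cong X$, as required.

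The main obstacle is the re-gluing lemma: showing that detaching the clopen pieces of $\mathcal{U}_x$ and re-attaching them through a one-point compactification faithfully restores the topology of $X$ at the accumulation point $x$. The device ``a continuous bijection from a compact space onto a Hausdorff space is a homeomorphism'' makes this painless, reducing it to the two routine verifications used above---continuity of $\psi$ at $\infty$, which is the convergence $U^x_n \to x$, and the identity $\closure{\bigcup_{n>N}U^x_n} = \bigcup_{n>N}U^x_n \union \Set{x}$, which follows from the pieces being clopen and converging to $x$. Everything else (the shuffle remark, assembling $f \oplus h$, and descending to the quotients) is formal, using only that the relevant pieces are clopen, that the relevant subspaces are compact, and that $X$ is Hausdorff.
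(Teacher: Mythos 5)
Your proof is correct and follows essentially the same route as the paper's: complement-equivalence is read off directly from the point-fixing homeomorphism, and augmentation is obtained by transporting the identity $X \cong \p{\p{X\setminus\bigcup\script{U}_y}\oplus\alpha\p{\bigcup\script{U}_y}}/\Set{y,\infty}$ along $f_{xy}$ (which fixes $y$) and the type-equivalence $\bigcup\script{U}_x\cong\bigcup\script{U}_y$. The only difference is presentational: you prove the re-gluing step explicitly (via a continuous bijection from a compact space onto a Hausdorff space), whereas the paper asserts it as the unremarked first ``$\cong$'' in its chain.
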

\begin{proof}
Complement-equivalent is clear. Let $x,y \in X$. As $\p{X \setminus \bigcup \script{U}_x} \oplus \bigcup \script{U}_x = \p{X \setminus \bigcup_{n>N} U_n^x} \oplus \bigcup_{n>N} U_n^x$ for all $N \in \N$, we may assume $y \notin \bigcup \script{U}_x$. Then
\begin{align*}
X & \cong \p{\p{X \setminus  \bigcup \script{U}_{y}} \oplus \alpha \p{\bigcup \script{U}_{y}}}/\Set{y,\infty} \\
&\cong \p{\p{X \setminus  \bigcup \script{U}_{y}} \oplus \alpha \p{\bigcup \script{U}_{y}}}/\Set{f_{xy}(y),\infty} \quad \textnormal{(as $f_{xy}$ fixes $y$)}\\ 
&\cong \p{\p{X \setminus  \bigcup \script{U}_{x}} \oplus \alpha \p{\bigcup \script{U}_{y}}}/\Set{y,\infty} \quad \textnormal{(as $f_{xy}$ is a homeomorphism)} \\ 
&\cong \p{\p{X \setminus  \bigcup \script{U}_{x}} \oplus \alpha \p{\bigcup \script{U}_{x}}}/\Set{y,\infty} \quad \textnormal{(as $\bigcup \script{U}_{y} \cong \bigcup \script{U}_x$).} \qedhere 
\end{align*}
\end{proof}

Recall that a space is \emph{pseudocompact} if every discrete family of open sets is finite. For Tychonoff spaces this coincides with the usual definition that every continuous real-valued function is bounded \cite[\S 3.10]{Eng}. Evidently, compact spaces are pseudocompact. 

\begin{mylem}
\label{thm:definitiveNonRec}
Every pseudocompact Hausdorff space with a complement-equivalent universal sequence system with augmentation (in particular: with a point-fixing universal sequence system) is non-reconstructible. 
\end{mylem}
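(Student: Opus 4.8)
The plan is to build, for any such space $X$, a reconstruction $Z$ that is not homeomorphic to $X$. The guiding idea is that a universal sequence is exactly the structure one can ``unhook'' from its limit point and send off to infinity without changing the deck. Fix a point $x_0$ and its sequence $\script{U}_{x_0}=\langle U^{x_0}_n\rangle$ of pairwise disjoint clopen sets converging to $x_0$ (we may assume infinitely many of the $T_n$ are nonempty). Since the $U^{x_0}_n$ are pairwise disjoint and converge to $x_0$, the closure in $X$ of $W:=\bigcup_{n\text{ odd}}U^{x_0}_n$ is exactly $\{x_0\}\cup W$; hence $W$ is open, carries the free-sum topology $\bigoplus_{n\text{ odd}}U^{x_0}_n$, and $X\setminus W$ is a closed subspace still containing $x_0$, with the even-indexed terms still converging to it. Put
$$Z:=\p{X\setminus W}\oplus W.$$
(Splitting off only the odd-indexed terms is what keeps $x_0$ non-isolated in $Z$; that this matters is visible already for $X=C$, where detaching all of $C\setminus\{x_0\}$ produces a space with the wrong deck.) Then $Z\not\cong X$ because $Z$ is not pseudocompact: inside the clopen set $W\subseteq Z$ the sets $\{U^{x_0}_n:n\text{ odd}\}$ form an infinite discrete family of nonempty open sets, whereas $X$ is pseudocompact.

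It remains to show $\singletonDeletion{Z}=\singletonDeletion{X}$. The point $x_0$ is harmless: deleting it frees the whole of $\bigcup\script{U}_{x_0}$ in either space, and one reads off directly that $X\setminus\{x_0\}$ and $Z\setminus\{x_0\}$ are literally the same space $\p{X\setminus(W\cup\{x_0\})}\oplus W$. For a general point one sets up a correspondence between the remaining cards of $X$ and of $Z$ and argues by cases, according to whether the deleted point lies in the ``core'' $X\setminus\bigcup\script{U}_{x_0}$, in an odd-indexed term (i.e.\ in $W$), or in an even-indexed term. Augmentation is exactly the tool that moves a copy of the released sequence around: re-attaching its limit onto the point being deleted converts a card of $Z$ into a card of $X$, and detaching and then re-releasing converts the other way. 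Complement-equivalence handles deletions happening \emph{inside} one of the clopen terms: it lets the local effect of such a deletion be absorbed, since for each $N$ the space obtained by removing a tail $\bigcup_{n>N}U^x_n$ of any $\script{U}_x$ is the same up to homeomorphism for all $x$ (with disjoint tails). Assembling the cases gives $\singletonDeletion{Z}=\singletonDeletion{X}$, so $Z$ is a reconstruction of $X$ that is not homeomorphic to $X$, and $X$ is non-reconstructible. (The parenthetical assertion in the statement is Lemma~\ref{pointfixing}.)

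The main obstacle is this case analysis for the deck equality: writing down the explicit card correspondence and verifying the homeomorphisms, the delicate point being to coordinate the single released copy $W$ with the universal sequences carried by the \emph{other} points of $X$. The trickiest case is deleting a point $x$ lying in some $U^{x_0}_n$: there one simultaneously disturbs $\script{U}_{x_0}$ and a tail of $\script{U}_x$, and it is precisely the interplay of complement-equivalence (to neutralise the deletion locally, absorbing the damaged term) with augmentation (to re-seat the released sequence onto whatever point is needed) that lets the resulting card be re-read as a card of the other space. Everything else — the free-sum description of $W$, the closure computation, and the failure of pseudocompactness of $Z$ — is routine.
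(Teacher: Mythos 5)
There is a genuine gap, and it stems from your decision to detach only the odd-indexed terms. The hypotheses of the lemma are stated exclusively in terms of full tails: complement-equivalence compares $X\setminus\bigcup_{n>N}U^x_n$ with $X\setminus\bigcup_{n>N}U^y_n$, and augmentation detaches the whole of $\bigcup\script{U}_x$, compactifies it by a single point at infinity, and re-attaches it at $y$. Neither hypothesis says anything about $W=\bigcup_{n\ \mathrm{odd}}U^{x_0}_n$ or about $X\setminus W$, which still contains the even-indexed terms clustering at $x_0$. So the two tools you invoke do not apply to the configurations your construction creates: for a point $y$ of the core you would need to compare $X\setminus W$ (evens still attached) with $X\setminus\bigcup_{n>N}U^y_n$ (a full tail removed), and for a card of $Z$ you would need to re-attach only the odd block at $y$ while the even block stays attached at $x_0$ --- neither is an instance of complement-equivalence or of augmentation. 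Since the deck equality is in any case only asserted (``assembling the cases gives \dots'') rather than verified, the proof is incomplete at its central step.

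The modification is also unnecessary. The correct reconstruction is $Z=\p{X\setminus\bigcup\script{U}_{x_0}}\oplus\bigcup\script{U}_{x_0}$, detaching everything. Your worry that $x_0$ could become isolated cannot arise under the hypotheses: a space with a universal sequence system has no isolated points, and augmentation with any $y\neq x_0$ identifies $\p{\p{X\setminus\bigcup\script{U}_{x_0}}\oplus\alpha\p{\bigcup\script{U}_{x_0}}}/\Set{y,\infty}$ with $X$; in this copy of $X$ the point $x_0$ keeps a neighbourhood base taken from $X\setminus\bigcup\script{U}_{x_0}$, so it is not isolated there. (In your Cantor-set scenario with $\bigcup\script{U}_0=C\setminus\{0\}$ the system is neither complement-equivalent nor has augmentation, so it is not evidence against full detachment.) With full detachment both inclusions become one-line computations with no case analysis: after passing to disjoint tails, $X\setminus\{y\}=\bigl(\p{X\setminus\bigcup\script{U}_y}\setminus\{y\}\bigr)\oplus\bigcup\script{U}_y\cong\bigl(\p{X\setminus\bigcup\script{U}_{x_0}}\setminus\{z\}\bigr)\oplus\bigcup\script{U}_{x_0}=Z\setminus\{z\}$ by complement-equivalence together with equality of types, and $Z\setminus\{y\}\cong X\setminus\{z\}$ by deleting the glued point from the augmentation homeomorphism. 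Your non-pseudocompactness argument showing $Z\not\cong X$ is correct and is exactly the paper's.
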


\begin{proof}
Let $X$ be a pseudocompact Hausdroff space with a universal sequence system $\set{\script{U}_x}:{x \in X}$ which is complement-equivalent with augmentation. We verify that $Z=\p{X \setminus \bigcup \script{U}_x} \oplus \bigcup \script{U}_x$, or equivalently $\p{X \setminus \bigcup_{n>N} U_n^x} \oplus \bigcup_{n>N} U_n^x$, is a non-pseudocompact---and hence non-homeomorphic---reconstruction of $X$. Indeed, it is non-pseudocompact, as $\script{U}_x$ is an infinite discrete family of open sets in $Z$.

The inclusion ``$ \singletonDeletion{X} \subseteq \singletonDeletion{Z}$" follows from complement-equivalence. Consider a card $X \setminus \singleton{y}$. We may assume $\bigcup \script{U}_x \cap \bigcup \script{U}_y = \emptyset$. Then for some suitable $z \in X$,
\[X \setminus \singleton{y} =  X \setminus \p{ \singleton{y} \cup \bigcup \script{U}_y} \oplus \bigcup \script{U}_y \cong X \setminus \p{ \singleton{z} \cup \bigcup \script{U}_x} \oplus \bigcup \script{U}_x.\]

The inclusion ``$ \singletonDeletion{X} \supseteq \singletonDeletion{Z}$" follows from augmentation. Consider a card $Z \setminus \singleton{y}$. Since $\p{\alpha Z}/\Set{\infty,y} \cong X$, we have for some suitable $z \in X$ that
\[Z \setminus \singleton{y} =  \p{\p{\alpha Z}/\Set{\infty,y}} \setminus \singleton{\Set{\infty,y}} \cong X \setminus \singleton{z}. \qedhere\]
 \end{proof}

\section{Universal sequences imply non-reconstructibility}

In this section we prove the forward implication of the Reconstruction Characterisation Theorem~\ref{recchartthm}.

\begin{myprop}
\label{recchartlemma}
A compact metrizable space with a universal sequence is non-reconstructible.
\end{myprop}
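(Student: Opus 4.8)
The strategy is to upgrade a bare universal sequence on a compact metrizable space $X$ to a \emph{point-fixing} universal sequence system, and then invoke Lemma~\ref{thm:definitiveNonRec}. By hypothesis, $X$ has a universal sequence of some type $\sequence{T_n}$; fix for each $x$ a witnessing sequence $\script{U}_x = \Sequence{U^x_n}:{n\in\N}$ of pairwise disjoint clopen sets with $U^x_n \cong T_n$ and $\script{U}_x \to x$. The first issue is that nothing yet relates the homeomorphism $X\setminus\singleton{x}\to X\setminus\singleton{y}$ (which we want) to these clopen pieces. The key observation is that, since each $U^x_n$ is clopen and $\script{U}_x\to x$, the set $\singleton{x}\cup\bigcup_{n>N}U^x_n$ is clopen in $X$ for every $N$ (its complement $X\setminus\bigcup_{n>N}U^x_n$ is a union of a clopen set and the limit point, hence closed; openness of the union follows from convergence), so $X\setminus\bigcup_{n>N}U^x_n$ is a clopen subset of $X$ containing $x$ but with $x$ no longer a limit of the removed pieces. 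Thus $X$ decomposes as a disjoint clopen sum $\bigl(X\setminus\bigcup_{n>N}U^x_n\bigr)\oplus\bigoplus_{n>N}U^x_n$.

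Next I would pass through the one-point compactification, exactly as in the proof of Proposition~\ref{universalsequence} but in reverse. Detaching the tail $\bigoplus_{n>N}U^x_n$ and collapsing with $x$: the space $\bigl(X\setminus\bigcup_{n>N}U^x_n\bigr)\oplus\alpha\bigl(\bigcup_{n>N}U^x_n\bigr)$ with $\infty$ identified to $x$ should be homeomorphic to $X$, because re-attaching a clopen-converging tail of compacta to its limit point is the inverse of splitting it off — one checks the obvious bijection is continuous both ways using that $\bigl(\bigcup_{n>N}U^x_n\bigr)\cup\singleton{x}$ is a clopen neighbourhood-structured piece of $X$. This already gives \emph{augmentation} once we know the tails for different $x,y$ can be taken disjoint (shrink $N$ using that $\script{U}_x\to x$, $\script{U}_y\to y$, and in the case $x=y$ there is nothing to prove, while for $x\ne y$ choose $N$ large enough that the tails lie in disjoint open sets around $x,y$ — available by Hausdorffness and convergence, after possibly first thinning each $\script{U}_x$ to a subsequence).

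For \emph{complement-equivalence} and the point-fixing strengthening I would argue as follows. Given $x,y$ with the tails $A_x=\bigcup_{n>N}U^x_n$ and $A_y=\bigcup_{n>N}U^y_n$ disjoint, both $X\setminus A_x$ and $X\setminus A_y$ are clopen subsets of $X$, and each is obtained from $X$ by deleting a clopen copy of $\bigoplus_{n>N}T_n$; the point is that $X\setminus A_x = \bigl(X\setminus(A_x\cup A_y)\bigr)\oplus A_y$ and symmetrically $X\setminus A_y=\bigl(X\setminus(A_x\cup A_y)\bigr)\oplus A_x$, and since $A_x\cong\bigoplus_{n>N}T_n\cong A_y$ via a homeomorphism matching $U^x_n$ to $U^y_n$, gluing that to the identity on the common clopen part $X\setminus(A_x\cup A_y)$ produces a homeomorphism $f_{xy}\colon X\setminus A_x\to X\setminus A_y$ which is the identity outside $A_x\cup A_y$; in particular it fixes $x$ and $y$, which both lie in $X\setminus(A_x\cup A_y)$. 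That establishes the point-fixing property, hence by Lemma~\ref{pointfixing} complement-equivalence and augmentation, and then Lemma~\ref{thm:definitiveNonRec} (applicable since compact spaces are pseudocompact Hausdorff) gives non-reconstructibility.

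\textbf{Main obstacle.} The only genuinely delicate point is the bookkeeping that lets us assume the tails $\bigcup_{n>N}U^x_n$ and $\bigcup_{n>N}U^y_n$ are disjoint \emph{simultaneously for all pairs} in a way compatible with the sum-decompositions — i.e.\ making sure the $N$ can be chosen uniformly, or that the definition of universal sequence system only requires disjointness pairwise as needed. A careful reading of the definitions in Section~\ref{section4} shows the required properties are only ever invoked for a single pair $x,y$ at a time with a single $N$, so it suffices to thin each $\script{U}_x$ once (to guarantee, say, that $\singleton{x}\cup\bigcup\script{U}_x$ is not a neighbourhood of $x$, should thinness be wanted) and then, for each given pair, discard finitely many initial terms; convergence makes this harmless. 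Everything else is the routine verification that splitting off and gluing back a clopen null-sequence of compacta are mutually inverse homeomorphisms.
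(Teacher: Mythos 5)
There is a genuine gap, and it occurs at the two places where your argument does all the real work. First, your structural claim that $\singleton{x}\cup\bigcup_{n>N}U^x_n$ is clopen, so that $X$ decomposes as a disjoint clopen sum $\bigl(X\setminus\bigcup_{n>N}U^x_n\bigr)\oplus\bigoplus_{n>N}U^x_n$, is false in general. Convergence of $\Sequence{U^x_n}:{n>N}$ to $x$ does give that $\singleton{x}\cup\bigcup_{n>N}U^x_n$ is \emph{closed}, but it does not make it a neighbourhood of $x$: in the Cantor set, a disjoint sequence of small clopen sets converging to $x$ typically covers only a meagre part of each basic neighbourhood of $x$. (The paper's ``thin'' condition is defined precisely as the negation of your claim, and Lemma~\ref{thinuniversalsequencesystem} arranges for it to hold.) Consequently $X\setminus\bigcup_{n>N}U^x_n$ is closed but not open, and the clopen-sum bookkeeping underlying both your augmentation step and your complement decompositions does not go through as stated.

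The more serious problem is the proposed point-fixing homeomorphism. Writing $A_x=\bigcup_{n>N}U^x_n$ and $A_y=\bigcup_{n>N}U^y_n$, the map $f_{xy}\colon X\setminus A_x\to X\setminus A_y$ that is the identity on $X\setminus(A_x\cup A_y)$ and sends $U^y_n$ homeomorphically onto $U^x_n$ is a bijection but is \emph{not continuous at $y$}: the sets $U^y_n$ converge to $y$, yet their images $U^x_n$ converge to $x\neq y$, while $f_{xy}(y)=y$; symmetrically the inverse fails at $x$. This discontinuity is exactly the obstruction that the paper's Lemma~\ref{constructpointfixhomeo} is built to overcome, and it cannot be removed by relabelling: one must first refine the sequence so that it is thin and so that a dense set of points $z$ has clopen neighbourhoods of the form $\singleton{z}\cup\bigoplus_{n>n_0}V^z_n$ with $V^z_n\cong T_n$ (Lemmas~\ref{thinuniversalsequencesystem} and~\ref{Rolfslemma}), and then move the blocks by a Hilbert's-hotel shift along a doubly infinite chain of such auxiliary points $z_k$ with $V_k\to y$ as $k\to-\infty$ and $V_k\to x$ as $k\to+\infty$, so that each block is displaced only one step along the chain and continuity is preserved at both $x$ and $y$. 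Your proposal is missing this entire mechanism, which is the heart of the paper's proof of Proposition~\ref{recchartlemma}; the remaining reductions (to Lemma~\ref{pointfixing} and Lemma~\ref{thm:definitiveNonRec}) are correctly identified.
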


In order to apply Lemma~\ref{thm:definitiveNonRec}, we will show that every compact metrizable space with a universal sequence has a universal sequence \emph{refinement} with a corresponding point-fixing universal sequence system. Indeed we show in Lemma~\ref{thinuniversalsequencesystem} that every compact metrizable space with a universal sequence has a thin universal sequence system. Lemmas~\ref{subuniversalsequences}, \ref{Rolfslemma} and~\ref{constructpointfixhomeo}, imply that this thin system can be refined to a (thin) point-fixing universal sequence system, as required. 

Here, a sequence of topological spaces $\Sequence{T'_n}:{n \in \N}$ is a \emph{refinement} of $\Sequence{T_n}:{n \in \N}$ if there is an injective map $\phi \colon \N \to \N$ such that $T'_n \hookrightarrow T_{\phi(n)}$ embeds as a clopen subset for all $n \in \N$. Note that if $\set{\script{U}_x}:{x \in X}$ is a universal sequence system of type $\sequence{T_n}$, then any refinement $\sequence{T'_n}$ of $\sequence{T_n}$ naturally induces a refinement $\set{\script{U}'_x}:{x \in X}$ of $\set{\script{U}_x}:{x \in X}$.

%

\begin{mylem}
\label{subuniversalsequences}
Any refinement of a (thin) universal sequence system is again a (thin) universal sequence system. \qed
\end{mylem}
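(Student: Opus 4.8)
The statement bundles two claims: (a) a refinement of a universal sequence system is a universal sequence system, and (b) if the original system is thin, so is the refinement. Both are essentially verifications against the definitions, so the "proof" is really an unwinding of the relevant notions, and the only subtle point is checking that the refined terms remain *disjoint clopen* sets that still *converge* to the correct limit.

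Here is how I would proceed. Let $\set{\script{U}_x}:{x \in X}$ be a universal sequence system of type $\sequence{T_n}$, with $\script{U}_x = \Sequence{U^x_n}:{n \in \N}$, and let $\sequence{T'_n}$ refine $\sequence{T_n}$ via an injection $\phi\colon\N\to\N$ together with clopen embeddings $T'_n \hookrightarrow T_{\phi(n)}$. As noted in the paragraph preceding the lemma, this induces for each $x$ a choice of clopen subset $U'^x_n \subseteq U^x_{\phi(n)}$ with $U'^x_n \cong T'_n$; set $\script{U}'_x = \Sequence{U'^x_n}:{n \in \N}$. First I would observe that each $U'^x_n$ is clopen in $X$: it is clopen in $U^x_{\phi(n)}$ (by the embedding being clopen), and $U^x_{\phi(n)}$ is clopen in $X$, so $U'^x_n$ is clopen in $X$. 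Next, disjointness: since $\phi$ is injective the indices $\phi(n)$ are distinct, the sets $U^x_{\phi(n)}$ are pairwise disjoint (being a subfamily of the pairwise disjoint family $\script{U}_x$), and $U'^x_n \subseteq U^x_{\phi(n)}$, so the $U'^x_n$ are pairwise disjoint. Finally, convergence: given a neighbourhood $V$ of $x$, there is $N$ with $U^x_m \subseteq V$ for all $m > N$; since $\phi$ is injective, $\phi(n) > N$ for all but finitely many $n$, and for those $n$ we get $U'^x_n \subseteq U^x_{\phi(n)} \subseteq V$. Hence $\script{U}'_x$ is a sequence of pairwise disjoint clopen sets of type $\sequence{T'_n}$ converging to $x$, for every $x$, which is exactly what it means for $\set{\script{U}'_x}:{x \in X}$ to be a universal sequence system of type $\sequence{T'_n}$.

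For the parenthetical "thin" clause, suppose in addition that $\set{\script{U}_x}:{x \in X}$ is thin, i.e.\ $\singleton{x} \cup \bigcup\script{U}_x$ is not a neighbourhood of $x$ for any $x$. Since $U'^x_n \subseteq U^x_{\phi(n)}$ we have $\bigcup\script{U}'_x \subseteq \bigcup\script{U}_x$, hence $\singleton{x}\cup\bigcup\script{U}'_x \subseteq \singleton{x}\cup\bigcup\script{U}_x$. A subset of a non-neighbourhood of $x$ is again a non-neighbourhood of $x$, so $\singleton{x}\cup\bigcup\script{U}'_x$ is not a neighbourhood of $x$, and the refinement is thin as well.

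I do not expect a genuine obstacle here; the one place to be careful is the convergence argument, where one must use injectivity of $\phi$ (rather than surjectivity or monotonicity, neither of which is assumed) to guarantee that $\phi(n)$ eventually exceeds any fixed $N$. Everything else is a direct appeal to "clopen-in-clopen is clopen," "subfamily of a disjoint family is disjoint," and "subset of a non-neighbourhood is a non-neighbourhood."
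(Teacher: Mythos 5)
Your proposal is correct and fills in exactly the routine verification the paper omits (the lemma is stated with a \qed and no proof): clopen-in-clopen is clopen, a subfamily of a pairwise disjoint family indexed along an injection stays pairwise disjoint, convergence survives because injectivity of $\phi$ forces $\phi(n)$ to eventually exceed any fixed $N$, and thinness is inherited since $\bigcup\script{U}'_x \subseteq \bigcup\script{U}_x$ and a subset of a non-neighbourhood of $x$ is again a non-neighbourhood of $x$. Nothing is missing.
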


\begin{mylem}
\label{thinuniversalsequencesystem}
Every universal sequence system has a thin universal sequence system refinement. 
\end{mylem}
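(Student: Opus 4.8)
The plan is to take an arbitrary universal sequence system $\set{\script{U}_x}:{x \in X}$ of some type $\sequence{T_n}$ and, for each $x$, thin out the sequence $\script{U}_x$ so that the union of the remaining terms together with $\singleton{x}$ is no longer a neighbourhood of $x$. The point is that ``$\singleton{x} \cup \bigcup \mathcal{U}_x$ is not a neighbourhood of $x$'' simply says that $x$ has a neighbourhood missing infinitely many of the $U^x_n$, equivalently that infinitely many of the $U^x_n$ can be discarded while leaving a sequence that still converges to $x$. So the first step is to observe that, since $X$ is compact metrizable (hence first countable) and each $\script{U}_x$ is an infinite sequence of \emph{pairwise disjoint} clopen sets converging to $x$, one can always pass to a subsequence $\script{U}'_x = \Sequence{U^x_{n_k}}:{k \in \N}$ that is ``spread out'' enough: fix a countable decreasing neighbourhood base $\set{W^x_k}:{k \in \N}$ at $x$ and choose the indices $n_k$ strictly increasing so that $U^x_{n_k} \subseteq W^x_k$; then $\script{U}'_x$ still converges to $x$, but now between consecutive chosen indices there are infinitely many discarded clopen sets, and in particular $\singleton{x} \cup \bigcup \script{U}'_x$ omits a clopen neighbourhood of $x$ — indeed $X \setminus \bigcup_k U^x_{n_k} \supseteq X \setminus \Union_k W^x_{k}$ is easily arranged to be a neighbourhood of $x$ once one also throws away, say, the very first clopen set $U^x_{n_0}$ or shifts the indices, using disjointness of the family.

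The main subtlety is that this thinning must be done \emph{along a single type}: a universal sequence system is required to have one fixed type $\sequence{T_n}$ valid simultaneously at every point $x$, so one cannot simply choose the subsequence $\Sequence{n_k}:{k\in\N}$ independently at each $x$. The fix is to choose the subsequence once and for all. Concretely, I would let $\phi \colon \N \to \N$ be \emph{any} fixed strictly increasing map whose image is co-infinite — for definiteness $\phi(k) = 2k+1$ — and set $T'_k := T_{\phi(k)}$. Then $\sequence{T'_k}$ is a refinement of $\sequence{T_n}$ (each $T'_k$ is literally equal to, hence clopen in, $T_{\phi(k)}$), and by Lemma~\ref{subuniversalsequences} the induced refinement $\script{U}'_x = \Sequence{U^x_{\phi(k)}}:{k \in \N}$ is again a universal sequence system, now of type $\sequence{T'_k}$. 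It remains to check thinness: for each $x$, the discarded terms $\set{U^x_{n}}:{n \notin \mathrm{im}\,\phi}$ form an infinite family of pairwise disjoint clopen sets, each disjoint from $\bigcup \script{U}'_x$, all converging to $x$; since they converge to $x$ and are nonempty, $x$ lies in the closure of $X \setminus \bigcup\script{U}'_x$, in fact every neighbourhood of $x$ must meet this complement (it contains cofinitely many of the discarded $U^x_n$, each of which meets every neighbourhood of $x$ for $n$ large). Hence $\singleton{x} \cup \bigcup \script{U}'_x$ cannot be a neighbourhood of $x$, which is exactly thinness.

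So the steps, in order, are: (i) unpack the definition of thin into ``$x$ is a limit of clopen sets disjoint from $\bigcup\script{U}_x$''; (ii) fix a single co-infinite strictly increasing reindexing $\phi$ and pass to the corresponding refinement of the type; (iii) invoke Lemma~\ref{subuniversalsequences} to get that the refinement is still a universal sequence system; (iv) verify thinness using that the infinitely many omitted terms of each $\script{U}_x$ still converge to $x$. The only thing one has to be a little careful about is step (iv): one needs that the omitted terms genuinely cluster at $x$ — but since the \emph{whole} original sequence $\Sequence{U^x_n}:{n\in\N}$ converges to $x$, so does any infinite subsequence of it, in particular the sequence of omitted terms, and as these are nonempty this forces $x \in \closure{X \setminus \bigcup \script{U}'_x}$ and indeed that no neighbourhood of $x$ is contained in $\singleton{x} \cup \bigcup\script{U}'_x$. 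I expect this last verification to be the only place requiring genuine (though routine) topological argument; everything else is bookkeeping about refinements.
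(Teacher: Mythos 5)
Your proof is correct and takes essentially the same route as the paper: the paper's one-line proof passes to the refinement $\Sequence{T_{2n}}:{n\in\N}$, exactly your choice of a fixed co-infinite strictly increasing reindexing, with thinness following because the infinitely many omitted (nonempty, pairwise disjoint) terms still converge to $x$ and are disjoint from the retained union. Your step (iv) is precisely the ``easily seen'' verification the paper leaves implicit.
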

\begin{proof}
For a universal sequence system of type $\sequence{T_n}$, the refinement $\Sequence{T_{2n}}:{n \in \N}$ is easily seen to induce a thin universal sequence system.
\end{proof}


In the following two lemmas, we use the shorthand ``$x$ has a neighbourhood basis homeomorphic to $\bigoplus V_n \to x$" to mean: for every neighbourhood $U$ of $x$ there is $N \in \N$ and a clopen $V$ with $x \in V \subseteq U$ such that $V\setminus \singleton{x} \cong \bigoplus_{n > N} V_n$.

\begin{mylem}
\label{Rolfslemma}
Suppose $X$ is compact metrizable with a universal sequence of type $\sequence{T_n}$. Then $X$ has a universal sequence refinement $\sequence{T'_n}$ such that a dense collection of  points $x$ have neighbourhood basis homeomorphic to $\bigoplus T_n' \to x$.
\end{mylem}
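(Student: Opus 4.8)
The plan is to build the refinement by a careful diagonal/fusion argument that simultaneously arranges, for a countable dense set of points, that the tails of the refined universal sequences actually form a clopen neighbourhood base. Fix a countable dense set $D = \set{d_k}:{k \in \N} \subseteq X$ and a universal sequence system $\set{\script{U}_x}:{x \in X}$ of type $\sequence{T_n}$. The key observation is that for a fixed point $x$, the tails $\Set{x} \cup \bigcup_{n > N} U^x_n$ need not be open, but by the Second \v{S}ura-Bura Lemma~\ref{adaptsurabura} applied in the compact metrizable space $X$, every neighbourhood of $x$ contains a clopen neighbourhood $V$ of $x$; and since $U^x_n \to x$, cofinitely many $U^x_n$ sit inside $V$, while $V \setminus \Set{x}$ is itself compact metrizable. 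So one can hope to absorb the finitely many ``stray'' sets $U^x_n \not\subseteq V$, together with the rest of $V \setminus (\Set{x} \cup \bigcup U^x_n)$, into the terms of the sequence by replacing each $T_n$ by a suitable clopen superset inside $V$ — this is exactly the freedom a \emph{refinement} gives, since $T'_n \hookrightarrow T_{\phi(n)}$ need only embed as a clopen subset, so we may also enlarge.

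Concretely, first I would handle a single $d = d_k$: choose a decreasing clopen neighbourhood base $\sequence{V_m}$ of $d$ (possible by Lemma~\ref{adaptsurabura}). Intersecting the universal sequence $\script{U}_d$ with these clopen sets, discard the finitely many terms not inside $V_1$ and, for each $m$, note that $W_m := V_m \setminus (\Set{d} \cup \bigcup \script{U}_d)$ is clopen in $V_m \setminus \Set{d}$. Partition $W_m$'s contribution: each $U^d_n$ with $U^d_n \subseteq V_m \setminus V_{m+1}$ gets replaced by $U^d_n \cup (\text{the piece of } W_m \text{ between levels})$ — but these enlarged pieces are then no longer of a fixed type. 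The resolution is that we do not need a fixed type for the \emph{original} system; we need a single refined type $\sequence{T'_n}$ working for \emph{all} points simultaneously, yet the extra material we absorb varies with the point. The honest fix: instead of absorbing arbitrary clopen leftovers, use that a refinement may \emph{drop} terms, so pass to a subsequence $\sequence{T_{\phi(n)}}$ chosen so that the ``gaps'' can be filled by terms of the \emph{same} universal sequence at a different point. This is where the density hypothesis enters and why the conclusion is only claimed for a dense set of points rather than all of $X$.

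I expect the main obstacle to be precisely this tension: making the tails of the refined sequences genuinely open (a neighbourhood base) forces us to incorporate the ``complementary'' clopen pieces $V_m \setminus (\Set{x}\cup\bigcup\script{U}_x)$, but a refinement must have a point-independent type $\sequence{T'_n}$. The way through is a back-and-forth bookkeeping over the countable dense set: enumerate $D \times \N$ and at stage $(k, j)$ commit finitely many more terms of $\sequence{T'_n}$ and finitely many clopen sets in each $\script{U}'_{d_k}$, ensuring that after all stages the tail at each $d_k$ has shrunk below the $j$-th basic neighbourhood while the type stays coherent across all already-committed points; the sets assigned to $d_k$ that must ``catch'' leftover clopen mass are themselves clopen subsets of terms $T_{\phi(n)}$ (after enlarging $T'_n$ within its parent $T_{\phi(n)}$), which is legitimate for a refinement. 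Once the thin refined system is in hand, that a dense set of points get a neighbourhood base of the form $\bigoplus_{n>N} T'_n \to x$ follows because for those points the construction explicitly exhausted every basic neighbourhood. Verifying that the resulting $\sequence{T'_n}$ is simultaneously a refinement of $\sequence{T_n}$ and still a universal sequence of $X$ is then routine via Lemma~\ref{subuniversalsequences}.
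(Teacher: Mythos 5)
There is a genuine gap, and you have in fact put your finger on it yourself without resolving it. Your strategy is to fix the dense set $D$ in advance and then turn the tails of $\script{U}_{d_k}$ into a neighbourhood base at $d_k$ by absorbing the leftover clopen mass $V_m\setminus(\Set{d_k}\cup\bigcup\script{U}_{d_k})$ into the terms of the sequence. But a refinement, by the paper's definition, requires each $T'_n$ to embed as a clopen \emph{subset} of some $T_{\phi(n)}$ --- you may shrink and reindex, never enlarge --- so your remark that we ``may also enlarge'' misreads the definition. More importantly, for an arbitrarily prescribed point $d_k$ the leftover clopen pieces have completely uncontrolled homeomorphism type (in the geometric examples of Section~\ref{section253} a clopen neighbourhood of a point of $D$ can contain planar continua appearing in no $T_n$), so there is no reason they can be ``caught'' by clopen subsets of terms $T_{\phi(n)}$; your back-and-forth bookkeeping asserts this can be arranged but never argues it, and in general it cannot be. The lemma only claims the property for \emph{some} dense set of points, not a prescribed one, and that is exactly the freedom your argument fails to exploit.

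The paper's proof constructs the good points rather than prescribing them. Since every non-empty open set contains a clopen copy of a tail term of the universal sequence, one recursively builds a nested chain $F_1\supseteq F_2\supseteq\cdots$ of non-empty clopen sets with $F_k\cong T_{n_k}$ and $\diam{F_k}\to 0$. Compactness gives $\bigcap_k F_k=\Set{x}$ for a single point $x$, and then $F_k\setminus\Set{x}=\bigoplus_{j\ge k}(F_j\setminus F_{j+1})$, so the sets $T'_k:=F_k\setminus F_{k+1}$ --- which are clopen subsets of $F_k\cong T_{n_k}$ \emph{by construction}, hence a legitimate refinement --- give $x$ a clopen neighbourhood base of the required form, the $F_k$ themselves being a clopen base at $x$ because their diameters vanish. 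Density then comes for free: every non-empty open set contains a clopen copy of some $T_{n_k}\cong F_k$, hence a copy of the entire tail of the chain, hence such a point. That the refinement is still a universal sequence is Lemma~\ref{subuniversalsequences}. The essential idea your proposal is missing is this nesting of universal-sequence terms inside one another, which forces the ``leftovers'' to be of the controlled form $T_{n_k}\setminus T_{n_{k+1}}$ instead of arbitrary clopen debris near a pre-chosen point.
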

\begin{proof}
Every non-empty open set contains a clopen set from the tail of the universal sequence. Thus, by recursion, we can find a nested collection of non-empty clopen sets $F_k$ of vanishing diameter such that $F_k \cong T_{n_k}$ for a subsequence $\Sequence{n_k}:{k \in \N}$. 

Compactness and $\diam{F_k} \to 0$ imply that $\Intersection F_k = \singleton{x}=C(x)$ for some point $x \in X$. By the \v{S}ura-Bura Lemma, $x$ has a neighbourhood basis homeomorphic to $\bigoplus F_k \setminus F_{k+1} \to x$. Moreover, since every copy of $T_{n_k}$ contains a point with this property, there is a dense set of points $y$ with a neighbourhood looking like $\bigoplus F_k \setminus F_{k+1} \to y$.

Thus, the refinement $\Sequence{T'_k=T_{n_k} \setminus T_{n_{k+1}}}:{k \in \N}$ of $\sequence{T_n}$ is as required.
\end{proof}

Comparing the above lemma with Theorem \ref{lotsofreconstrutible}, we note that the points we construct are indeed one-point components and hence having a universal sequence implies that the set of one-point components is dense in $X$.

\begin{mylem}
\label{constructpointfixhomeo}
Every thin universal sequence system $\set{\script{U}_x}:{x \in X}$ of type $\sequence{T_n}$ in a compact metrizable space $X$, with the property that a dense collection of points $z$ has neighbourhoods looking like $\bigoplus T_n \to z$, is point-fixing.
\end{mylem}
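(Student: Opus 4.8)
The plan is to show that for any two points $x, y \in X$ with $\bigcup_{n>N}U^x_n \cap \bigcup_{n>N}U^y_n = \emptyset$ there is a homeomorphism $f_{xy}$ between the complements $X \setminus \bigcup_{n>N}U^x_n$ and $X \setminus \bigcup_{n>N}U^y_n$ that fixes both $x$ and $y$. The core idea is an absorption/back-and-forth argument: both complements are compact metrizable spaces that, near $x$ (respectively $y$), look like $\singleton{x} \cup \bigoplus_{n>M} T_n$ for suitable tails. Using thinness, removing the clopen tail $\bigcup_{n>N}U^x_n$ from $X$ leaves $x$ still with a neighbourhood basis of clopen sets (the \v{S}ura-Bura Lemma applies since $x$ is a $1$-point component), and by the density hypothesis we can arrange matching "local pictures" at $x$ and at $y$.

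First I would fix $N$ and set $A = X \setminus \bigcup_{n>N}U^x_n$, $B = X \setminus \bigcup_{n>N}U^y_n$. Note $A$ and $B$ are compact metrizable, and $x \in A$, $y \in B$ are $1$-point components (by the remark following Lemma~\ref{Rolfslemma}, or directly from thinness together with compactness). Now in $A$, the point $y$ still carries a sequence $\script{U}_y$ of disjoint clopen sets of type $\sequence{T_n}$ converging to it — these were untouched since $\bigcup\script{U}_x \cap \bigcup\script{U}_y = \emptyset$ — but $x$ has lost its distinguished sequence. The key step is to find, inside $A$, a \emph{new} sequence of disjoint clopen sets converging to $x$ whose members are copies of $T_n$ (a tail), exploiting the density hypothesis: arbitrarily small clopen neighbourhoods of $x$ decompose as $\bigoplus_{n>M} T_n$. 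Thus $x$ in $A$ and $y$ in $A$ both admit such local decompositions; symmetrically for $B$.

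The main construction is then a Cantor–Schröder–Bernstein–style exchange. I would build $f_{xy}\colon A \to B$ piecewise on clopen pieces: send a clopen neighbourhood $V_x \ni x$ in $A$, of the form $\singleton{x} \cup \bigoplus_{n>M_1} T_n$, to the clopen neighbourhood $W_x$ of $x$ in $B$ of the same form (with $f_{xy}(x) = x$); do likewise near $y$, sending a clopen $V_y \ni y$ in $A$ to a clopen $W_y \ni y$ in $B$ with $f_{xy}(y) = y$; and on the common "core" $A \setminus (V_x \cup V_y)$, which is a compact clopen set, use that — after peeling off the same tail of $T_n$'s on both sides — $A \setminus (V_x \cup V_y) \cong B \setminus (W_x \cup W_y)$. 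This last homeomorphism comes from observing that $A$ and $B$ differ from $X$ only by removing a clopen tail $\bigoplus_{n>N} T_n$, so both cores are obtained from the same space $X$ (minus neighbourhoods of $x$ and $y$) by adjoining/removing finitely many, then cofinitely many, copies of the $T_n$; the disjoint-sum bookkeeping, together with absorbing infinite tails, makes the two cores homeomorphic. Piecing these three clopen homeomorphisms together gives $f_{xy}$ fixing $x$ and $y$.

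The step I expect to be the main obstacle is the "core" homeomorphism $A \setminus (V_x \cup V_y) \cong B \setminus (W_x \cup W_y)$: one must track exactly which copies of the $T_n$ have been removed near $x$ versus near $y$ on each side and verify that, modulo absorbing infinite tails of identical clopen pieces, the finite discrepancies cancel. This is where thinness (to guarantee the tails stick out beyond any neighbourhood, so they are genuinely available for absorption) and the density-of-good-points hypothesis (to realize the $\bigoplus T_n$ local model at \emph{both} $x$ and $y$ inside \emph{both} $A$ and $B$) are used in tandem; everything else is routine gluing of clopen homeomorphisms.
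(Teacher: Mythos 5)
Your plan breaks down at its first step, and the breakdown is not repairable within your framework. You assume that $x$ and $y$ themselves admit arbitrarily small clopen neighbourhoods of the form $\{x\}\cup\bigoplus_{n>M}T_n$, attributing this to ``the density hypothesis''. But the hypothesis only provides a \emph{dense} set of points $z$ with this local model; the points $x$ and $y$ in the definition of point-fixing are arbitrary and in general have no such neighbourhoods (for instance $x$ may lie on a non-trivial component of $X$, with the blocks $U^x_n$ merely converging to it from outside, as happens in the spaces of Section~\ref{section253}). For the same reason, thinness does not make $x$ a one-point component of $A=X\setminus\bigcup_{n>N}U^x_n$: thinness only says that $\{x\}\cup\bigcup\mathcal{U}_x$ fails to be a neighbourhood of $x$. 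Consequently the clopen sets $V_x$, $W_x$, $V_y$, $W_y$ on which your three-piece gluing rests need not exist; and the remaining ``core'' homeomorphism $A\setminus(V_x\cup V_y)\cong B\setminus(W_x\cup W_y)$ --- which you flag as the main obstacle but do not prove --- is essentially the entire content of the lemma: one must transport the blocks of $\mathcal{U}_y$ (present in $A$, absent from $B$) over to the blocks of $\mathcal{U}_x$ (absent from $A$, present in $B$) while remaining continuous at both $y$ and $x$, and a direct exchange of tails destroys continuity at exactly those two points.

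The paper's proof is built around this difficulty. It uses thinness together with the density hypothesis to choose a bi-infinite chain of \emph{auxiliary} good points $z_k$ ($k\in\Z$), disjoint from everything previously chosen, with clopen neighbourhoods $V_k$ satisfying $V_k\to y$ as $k\to-\infty$ and $V_k\to x$ as $k\to+\infty$, each $V_k$ carrying a tail $\bigoplus_{n>n_{|k|}}T_n$ of the type. The homeomorphism then shifts each block of type $T_n$ one station along this chain --- from $U^y_n$ into the chain, through successive $V^{z_j}_n$, and out into $U^x_n$ --- with blocks of larger index $n$ entering and leaving the chain closer to $y$ and $x$ respectively. This guarantees that images of blocks near $y$ stay near $y$ and that preimages of blocks near $x$ stay near $x$, so the shift extends continuously by the identity on $x$, $y$ and the complement of the chain, and compactness upgrades the resulting continuous bijection to a homeomorphism. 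If you want to keep a Cantor--Schr\"oder--Bernstein flavour, this ``Hilbert hotel along a chain accumulating at both endpoints'' is the mechanism you must supply in place of your assumed local models at $x$ and $y$.
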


\begin{proof}
Let $x,y \in X$ arbitrary. Without loss of generality we may assume $\bigcup \script{U}_x \cap \bigcup \script{U}_y = \emptyset$. To see that $\set{\script{U}_x}:{x \in X}$ is point-fixing, we need to construct a homeomorphism $f\colon X \setminus \bigcup \script{U}_x \to X \setminus \bigcup \script{U}_y$ with $f(x)=x$ and $f(y)=y$.

By thinness, the space $Z_0 = X \setminus \p{\bigcup \script{U}_x \cup \bigcup \script{U}_y \cup \Set{x,y}}$ is a non-empty open subset of $X$, so by assumption there is $z_0$ and a clopen $V_{0}=\singleton{z_0} \cup \bigoplus_{n > n_0} V^{z_0}_n$ with $V^{z_0}_n \cong T_n$ for all $n>n_0$ such that $z_0 \in V_0 \subset Z_0$. Put $I_0=[0,n_0] \cap \N$ and consider a homeomorphism $$g_0 \colon \bigcup_{n \in I_0} U^y_n \mapsto \bigcup_{n \in I_0} U^x_n.$$ 
Next, consider the open subspace $Z_1 = X \setminus \p{\bigcup \script{U}_x \cup \bigcup \script{U}_y \cup V_{0} \cup \Set{x,y}}$. Again by thinness, there are $z_{-1}$ and $z_1$ with disjoint clopen neighbourhoods $V_{-1}=\singleton{z_{-1}} \cup \bigoplus_{n > n_1} V^{z_{-1}}_n$ and $V_{1}=\singleton{z_{1}} \cup \bigoplus_{n > n_1} V^{z_1}_n$ with $n_1 > n_0$ and  $V^{z_{-1}}_n \cong T_n \cong V^{z_{1}}_n$ for $n>n_1$ such that $V_{-1} \subset B_{\frac12}(y) \cap Z_1$ and $V_{1} \subset B_{\frac12}(x) \cap Z_1$. 
Put $I_1=(n_0,n_1] \cap \N$ and define 
$$g_1 \colon \bigcup_{n \in I_1} U^y_n \mapsto  \bigcup_{n \in I_1}  V^{z_0}_n \mapsto  \bigcup_{n \in I_1} U^x_n.$$ 
Continuing recursively, we get a double sequence $\set{z_k}:{k \in \Z}$ with disjoint clopen neighbourhoods $\set{V_k}:{k \in \Z}$ such that $ V_k \xrightarrow[k \to -\infty]{} y$ and $V_k \xrightarrow[k \to \infty]{} x$, and maps $g_{k}$ for intervals $I_{k}=(n_{k-1}, n_{k}] \cap \N$ sending
$$g_{k} \colon \bigcup_{n \in I_{k}} U^y_n \mapsto  \bigcup_{n \in I_{k}} V^{z_{1-k}}_n \mapsto \bigcup_{n \in I_{k}}  V^{z_{2-k}}_n \mapsto \cdots \mapsto \bigcup_{n \in I_{k}}  V^{z_{k-1}}_n \mapsto   \bigcup_{n \in I_{k}} U^x_n.$$

\begin{center}
\begin{figure}[ht]
\begin{tikzpicture}[>=stealth,scale=0.9]

\draw (-4,0) -- (-4.5,4) -- (-3.5,4) -- cycle;
\draw (-4.125,1) -- (-3.875,1);
\draw (-4.25,2) -- (-3.75,2);
\draw (-4.375,3) -- (-3.625,3);

\draw (4,0) -- (4.5,4) -- (3.5,4) -- cycle;
\draw (4.125,1) -- (3.875,1);
\draw (4.25,2) -- (3.75,2);
\draw (4.375,3) -- (3.625,3);

\draw (0,0) -- (0.375,3) -- (-0.375,3) -- cycle;
\draw (-0.125,1) -- (0.125,1);
\draw (-0.25,2) -- (0.25,2);

\draw (-1.5,0) -- (-1.75,2) -- (-1.25,2) -- cycle;
\draw (-1.625,1) -- (-1.375,1);

\draw (1.5,0) -- (1.75,2) -- (1.25,2) -- cycle;
\draw (1.625,1) -- (1.375,1);

\draw (-2.5,0) -- (-2.625,1) -- (-2.375,1) -- cycle;

\draw (2.5,0) -- (2.625,1) -- (2.375,1) -- cycle;

\draw[dotted] (-4,0) -- (-3.25,0);

\draw[dotted] (4,0) -- (3.25,0);

\draw[fill=red] (-4,0) circle (0.1);
\draw (-4,0) node[anchor=north]  {$y$};

\draw[fill=blue] (4,0) circle (0.1);
\draw (4,0) node[anchor=north]  {$x$};

\draw[fill=green] (0,0) circle (0.1);
\node[anchor=north] (0,0) {$z_0$};

\draw[fill=green] (-1.5,0) circle (0.1);
\draw (-1.5,0) node[anchor=north]  {$z_{-1}$};

\draw[fill=green] (-2.5,0) circle (0.1);
\draw (-2.5,0) node[anchor=north]  {$z_{-2}$};

\draw[fill=green] (-3.25,0) circle (0.1);
\draw (-3.25,0) node[anchor=north]  {$z_{-3}$};

\draw[fill=green] (1.5,0) circle (0.1);
\draw (1.5,0) node[anchor=north]  {$z_{1}$};

\draw[fill=green] (2.5,0) circle (0.1);
\draw (2.5,0) node[anchor=north]  {$z_{2}$};

\draw[fill=green] (3.25,0) circle (0.1);
\draw (3.25,0) node[anchor=north]  {$z_{3}$};

\draw[->] (-3.8,3.5) .. controls (0,4.25) .. (3.8,3.5);

\draw[->] (-3.9,2.5) .. controls (-2,2.75) .. (-0.1,2.5);

\draw[->] (0.1,2.5) .. controls (2,2.75) .. (3.9,2.5);

\draw[->] (-3.95,1.5) .. controls (-3,1.75) .. (-1.5,1.5);
\draw[->] (-1.45,1.5) .. controls (-0.75,1.75) .. (0,1.5);
\draw[->] (0.1,1.5) .. controls (0.75,1.75) .. (1.45,1.5);
\draw[->] (1.5,1.5) .. controls (3,1.75) .. (3.95,1.5);

\end{tikzpicture}
\setlength{\belowcaptionskip}{-10pt}
\caption{Constructing the maps $g_k$}
\end{figure}
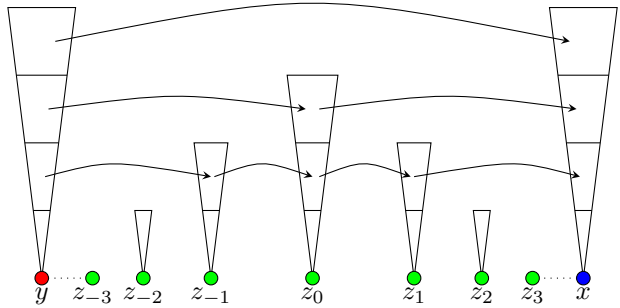
\end{center}

Once the recursion is completed, consider the closed sets
$$ A = \bigcup \script{U}_y \cup \bigcup_{n \in \Z} V_{n} \cup \Set{x,y} \quad \textnormal{and} \quad B = X \setminus \p{\bigcup \script{U}_x \cup \bigcup \script{U}_y \cup \bigcup_{n \in \Z} V_{n}}.$$
The resulting map $g=\bigcup g_k$ is continuous on $A$, and sends $g(y)=y$, $g(z_k)=z_{k+1}$ and $g(x)=x$. In particular, the partial maps $g |_A$ and $\operatorname{id} |_B$ agree on $A \cap B = \Set{x,y}$. Thus the map 
$f=g |_A \cup \operatorname{id} |_B \colon X \setminus \bigcup \script{U}_x \to X \setminus \bigcup \script{U}_y$ is a continuous bijection, and hence, by compactness, a homeomorphism.
\end{proof}

\section{Further reconstruction results}
\label{section5}

%
%
%
%

In this section we apply the Reconstruction Characterisation Theorem~\ref{recchartthm} to better understand the role of the set $C_1(X)$ of $1$-point components in the reconstruction problem. Theorem~\ref{lotsofreconstrutible} showed that if $X$ is compact, metrizable and non-reconstructible then $C_1(X)$ is a dense $G_\delta$ subset of $X$. Now we investigate conditions on $X$ or on $C_1(X)$ ensuring that the converse holds. 

The characterisation of non-reconstructibility for compact metrizable spaces seems to depend crucially on metrizability. The results of this section also yield interesting sufficient conditions for non-reconstructibility in general compact Hausdorff spaces.

Recall that a  space $X$ is  \emph{h-homogeneous}, or  \emph{strongly homogeneous}, if every non-empty clopen subset of $X$ is homeomorphic to $X$. 
\begin{mythm}
\label{stronghomogeneousnonrec}
Every h-homogeneous, first-countable, compact space  in which the $1$-point components are dense is non-reconstructible.
\end{mythm}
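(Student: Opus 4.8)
The plan is to apply the Reconstruction Characterisation Theorem~\ref{recchartthm}, and in fact the more hands-on Lemma~\ref{thm:definitiveNonRec}, by exhibiting a constant universal sequence system on $X$ that is point-fixing (or at least complement-equivalent with augmentation). Since $X$ is $h$-homogeneous, every non-empty clopen subset is homeomorphic to $X$, so if we can produce \emph{any} sequence of disjoint non-empty clopen sets converging to each point, that sequence is automatically of constant type $\sequence{X}$, and all the homeomorphisms one needs to build will be between clopen sets that are all copies of $X$ — which should make the bookkeeping much easier than in the general metrizable case.

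First I would check that each point $x\in X$ is the limit of a sequence of pairwise disjoint non-empty clopen sets. This is where density of the $1$-point components enters: fix a countable decreasing neighbourhood base $\sequence{V_n}$ at $x$ with $\closure{V_{n+1}}\subsetneq V_n$ (first countability plus, say, regularity from compact Hausdorff). Because $C_1(X)$ is dense, each annular open set $V_n\setminus\closure{V_{n+1}}$ contains a $1$-point component, and by the Second \v{S}ura--Bura Lemma~\ref{adaptsurabura} that component — being a single point — has a clopen neighbourhood contained in $V_n\setminus\closure{V_{n+1}}$; shrinking if necessary (again via \v{S}ura--Bura, or just taking a proper clopen subset, which exists since $X$ has no isolated points as a dense set of $1$-point components forbids them in an $h$-homogeneous infinite space) we get a non-empty clopen $B_n^x\subseteq V_n\setminus\closure{V_{n+1}}$. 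The $B_n^x$ are pairwise disjoint and converge to $x$, and each $B_n^x\cong X$. Thus $X$ has a constant universal sequence of type $\sequence{X}$, and after passing to the thin refinement (Lemma~\ref{thinuniversalsequencesystem}, or simply arranging from the start that $\Set{x}\cup\bigcup_n B_n^x$ omits some $V_m$, which is automatic since the $B_n^x$ avoid a neighbourhood of $x$... actually they accumulate only at $x$, so one still wants thinness — take every second term) we have a thin universal sequence system.

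Next I would upgrade thinness to point-fixing via Lemma~\ref{constructpointfixhomeo}. That lemma needs a dense set of points $z$ with a neighbourhood basis looking like $\bigoplus T_n\to z$; here $T_n\cong X$ for all $n$, and the construction in the previous paragraph shows \emph{every} $1$-point component (hence a dense set of points) has exactly such a neighbourhood basis — indeed the $B_n$'s we built at any point $x\in C_1(X)$ witness this directly. So Lemma~\ref{constructpointfixhomeo} applies and gives a point-fixing universal sequence system. Alternatively, and perhaps more cleanly, one invokes Lemma~\ref{Rolfslemma} to pass to a refinement with the dense-neighbourhood-basis property, then Lemma~\ref{constructpointfixhomeo}. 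Either way, by Lemma~\ref{pointfixing} the system is complement-equivalent with augmentation, and then Lemma~\ref{thm:definitiveNonRec} (compact spaces are pseudocompact) yields that $X$ is non-reconstructible.

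The main obstacle, or rather the main place where care is needed, is the transition from "sequence of disjoint clopen sets converging to $x$" to a \emph{thin} and \emph{point-fixing} system — i.e.\ making sure the machinery of Lemmas~\ref{thinuniversalsequencesystem}--\ref{constructpointfixhomeo} is genuinely applicable. Concretely, one must verify that the dense set of points with the prescribed neighbourhood basis is really dense (this uses $h$-homogeneity to know the clopen pieces are copies of $X$ and hence themselves contain such points, together with density of $C_1(X)$), and that first countability is enough to run the $\v{S}ura$--Bura clopen-shrinking argument inside each $V_n\setminus\closure{V_{n+1}}$. Once those routine verifications are in place, the result is a direct application of the already-established reconstruction machinery.
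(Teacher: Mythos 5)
Your first step is sound and matches the paper: density of $C_1(X)$ plus the Second \v{S}ura--Bura Lemma gives, at each $x$, pairwise disjoint non-empty clopen sets $B^x_n \subseteq V_n \setminus \closure{V_{n+1}}$ converging to $x$, and $h$-homogeneity forces each $B^x_n \cong X$; this is exactly the combination of $\pi$-homogeneity with Lemma~\ref{pihomsequences} that yields a constant universal sequence. The gap is in your second step. The theorem is stated for first-countable \emph{compact} spaces, not compact \emph{metrizable} ones --- the point of this part of the paper is precisely to reach beyond the metrizable setting --- but Lemmas~\ref{Rolfslemma} and~\ref{constructpointfixhomeo}, which you invoke to upgrade your thin system to a point-fixing one, are stated and proved only for compact metrizable $X$: the proof of Lemma~\ref{constructpointfixhomeo} places the auxiliary points $z_k$ inside metric balls $B_{\frac12}(x)$ and $B_{\frac12}(y)$ to force $V_k \to x$ and $V_k \to y$, and Lemma~\ref{Rolfslemma} uses clopen sets of vanishing diameter. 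As written, your argument therefore only proves the metrizable case. A secondary, smaller point: even there, the $B^x_n$ do not "directly" witness the hypothesis of Lemma~\ref{constructpointfixhomeo}, which asks for clopen $V \ni z$ with $V \setminus \singleton{z} \cong \bigoplus_{n>N} T_n$; one must actually decompose a deleted clopen neighbourhood of $z$ into a disjoint sum of copies of $X$ (e.g.\ via a strictly decreasing clopen neighbourhood base $W_n$ of $z$ with $\Intersection_n W_n = \singleton{z}$ and the differences $W_n \setminus W_{n+1}$), not merely exhibit a convergent sequence of clopen sets.

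The fix is the one you half-notice when you remark that constancy "should make the bookkeeping much easier," and it is what the paper does (Theorem~\ref{supernonreconstruction}): for a \emph{constant} universal sequence system none of the Section~4 machinery is needed. Pass to $\script{U}'_x=\Sequence{U^x_{2n}}:{n \in \N}$ and define $X \setminus \bigcup \script{U}'_x \to X \setminus \bigcup \script{U}'_y$ by sending $U^x_{2n+1}$ homeomorphically onto $U^x_n$, sending $U^y_n$ homeomorphically onto $U^y_{2n+1}$, and taking the identity elsewhere; since every set involved is a copy of the same space, this is a well-defined homeomorphism fixing $x$ and $y$, so the system is point-fixing, and Lemmas~\ref{pointfixing} and~\ref{thm:definitiveNonRec} finish the proof. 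This explicit interleaving argument uses only pseudocompactness and Hausdorffness, so it delivers the theorem at its stated generality.
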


 A collection $\script{B}$ of open sets is called a $\pi$-base for $X$ if for every open set $U \subset X$ there is $B \in \script{B}$ such that $B \subset U$. We call a space $\pi$\emph{-homogeneous} if it has a clopen $\pi$-base of pairwise homeomorphic elements.  If a space $X$ is $h$-homogeneous and has a dense set of $1$-point components then  the Second \v{S}ura-Bura Lemma~\ref{adaptsurabura} implies that $X$ is $\pi$-homogeneous. Also note that every non-trivial $h$-homogenous space has no isolated points. The following observation relating $\pi$-homogeneity and universal sequences in the realm of first-countable spaces is straightforward.
\begin{mylem}
\label{pihomsequences}
Let $X$ be a first-countable Hausdorff space without isolated points. Then $X$ is $\pi$-homogeneous if and only if $X$ has a constant universal sequence. \qed
\end{mylem}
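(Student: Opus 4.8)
The plan is to prove the two directions separately. The forward implication (from $\pi$-homogeneity to a constant universal sequence) is where the hypotheses bite: first-countability provides a countable neighbourhood basis at each point, so that the disjoint clopen sets can be indexed by $\N$, and the absence of isolated points is exactly what guarantees that the open sets produced in the construction are nonempty. The reverse implication is just a matter of unwinding the definitions, and needs neither hypothesis (in fact a universal sequence with nonempty terms already forces the absence of isolated points).

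For the forward direction, I would fix a clopen $\pi$-base $\mathcal{B}$ of pairwise homeomorphic members, and let $T$ denote their common homeomorphism type, which is nonempty since $\pi$-base members are nonempty. Given $x \in X$, use first-countability to fix a decreasing sequence $\langle W_n : n \in \N\rangle$ of open neighbourhoods forming a basis at $x$ (passing to finite intersections if necessary). Then I recursively select disjoint clopen sets $B_n \in \mathcal{B}$ with $B_n \subseteq W_n$ and $x \notin B_n$: having chosen $B_1, \dots, B_{n-1}$, the finite union $K = B_1 \cup \dots \cup B_{n-1}$ is clopen, so $(W_n \setminus K) \setminus \{x\}$ is open (here $\{x\}$ is closed as $X$ is Hausdorff) and nonempty (here $x$ is not isolated), hence contains some member of $\mathcal{B}$, which we take as $B_n$. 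Since the $W_n$ decrease to a basis at $x$, the sequence $\langle B_n : n \in \N\rangle$ converges to $x$; as $x$ was arbitrary and each $B_n \cong T$, this exhibits a constant universal sequence of type $\langle T\rangle$.

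For the reverse direction, suppose each $x \in X$ is the limit of a sequence $\langle B^x_n : n \in \N\rangle$ of disjoint clopen sets all homeomorphic to a fixed $T$. I claim $\mathcal{B} = \{B^x_n : x \in X,\ n \in \N\}$ is the required clopen $\pi$-base: its members are clopen and pairwise homeomorphic by construction, and given any nonempty open $U \subseteq X$ one picks $x \in U$ and, since $B^x_n \to x$, an index $n$ with $B^x_n \subseteq U$; then $B^x_n$ is a nonempty clopen subset of $U$ lying in $\mathcal{B}$. Hence $X$ is $\pi$-homogeneous.

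I do not expect a genuine obstacle — the paper rightly calls the lemma ``straightforward''. The only point demanding a little care is the bookkeeping in the forward recursion: one must simultaneously keep the selected sets pairwise disjoint, keep them away from $x$, and still fit the $n$-th of them inside $W_n$, and one must invoke the no-isolated-points hypothesis at precisely the step that delivers the nonempty open set into which a $\pi$-base member is inserted. A minor degenerate case worth a parenthetical remark is that the statement presupposes the clopen sets of a universal sequence to be nonempty (equivalently $T \neq \emptyset$); with that convention, everything above goes through verbatim.
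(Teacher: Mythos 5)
Your argument is correct and is precisely the straightforward proof the paper omits (the lemma is stated with a \qed and no written proof): the forward direction via a recursive selection of disjoint $\pi$-base members inside a decreasing neighbourhood basis, using non-isolatedness to keep the relevant open sets nonempty, and the reverse direction by collecting all terms of the universal sequences into a $\pi$-base. Your parenthetical about requiring the terms (equivalently $T$) to be nonempty matches the paper's implicit convention, so nothing further is needed.
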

To prove Theorem~\ref{stronghomogeneousnonrec} it remains to verify the following result which is of interest in its own right.
\begin{mythm}
\label{supernonreconstruction}
Every pseudocompact Hausdorff space with a constant universal sequence is non-reconstructible.
\end{mythm}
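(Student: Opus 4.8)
The plan is to reduce Theorem~\ref{supernonreconstruction} to Lemma~\ref{thm:definitiveNonRec} by producing, from a constant universal sequence of type $\sequence{T}$, a point-fixing universal sequence system (which by Lemma~\ref{pointfixing} is complement-equivalent with augmentation, and then Lemma~\ref{thm:definitiveNonRec} finishes the job since $X$ is pseudocompact). The crucial feature of the constant case is that the terms $U^x_n$ are all homeomorphic to the single space $T$; this gives us the freedom to split, regroup and rechain them essentially like the edges of a combinatorial bookkeeping problem, without worrying about matching individual homeomorphism types as one must do in the metrizable case handled in Section~4.

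First I would pass to a refinement to arrange thinness: replacing each $\mathcal{U}_x = \sequence{U^x_n : n \in \N}$ by the sequence obtained by splitting each $U^x_n$ into two disjoint clopen pieces each homeomorphic to $T$ — using $h$-homogeneity of $T$ is not available here, so instead I would split $U^x_n \cong T$ into, say, $U^x_{2n}$ and $U^x_{2n+1}$ only when $T$ itself is disconnected; if $T$ happens to be connected I would instead just drop to the subsequence of even-indexed terms $\sequence{U^x_{2n}}$, whose union is not a neighbourhood of $x$. Either way, Lemma~\ref{subuniversalsequences} guarantees the refinement is still a (constant, if we keep it so) universal sequence system, and after this step $\Set{x} \cup \bigcup \mathcal{U}_x$ is not a neighbourhood of $x$ for any $x$. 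So we may assume the system is thin.

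Next, given $x,y \in X$ — assumed after discarding an initial segment to satisfy $\bigcup \mathcal{U}_x \cap \bigcup \mathcal{U}_y = \emptyset$ — I would build a homeomorphism $f \colon X \setminus \bigcup \mathcal{U}_x \to X \setminus \bigcup \mathcal{U}_y$ fixing $x$ and $y$, exactly mimicking the recursive construction in the proof of Lemma~\ref{constructpointfixhomeo}, but substituting the role of the ``dense set of points with neighbourhood $\bigoplus T_n \to z$'' by points $z$ with neighbourhoods of the form $\Set{z} \cup \bigoplus_{n} W^z_n$ where $W^z_n \cong T$: such points exist densely because $X$ is first-countable, pseudocompact, $\pi$-homogeneous (being a constant-universal-sequence space, via Lemma~\ref{pihomsequences}), and has dense $1$-point components — indeed every clopen copy of $T$ inside $X$ contains such a point by the same \v{S}ura-Bura argument used in Lemma~\ref{Rolfslemma}. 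Since all the blocks $U^x_n$, $U^y_n$, $W^{z_k}_n$ are pairwise homeomorphic (all $\cong T$), the chain of homeomorphisms $g_k \colon \bigcup_{n \in I_k} U^y_n \to \dots \to \bigcup_{n \in I_k} U^x_n$ can be assembled with no type-matching constraint at all — this is where the constant type makes everything easier. The double sequence $\sequence{z_k : k \in \Z}$ of auxiliary points with their pairwise disjoint clopen copies $V_k \to y$ (as $k \to -\infty$) and $V_k \to x$ (as $k \to \infty$) is produced by the same recursion, using thinness at each stage to keep enough room, and convergence is enforced by shrinking into balls $B_{1/k}$ — here replaced, since $X$ need only be first-countable, by the $k$-th neighbourhood from a fixed countable base at $x$ and at $y$. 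Gluing $g = \bigcup_k g_k$ on the closed set $A = \bigcup \mathcal{U}_y \cup \bigcup_{k} V_k \cup \Set{x,y}$ with the identity on the complementary closed set $B$, which agree on $A \cap B = \Set{x,y}$, yields a continuous bijection; pseudocompactness together with the Hausdorff property lets us conclude it is a homeomorphism (a continuous bijection from a pseudocompact space to a Hausdorff space need not be closed in general, so here I would instead note that $X \setminus \bigcup \mathcal{U}_x$ is actually compact — it is closed in $X$ — hence the standard compact-to-Hausdorff argument applies).

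The main obstacle I anticipate is the last point: ensuring the continuous bijection $f$ is genuinely a homeomorphism when $X$ is only pseudocompact rather than compact. The clean way around it is to observe that for a thin universal sequence system the complements $X \setminus \bigcup \mathcal{U}_x$ and the pieces on which $g$ and $\mathrm{id}$ are defined are closed subsets of the pseudocompact space $X$, and to check directly that $f$ carries each of the two closed pieces $A$, $B$ homeomorphically onto its image (each being an explicit countable clopen-block decomposition plus two limit points, with $g_k$ and $\mathrm{id}$ manifestly homeomorphisms onto their images), so that $f$, being a bijection that is a homeomorphism on each of two closed sets covering the domain, is itself a homeomorphism — no compactness of the whole space is needed, only that $A$ and $B$ are closed and their images are closed, which follows because $\bigcup \mathcal{U}_y$ together with $\Set{y}$ is closed and similarly on the $x$ side. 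A secondary, purely bookkeeping, obstacle is verifying that the recursive choice of the $z_k$ and $n_k$ can always be carried out, i.e.\ that after removing finitely many clopen blocks the remaining set is still a non-empty open set containing a clopen copy of $T$ arbitrarily close to $x$ and to $y$; this is immediate from thinness (so $\Set{x} \cup \bigcup \mathcal{U}_x$ misses a neighbourhood of $x$) together with $\pi$-homogeneity providing copies of $T$ cofinally in every neighbourhood filter.
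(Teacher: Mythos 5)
There is a genuine gap: the theorem is stated for arbitrary pseudocompact Hausdorff spaces, but your construction silently imports tools that are only available in the compact metrizable setting of Section~4. Concretely: (a) you need a dense set of points $z$ with clopen neighbourhoods of the form $\Set{z}\cup\bigoplus_n W^z_n$, $W^z_n\cong T$, and you justify this by ``the same \v{S}ura-Bura argument used in Lemma~\ref{Rolfslemma}'' --- but that argument intersects a nested sequence of clopen sets of \emph{vanishing diameter} in a \emph{compact} space to produce a one-point component with the desired neighbourhood base; neither a metric nor compactness is available here, and in a general pseudocompact Hausdorff space such points need not exist; (b) you replace the metric balls $B_{1/k}(x)$ controlling the convergence $V_k\to x$, $V_k\to y$ by ``the $k$-th neighbourhood from a fixed countable base at $x$ and at $y$'' --- first countability is not a hypothesis of the theorem; (c) your remark that $X\setminus\bigcup\mathcal{U}_x$ ``is actually compact --- it is closed in $X$'' is false: closed subsets of a non-compact pseudocompact space need not be compact. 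A smaller slip: in the thinness step, splitting $U^x_n\cong T$ into two clopen pieces each homeomorphic to $T$ is not possible in general (take $T$ to be a two-point discrete space); fortunately your other branch --- passing to the even-indexed subsequence --- works unconditionally and is exactly Lemma~\ref{thinuniversalsequencesystem}, so the case distinction should simply be dropped.

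The idea you are missing is that for a \emph{constant} type the entire apparatus of auxiliary points $z_k$ and chained maps $g_k$ is unnecessary, which is precisely why the theorem holds in this generality. You already observe that ``the chain of homeomorphisms can be assembled with no type-matching constraint at all''; push this one step further: since every block is a copy of $T$, you can shift blocks \emph{within} $\mathcal{U}_x$ and $\mathcal{U}_y$ themselves. This is the paper's proof: set $\script{U}'_x=\Sequence{U^x_{2n}}:{n\in\N}$ and define $X\setminus\bigcup\script{U}'_x\to X\setminus\bigcup\script{U}'_y$ by sending $U^x_{2n+1}$ homeomorphically onto $U^x_n$, sending $U^y_n$ homeomorphically onto $U^y_{2n+1}$, and taking the identity elsewhere. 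This map visibly fixes $x$ and $y$, its inverse has the same form, and continuity at $x$ and $y$ follows directly from the convergence $U^x_n\to x$, $U^y_n\to y$; no thinness, density, first countability, metric or compactness enters. The resulting point-fixing system feeds into Lemmas~\ref{pointfixing} and~\ref{thm:definitiveNonRec} exactly as you intend. Your route, even after repairs, would only prove the theorem for compact metrizable $X$, where it is already subsumed by Proposition~\ref{recchartlemma}.
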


\begin{proof}
If $X$ has a universal sequence system $\set{\script{U}_x}:{x \in X}$ of constant type then $\script{U}'_x=\Sequence{U^x_{2n}}:{n \in \N}$ gives a point-fixing universal sequence system. Indeed, the map $ X \setminus \p{\bigcup \script{U}'_x} \to  X \setminus \p{\bigcup \script{U}'_y} $ sending $U^x_{2n+1}$ homeomorphically to $U^x_{n}$ and $U^y_{n}$ homeomorphically to $U^y_{2n+1}$, and being the identity elsewhere (in particular: $x \mapsto x$, $y \mapsto y$) is a homeomorphism. So the claim follows from Lemma~\ref{thm:definitiveNonRec}.
\end{proof}

%

An example of a pseudocompact non-compact space with a constant universal sequence is mentioned in \cite[p.19]{matveev}. Start with the usual (pseudocompact) $\Psi$-space on $\w$ \cite[3.6.I]{Eng}, and replace every isolated point by a clopen copy of the Cantor set. Every $\Psi$-space is first-countable, so this gives a pseudocompact space with a constant universal sequence of type $\sequence{C}$. 

Now we consider the case when $C_1(X)$ is not just a $G_\delta$ set, but is open or contains a non-empty open subset (in other words, is dense but not co-dense). 
\begin{mythm}
\label{tricktrick}
For a compact metrizable space $X$ in which $C_1(X)$ forms a dense $G_\delta$ with non-empty interior the following are equivalent:
\begin{enumerate}
\item $X$ is not reconstructible,

\item $X$ has a constant universal sequence (of type $\sequence{C}$),

\item the interior of $C_1(X)$ contains no isolated points and is dense in $X$, and

\item $X$ is a compactification of $C \setminus \{0\}$.
\end{enumerate}
\end{mythm}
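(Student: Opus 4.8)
The plan is to prove Theorem~\ref{tricktrick} by establishing the cycle of implications $(4)\Rightarrow(3)\Rightarrow(2)\Rightarrow(1)\Rightarrow(4)$, exploiting the fact that $(2)\Rightarrow(1)$ is already available as a special case of Theorem~\ref{supernonreconstruction} (constant universal sequences imply non-reconstructibility) and that $(1)\Rightarrow(2)$ in the presence of a dense $G_\delta$ of $1$-point components with non-empty interior can be extracted from the Reconstruction Characterisation Theorem~\ref{recchartthm} together with the structure theory of Section~\ref{section5}. So the genuinely new content lies in the two ``geometric'' implications $(3)\Leftrightarrow(4)$ and in upgrading a bare universal sequence to a \emph{constant} one of type $\sequence{C}$.

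For $(4)\Rightarrow(3)$: if $X$ is a compactification of $C\setminus\{0\}$, then $X$ contains a dense copy $Y$ of $C\setminus\{0\}$ as an open subset (open because $Y$ is locally compact and dense in the Hausdorff space $X$), and $Y\subseteq C_1(X)$ since $Y$ itself is totally disconnected and open, so each of its points is a $1$-point component of $X$. Then $Y\subseteq \interior{C_1(X)}$, and $Y$ is dense in $X$ with no isolated points (as $C\setminus\{0\}$ has none), so $\interior{C_1(X)}$ is dense in $X$ and has no isolated points. For $(3)\Rightarrow(2)$: let $W=\interior{C_1(X)}$, a non-empty open dense subset of $X$ without isolated points. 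Being an open subspace of a compact metrizable space, $W$ is locally compact, separable, metrizable, zero-dimensional (each point is its own component of $X$, hence of $W$) and without isolated points; by the standard characterisation this makes $W$ homeomorphic to $C\setminus\{0\}$, equivalently to a clopen-in-itself sum $\bigoplus_{n\in\N} C$. Now, using the Second \v Sura-Bura Lemma~\ref{adaptsurabura} and the density of $W$, every point $x\in X$ has a neighbourhood basis of clopen sets $V$, each meeting $W$ in a dense open subset; inside each such $V$ one can peel off a sequence of pairwise disjoint clopen subsets $B_n\subseteq V\cap W$ of vanishing diameter converging to $x$, each $B_n$ clopen in $X$, locally compact, separable, metrizable, zero-dimensional and without isolated points, hence $B_n\cong C$. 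This exhibits a constant universal sequence of type $\sequence{C}$.

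For $(1)\Rightarrow(4)$: assume $X$ is non-reconstructible. By Proposition~\ref{universalsequence} it has a universal sequence, so $C_1(X)$ is dense (as remarked after Lemma~\ref{Rolfslemma}); we are given $W=\interior{C_1(X)}\neq\emptyset$. The space $W$ is open, locally compact, separable, metrizable and zero-dimensional. I claim $W$ has no isolated points: an isolated point of $W$ would be an isolated point of $X$, but by Theorem~\ref{lotsofreconstrutible} a non-reconstructible compact metrizable space has $C_1(X)$ a dense $G_\delta$ without isolated points, and more directly, a compact space with an isolated point is reconstructible by \cite{recpaper}. So $W\cong C\setminus\{0\}$. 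It remains to see that $W$ is dense in $X$; if not, there is a non-empty clopen $K\subseteq X$ with $K\cap W=\emptyset$, i.e.\ $\interior{C_1(K)}=\emptyset$ while $C_1(K)$ is still a dense $G_\delta$ in $K$ (it is the trace of $C_1(X)$, which is dense by the above). But then $K$ is a reconstructible clopen summand forcing reconstructibility of $X$ --- here one argues that a universal sequence for $X$ restricted to points of $K$ would have to use clopen pieces eventually inside $K$, and since $K$ has dense-but-co-dense $1$-point components the non-reconstruction machinery of Lemma~\ref{thm:definitiveNonRec} still applies to produce a reconstruction, contradicting...

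\emph{Main obstacle.} The delicate point is precisely the last step: showing that $C_1(X)$ being a dense $G_\delta$ with non-empty interior, \emph{together with non-reconstructibility}, forces $\interior{C_1(X)}$ to be dense (equivalently $X$ to be a compactification of $C\setminus\{0\}$ rather than, say, having a clopen summand in which $C_1$ is co-dense). The clean way to handle this is to avoid the contradiction argument above and instead run $(1)\Rightarrow(3)$ directly: take a universal sequence system and apply Lemma~\ref{Rolfslemma} to get a refinement witnessing that a dense set of points has clopen neighbourhoods that are sums $\bigoplus T'_n\to x$ with each $T'_n$ clopen in $X$; since these neighbourhoods lie in the dense open set $W=\interior{C_1(X)}$ for a dense set of base points, and $W$ is zero-dimensional without isolated points, one shows $W$ is dense by noting that the clopen algebra generated by all the $T'_n$ across all points already forms a $\pi$-base of subsets of $W$. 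I expect the bookkeeping that every relevant clopen piece genuinely sits inside $\interior{C_1(X)}$ (and not merely inside $C_1(X)$) to be the technical heart; everything else is assembling known results.
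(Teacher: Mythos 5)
Your cycle $(4)\Rightarrow(3)\Rightarrow(2)\Rightarrow(1)\Rightarrow(4)$ is fine except at the one place where the theorem has real content: the implication $(1)\Rightarrow(4)$ (equivalently $(1)\Rightarrow(3)$), i.e.\ that non-reconstructibility forces $\interior{C_1(X)}$ to be \emph{dense}. Your first attempt at this ends mid-sentence, and the contradiction you are reaching for (``a reconstructible clopen summand forces reconstructibility of $X$'') is not a valid principle. Your fallback via Lemma~\ref{Rolfslemma} does not close the gap either: the pieces $T'_n$ produced there are just clopen subsets of $X$, and there is no reason they should lie in $C_1(X)$, let alone in its interior (in the geometric examples of Section~\ref{section253} every such piece contains non-trivial continua). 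Asserting that ``every relevant clopen piece genuinely sits inside $\interior{C_1(X)}$'' is assuming exactly what has to be proved.

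The missing idea is to exploit the \emph{uniformity} of a universal sequence: the same type $\sequence{T_n}$ must converge to every point of $X$. First, $X$ has no isolated points (a compact space with an isolated point is reconstructible), so every non-empty clopen subset of $X$ contained in $\interior{C_1(X)}$ is compact, metrizable, zero-dimensional and without isolated points, i.e.\ a Cantor set. Pick any $x\in\interior{C_1(X)}$; since $B_n^x\to x$ and $\interior{C_1(X)}$ is open, the $B_n^x$ eventually lie inside $\interior{C_1(X)}$, so $T_n\cong C$ for all large $n$. Now take any $y\in X$: eventually $B_n^y\cong C$, and a clopen Cantor subset of $X$ consists of $1$-point components and is open, hence is contained in $\interior{C_1(X)}$; since $B_n^y\to y$ this gives $y\in\closure{\interior{C_1(X)}}$. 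Thus $\interior{C_1(X)}$ is dense. This is precisely the paper's argument, stated there as the contrapositive: if $\interior{C_1(X)}$ is non-empty but not dense, no universal sequence can exist, so $X$ is reconstructible by Theorem~\ref{recchartthm}. Your remaining implications $(4)\Rightarrow(3)\Rightarrow(2)\Rightarrow(1)$ match the paper and are correct.
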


\begin{proof} First we show the equivalence of (1), (2) and (3). 
If the interior of $C_1(X)$ is non-empty and not dense in $X$, there cannot exist a universal sequence: any sequence of disjoint clopen sets converging to a point in the interior of $C_1(X)$ is eventually of type $\sequence{C}$, which is not the case for a point outside the closure of $\interior{C_1(X)}$. Thus, $X$ is reconstructible by the Reconstruction Characterisation Theorem, so (1) fails. Equivalently, (1) implies (3).

If (3) holds, the clopen sets of $X$ homeomorphic to the Cantor set form a $\pi$-base for $X$. Thus, $X$ has a constant universal sequence of type $\sequence{C}$ by Lemma~\ref{pihomsequences} and (2) holds. And (2) implies (1)  by Theorem~\ref{supernonreconstruction}.

Now we verify (3) implies (4). Let $U$ be an open dense, proper subset of $X$ contained in $C_1(X)$. Then $U$ is a zero-dimensional, locally compact, non-compact metrizable space without isolated points, and hence homeomorphic to $C\setminus\{0\}$ \cite[6.2.A(c)]{Eng}, and $X$ is a compactification of this latter set.

It remains to show (4) implies (3). But if $X$ contains a dense subset $A$ homeomorphic to $C\setminus \{0\}$, then $A$ is open by local compactness, dense, without isolated points, and contained in $C_1(X)$.
\end{proof}

\section{Building non-reconstructible compact spaces}
\label{section6}

This final section contains examples of compact metrizable spaces illustrating the frontier between reconstructibility and non-reconstructibility. One objective is to present a broad variety of non-reconstructible compact metrizable spaces. A second objective is to show that some natural strengthenings of the Reconstruction Characterisation do not hold.

Consider a compact metrizable space $X$ without isolated points. 
We know that if $X$ is not reconstructible then $C_1(X)$, the set of $1$-point components in $X$, is a dense $G_\delta$. If $X$ is h-homogeneous then density of $C_1(X)$ suffices for non-reconstructibility (via a constant universal sequence). And non-reconstructibility (again via constant universal sequences) also follows if $C_1(X)$ is a dense \emph{open} $G_\delta$. 
Certain questions now arise:
\begin{enumerate}
\item Is $X$  non-reconstructible if (and only if) $C_1(X)$ is a dense $G_\delta$?  
\item If $X$ is non-reconstructible then is $C_1(X)$ a dense open (or dense, not co-dense) set? Equivalently, is $X$ non-reconstructible if and only if it is the compactification of $C\setminus \{0\}$?
\item If $X$ is non-reconstructible, must it have a \emph{constant} universal sequence?
\end{enumerate}
We give negative answers to all these below, providing h-homogeneous examples where possible. 

\subsection{Non-reconstructible spaces with open dense $C_1(X)$: Compactifications of $C \setminus \singleton{0}$}\label{section61}  

There are many non-reconstructible spaces $X$ where $C_1(X)$ is dense but not co-dense.  
\begin{mylem}
\label{cantorminuspointrec}
For every compact metrizable space $K$ there is a compact metrizable space $X=X_K$ with an open dense set $U$ homeomorphic to $C \setminus \{0\}$ such that $X \setminus U$ is homeomorphic to $K$. 
\end{mylem}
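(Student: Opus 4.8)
The plan is to realise $X_K$ as a concrete compactification of $C\setminus\{0\}$ whose remainder is a copy of $K$, built by stacking Cantor-set approximations of $K$ at heights $1/n$. Fix an embedding $K\subseteq Q$, where $Q=[0,1]^{\N}$ is the Hilbert cube with a fixed compatible metric $\rho$, and work inside the compact space $Q\times[0,1]$. I would produce Cantor sets $D_n\subseteq Q$ whose Hausdorff distance to $K$ is less than $1/n$, and set
\[ X \;=\; \p{K\times\Set{0}}\;\cup\;\Union_{n\ge 1}\p{D_n\times\Set{1/n}}\;\subseteq\;Q\times[0,1], \qquad U\;=\;\Union_{n\ge 1}\p{D_n\times\Set{1/n}}. \]
Heuristically, the levels $D_n\times\{1/n\}$ are clopen Cantor pieces whose union is the copy of $C\setminus\{0\}$, and they ``converge onto'' $K\times\{0\}$, which becomes the remainder.

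The one genuinely non-routine ingredient is the approximation lemma: \emph{for every compact metrizable $K\subseteq Q$ and every $\varepsilon>0$ there is a Cantor set $D\subseteq Q$ with Hausdorff distance $<\varepsilon$ from $K$.} To prove it I would cover $K$ by finitely many balls $B(k_1,\varepsilon/2),\dots,B(k_m,\varepsilon/2)$ with $k_i\in K$, and, recursively, choose inside each ball a nonempty Cantor set $D_i\subseteq B(k_i,\varepsilon/2)$ with the $D_i$ pairwise disjoint; this is possible because each previously chosen $D_1\cup\dots\cup D_{i-1}$ is a compact zero-dimensional subset of $Q$, hence nowhere dense, so $B(k_i,\varepsilon/2)\setminus(D_1\cup\dots\cup D_{i-1})$ still contains a nonempty open set and therefore a Cantor set. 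Then $D=D_1\oplus\cdots\oplus D_m$ is a finite disjoint union of clopen Cantor sets, hence itself a Cantor set, and its Hausdorff distance to $K$ is at most $\varepsilon$: every point of $D_i$ is within $\varepsilon/2$ of $k_i\in K$, while every point of $K$ is within $\varepsilon/2$ of some $k_i$ and hence within $\varepsilon$ of $D_i$. Applying this with $\varepsilon=1/n$ yields the required $D_n$.

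It then remains to verify the four properties, all routine once the set-up is in place. \emph{Compactness}: $X$ is closed in $Q\times[0,1]$, since the set of heights $\{0\}\cup\{1/n:n\ge1\}$ is closed, each $1/n$ is isolated in it, each $D_n$ is compact, and any sequence from the levels that does not stay in finitely many of them has second coordinate tending to $0$ and first coordinate eventually lying in $D_n$ with Hausdorff distance to $K$ tending to $0$, forcing its cluster points into $K\times\{0\}$. \emph{Structure of $U$}: each $D_n\times\{1/n\}$ is clopen in $X$ (open because $1/n$ is isolated among the heights, compact because $D_n$ is), so $U$ is open and $U\cong\bigoplus_{n\ge1}C$, a nonempty, locally compact, non-compact, zero-dimensional, separable metrizable space without isolated points, hence homeomorphic to $C\setminus\{0\}$ by \cite[6.2.A(c)]{Eng}. \emph{Density}: given $(k,0)\in K\times\{0\}$ and a basic neighbourhood, pick $n$ with $1/n$ below both the height-cutoff and the $\rho$-radius of the neighbourhood; the inequality on the Hausdorff distance then supplies a point of $D_n\times\{1/n\}\subseteq U$ inside it. \emph{Remainder}: $X\setminus U=K\times\{0\}\cong K$.

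I expect the only real obstacle to be the approximation lemma together with the closedness check for $X$; the rest is bookkeeping. Note that working inside the Hilbert cube (rather than inside $K$ itself) is essential, since $K$ need not contain any Cantor set — for $K$ a finite or convergent sequence it contains none — whereas $Q$ always has room, and indeed for $K$ a singleton the construction just recovers the Cantor set as a one-point compactification of $C\setminus\{0\}$.
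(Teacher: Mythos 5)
Your proof is correct, but it takes a genuinely different route from the paper's. The paper's argument is a two-line quotient construction: it writes $C \cong (\omega+1)\times C$, fixes a continuous surjection $f$ from the top level $\{\omega\}\times C$ onto $K$ (every compact metrizable space is a continuous image of the Cantor set), and forms the adjunction space $C\cup_f K$, citing \cite[A.11.4]{Mill01} for the fact that this quotient is a metrizable compactification of $\omega\times C\cong C\setminus\{0\}$ with remainder $K$. You instead build the space by hand inside $Q\times[0,1]$: approximate $K$ in Hausdorff distance by Cantor sets $D_n$ and stack them at heights $1/n$ above $K\times\{0\}$. Your approach is more elementary and self-contained --- it avoids the quotient/adjunction machinery and the external citation, at the cost of the approximation lemma and the closedness check, both of which you carry out correctly (the recursion producing pairwise disjoint Cantor sets works because compact zero-dimensional subsets of $Q$ are nowhere dense, and a finite disjoint union of Cantor sets is a Cantor set by Brouwer's characterisation). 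Two cosmetic points: your approximation argument actually yields Hausdorff distance $\le\varepsilon$ rather than $<\varepsilon$, which is harmless since only $d_H(D_n,K)\to 0$ is used; and you can shortcut the appeal to \cite[6.2.A(c)]{Eng} by noting directly that $U\cong\bigoplus_{n}C\cong\omega\times C\cong C\setminus\{0\}$, the identification the paper itself invokes. In the end both constructions realise the same picture --- a sequence of Cantor sets converging onto a copy of $K$ --- but yours makes the geometry explicit where the paper delegates it to the adjunction-space lemma.
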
 
Since all these $X_K$ are non-reconstructible, the variety of non-reconstructible compact metrizable spaces is the same as that of all compact metrizable spaces. Note that if $K$ is not zero-dimensional then $X_K$ is not $h$-homogeneous.

\begin{proof}
Note that $C\setminus \{0\}$ is homeomorphic to $\w \times C$ and  $\p{\w +1} \times C \cong C$, and fix a continuous surjection $f \colon \singleton{\w} \times C \to K$. The adjunction space $C \cup_f K$---the quotient $X_K=C / \script{P}$ for $\script{P}= \set{f^{-1}(x)}:{x \in K} \cup \set{\singleton{x}}:{x \in \w \times C}$---is a metrizable compactification of $\w \times C$ with remainder homeomorphic to $K$ \cite[A.11.4]{Mill01}.
\end{proof}

With an eye on general compact Hausdorff spaces we state and prove a more general construction.
Recall that a subset $D$ of $X$ is \emph{sequentially dense} if every point of $X$ is the limit of a converging sequence of points in $D$. A space is \emph{sequentially separable} if it has a countable, sequentially dense subset. In general, the \emph{sequential density} of a space is the least cardinal $\kappa$ such that there exists a subset of $X$ of size $\kappa$ which is sequentially dense in $X$.

In \cite{Tkachuk}, V. Tkachuk proved that every compact Hausdorff space $K$ is a remainder of a compactification of a discrete space of cardinality at most $\cardinality{K}$. The following result requires only minor modifications in Tkachuk's original proof.

\begin{mythm}[cf.\ Tkachuk \cite{Tkachuk}]
\label{Tkaschukstheorem}
For every compact Hausdorff space $K$ of sequential density $\kappa$ there is a compactification $X_K$ of the discrete space of size $\kappa$ such that 
\begin{enumerate}
\itemsep0em 
\item the remainder $X_K \setminus \kappa$ is homeomorphic to $K$, and
\item the discrete space $\kappa$ is sequentially dense in $X_K$. 
\end{enumerate}
\end{mythm}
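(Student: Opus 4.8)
\textbf{Proof proposal for Theorem~\ref{Tkaschukstheorem}.}
The plan is to follow Tkachuk's original construction for discrete spaces, replacing his use of density by sequential density throughout. First I would fix a sequentially dense subset $D = \set{d_\xi}:{\xi < \kappa}$ of $K$ of size $\kappa$, so that every point of $K$ is the limit of a sequence from $D$. The idea is to build $X_K$ as a subspace of $K \times \beta\kappa$ (or of a suitable product/quotient), by taking the closure of a carefully chosen copy of the discrete space $\kappa$. Concretely, embed $\kappa$ into $K\times \kappa$ via $\xi \mapsto (d_\xi,\xi)$, give $\kappa$ the discrete topology (this is automatic, as the second coordinate separates points), and let $X_K$ be the closure of the image of this embedding inside $K \times \SC{\kappa}$, where $\SC{\kappa}$ is the \Cech--Stone compactification of the discrete space $\kappa$. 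Since $K \times \SC{\kappa}$ is compact Hausdorff, $X_K$ is a compactification of the discrete space $\kappa$.

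Next I would identify the remainder. The key point is that the projection $\pi_K \colon K \times \SC{\kappa} \to K$ restricted to $X_K$ is a continuous surjection onto $K$ (surjectivity uses that $D$ is dense, indeed sequentially dense, in $K$), and that the remainder $X_K \setminus \kappa$ maps homeomorphically onto $K$. To see injectivity of $\pi_K$ on the remainder one argues that if $(x,p)$ and $(x,q)$ are two distinct points of the remainder over the same $x \in K$, then $p \ne q$ in $\SC{\kappa}$; using that $x$ is the limit of a sequence $\sequence{d_{\xi_n}}$ from $D$, the corresponding set $\set{\xi_n}:{n\in\N}$ has a unique limit in the closure (because convergent sequences in $K$ pin down a single ultrafilter limit in the relevant sense), forcing $p=q$. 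This is where sequential density does the work that density does in Tkachuk's proof: it is exactly the ingredient that lets a single convergent sequence of indices control the fibre over its limit. Establishing that $\pi_K \restriction (X_K \setminus \kappa)$ is a closed bijection, hence a homeomorphism, then gives item~(1).

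For item~(2), sequential density of $\kappa$ in $X_K$, I would argue directly: given any point $(x,p) \in X_K$, pick a sequence $\sequence{d_{\xi_n}}:{n\in\N}$ in $D$ converging to $x$ in $K$; after passing to a subsequence one can arrange the index set $\set{\xi_n}:{n\in\N}$ to converge to $p$ in $\SC{\kappa}$ as well (using that the closure $X_K$ was built to contain such a point and that convergent sequences in $\SC{\kappa}$ are eventually constant, so a little care is needed — one instead chooses the $\xi_n$ so that $(d_{\xi_n},\xi_n) \to (x,p)$ in the product directly, which is possible precisely because $(x,p)$ lies in the closure of the embedded $\kappa$ and has a countable neighbourhood base in the $K$-coordinate). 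Then $\sequence{(d_{\xi_n},\xi_n)}$ is a sequence in the copy of $\kappa$ converging to $(x,p)$.

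The main obstacle I anticipate is the passage from ``dense'' to ``sequentially dense'' in the verification that the remainder is \emph{exactly} $K$ and not something larger: one must check that no extra points accumulate, i.e.\ that the fibres of $\pi_K$ over points of $K$ meet the remainder in exactly one point, and that $X_K \setminus \kappa$ is genuinely the whole remainder rather than a proper subset. Since $\SC{\kappa}$ for uncountable $\kappa$ is badly non-sequential, the bookkeeping that keeps convergent sequences of \emph{indices} in control (so that they single out one point of the remainder) is the delicate part, and is the reason the statement asks only for ``minor modifications'' — the combinatorial core of Tkachuk's argument survives, but every appeal to a dense set must be replaced by an appeal to a convergent sequence inside the sequentially dense set. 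Once that is handled, items~(1) and~(2) follow as above.
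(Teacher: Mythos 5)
Your construction goes wrong at the very first step: the closure $\tilde X$ of $\{(d_\xi,\xi) : \xi<\kappa\}$ in $K\times\beta\kappa$ does not have remainder $K$. A point $(x,p)$ with $p\in\beta\kappa\setminus\kappa$ lies in $\tilde X$ exactly when $x$ is the $p$-limit of the family $\langle d_\xi\rangle$, and since every free ultrafilter on $\kappa$ has a unique such limit in the compact Hausdorff space $K$, the second-coordinate projection maps $\tilde X\setminus\kappa$ bijectively---hence, as a continuous bijection of compact Hausdorff spaces, homeomorphically---onto $\kappa^*=\beta\kappa\setminus\kappa$. So the remainder is $\kappa^*$, not $K$ (take $K$ to be a single point to see this starkly). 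Accordingly $\pi_K$ is wildly non-injective on the remainder: for a fixed $x$, every free ultrafilter containing the index set of a sequence $\langle d_{\xi_n}\rangle$ converging to $x$ (and many others besides) yields a distinct point of the fibre over $x$. Your injectivity argument---that the index set $\{\xi_n : n\in\N\}$ ``has a unique limit in the closure''---is false: a countably infinite subset of the discrete space $\kappa$ has closure homeomorphic to $\beta\omega$ in $\beta\kappa$, hence $2^{\mathfrak{c}}$ accumulation points. Item (2) fails even more decisively: $\beta\kappa$ contains no non-trivial convergent sequences, so no sequence $\langle(d_{\xi_n},\xi_n)\rangle$ with distinct indices converges to any point of $\tilde X\setminus\kappa$, whatever the choice of $\xi_n$; membership in a closure does not produce a convergent sequence here, so the embedded copy of $\kappa$ is not sequentially dense in $\tilde X$.

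The missing step is to collapse each fibre $\pi_K^{-1}(x)\cap(\tilde X\setminus\kappa)$ to a point---or, more simply, to build the space directly without $\beta\kappa$, which is essentially Tkachuk's actual construction. Fix a surjection $\xi\mapsto d_\xi$ of $\kappa$ onto a sequentially dense subset $D$ of $K$ with every fibre infinite, let $X_K=K\cup\kappa$ as a set, declare each $\xi\in\kappa$ isolated, and give $x\in K$ the basic neighbourhoods $U\cup\{\xi : d_\xi\in U\}\setminus F$ for $U$ an open neighbourhood of $x$ in $K$ and $F\subseteq\kappa$ finite. This space is compact (a finite subcover of $K$ already absorbs all but finitely many $\xi$) and Hausdorff, $\kappa$ is open and dense with remainder carrying the original topology of $K$, and a sequence $\langle\xi_n\rangle$ of distinct indices converges to $x\in K$ if and only if $d_{\xi_n}\to x$ in $K$. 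That last equivalence is the one place where density must be upgraded to sequential density, confirming your (correct) intuition about where the hypothesis does its work; the vehicle you chose for the construction simply cannot deliver it.
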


%

\begin{mycor}
\label{CorTkaschukstheorem}
For every pair of compact Hausdorff spaces $K$ of sequential density $\kappa$, and $Z$ with a constant universal sequence of type $\sequence{T}$, there is a compactification $X_{K,Z}$ of $Z\times \kappa$ such that 
\begin{enumerate}
\itemsep0em 
\item the remainder of that compactification is homeomorphic to $K$, and
\item the compactification has a constant universal sequence of type $\sequence{T}$.
\end{enumerate}
In particular, this compactification is non-reconstructible.
\end{mycor}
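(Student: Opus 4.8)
The plan is to combine Theorem~\ref{Tkaschukstheorem} with a fibre-replacement trick, then check that the resulting space still has a constant universal sequence so that Theorem~\ref{supernonreconstruction} applies. First I would apply Theorem~\ref{Tkaschukstheorem} to $K$ to obtain a compactification $Y$ of the discrete space $\kappa$ with remainder homeomorphic to $K$ in which $\kappa$ is sequentially dense. Now replace each of the $\kappa$-many isolated points of $Y$ by a clopen copy of $Z$; formally, let $X_{K,Z}$ be the quotient of $(Y \setminus \kappa) \oplus (Z \times \kappa)$ obtained by, for each $\alpha < \kappa$, gluing the copy $Z \times \{\alpha\}$ in place of the isolated point $\alpha \in Y$, with neighbourhoods of a point $p \in X_{K,Z} \setminus (Z\times\kappa)$ declared to be the sets $W \setminus F \cup \Union\set{Z\times\{\alpha\}}:{\alpha \in W \cap \kappa,\ \alpha\notin F}$ for $W$ an open neighbourhood of $p$ in $Y$ and $F$ finite. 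One checks this is compact Hausdorff: compactness follows because any open cover, after refining near the remainder using compactness of $Y$, leaves only finitely many whole fibres $Z\times\{\alpha\}$ uncovered, each of which is compact. The remainder $X_{K,Z} \setminus (Z\times\kappa)$ is exactly $Y \setminus \kappa \cong K$, giving~(1), and $Z \times \kappa$ is dense and open by construction.

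For~(2), I must produce a constant universal sequence of type $\sequence{T}$ in $X_{K,Z}$. Points lying in some fibre $Z\times\{\alpha\}$ are handled directly: $Z$ has a constant universal sequence of type $\sequence{T}$, so each point of $Z\times\{\alpha\}$ is the limit of a sequence of disjoint clopen subsets of $Z\times\{\alpha\}$ of constant type $\sequence{T}$, and these remain clopen in $X_{K,Z}$ since $Z\times\{\alpha\}$ is clopen. For a point $p$ in the remainder, use sequential density: there is a sequence $\sequence{\alpha_n}$ in $\kappa$ converging to $p$ in $Y$, which we may take injective; pick in each fibre $Z\times\{\alpha_n\}$ a single clopen subset $B_n \cong T$ (possible since $Z$ itself has a constant universal sequence, so clopen copies of $T$ exist in $Z$). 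The $B_n$ are pairwise disjoint clopen subsets of $X_{K,Z}$, and $B_n \to p$ because $B_n \subseteq Z\times\{\alpha_n\}$ sits inside any basic neighbourhood of $p$ for $n$ large by the definition of the topology. Hence every point of $X_{K,Z}$ is the limit of disjoint clopen sets of constant type $\sequence{T}$, so $X_{K,Z}$ has a constant universal sequence of type $\sequence{T}$.

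Finally, $X_{K,Z}$ is compact Hausdorff, hence pseudocompact, so Theorem~\ref{supernonreconstruction} yields that it is non-reconstructible. The main obstacle I anticipate is the verification that the glued space $X_{K,Z}$ is genuinely compact Hausdorff and that the neighbourhood description is consistent—i.e.\ that convergence of $\sequence{\alpha_n}$ to $p$ in $Y$ transfers to convergence of whole fibres (a tail of them lying in any prescribed neighbourhood of $p$); this is the point at which the precise form of Tkachuk's compactification, in particular sequential density of $\kappa$ in $Y$, is essential, and where the "minor modifications" comment in the statement of Theorem~\ref{Tkaschukstheorem} is really being used.
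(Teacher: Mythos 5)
Your proposal is correct and follows exactly the paper's approach: apply Theorem~\ref{Tkaschukstheorem}, replace each point of the sequentially dense discrete set by a clopen copy of $Z$, handle fibre points via the constant universal sequence of $Z$ and remainder points via sequential density, and conclude non-reconstructibility from Theorem~\ref{supernonreconstruction}. The paper's proof is a two-sentence sketch of precisely this argument; your write-up merely supplies the routine verifications it leaves implicit.
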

\begin{proof}
In Theorem~\ref{Tkaschukstheorem}, replace every point of $\kappa$ by a clopen copy of $Z$. By sequential density, the resulting space has a constant universal sequence of type $\sequence{T}$. 
\end{proof}


\subsection{Non-reconstructible spaces with dense, co-dense $C_1$: $h$-homogeneous spaces}

We present a simple method to obtain non-reconstructible h-homogeneous spaces with dense and co-dense $C_1(X)$. This shows that our second question above has a negative answer. Indeed, note that in a $h$-homogeneous compact metrizable space different from the Cantor set, $C_1(X)$ must always be co-dense. 

In Lemma~\ref{C1dense_butrecon}, these spaces are modified to give an example of a compact metrizable space without isolated points which is reconstructible even though $C_1(X)$ is a dense and co-dense $G_\delta$---thus answering our first question in the negative.

Finding $h$-homogeneous spaces is simplified by the next result.
\begin{mythm}[Medini {\cite[Thm.\ 18]{medini}}]
\label{medini}
If a Hausdorff space $X$ has a dense set of isolated points then $X^\kappa$ is $h$-homogeneous for every infinite cardinal $\kappa$.
\end{mythm}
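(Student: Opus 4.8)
The plan is to prove the statement directly: show that every non-empty clopen subset $C$ of $X^\kappa$ is homeomorphic to $X^\kappa$, which is exactly what $h$-homogeneity asserts. The strategy is a Cantor--Schr\"oder--Bernstein argument for \emph{clopen} embeddings. Its two easy ingredients are mutual clopen embeddings. In one direction, $C$ is already clopen in $X^\kappa$, so $X^\kappa\cong C\oplus P$ with $P=X^\kappa\setminus C$ clopen. In the other, $C$ is open and non-empty, hence contains a basic box $\prod_{i\in F}U_i\times X^{\kappa\setminus F}$ with $F$ finite and each $U_i$ non-empty open; using density of the isolated points, pick an isolated $p_i\in U_i$, and note (Hausdorffness) that $\singleton{p_i}$ is clopen in $X$, so $\prod_{i\in F}\singleton{p_i}\times X^{\kappa\setminus F}$ is a clopen subset of $X^\kappa$ lying inside $C$ and homeomorphic to $X^{\kappa\setminus F}\cong X^\kappa$ (as $\kappa$ is infinite). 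Thus $C\cong X^\kappa\oplus Q$ for some clopen $Q\subseteq C$.

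The heart of the matter is an absorption lemma: assuming $|X|\geq 2$ (the case $|X|\le 1$ being trivial), for every clopen $E\subseteq X^\kappa$ one has $X^\kappa\cong X^\kappa\oplus\bigoplus_{n\in\N}E$. I would prove it by exploiting self-similarity. Since a dense singleton in a Hausdorff space would have to be closed, $X$ has two distinct isolated points $p,q$; put $Y=X\setminus\singleton{p}$, so $q\in Y$. Splitting off a countable block of coordinates, write $X^\kappa\cong X^\w\times X^\kappa$, and peel coordinates off the $X^\w$ factor: the sets $A_n\subseteq X^\w$ consisting of those $x$ with $x_0=\dots=x_{n-1}=p$ and $x_n\in Y$ are pairwise disjoint clopen subsets of $X^\w$ whose union omits only the constant sequence $\bar p$, and each $A_n\cong Y\times X^\w$. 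Hence $X^\w\cong\singleton{\bar p}\oplus\bigoplus_{n\in\N}A_n$, and multiplying through by $X^\kappa$ and collapsing redundant factors gives $X^\kappa\cong X^\kappa\oplus\bigoplus_{n\in\N}W$ with $W:=Y\times X^\kappa$. The key observation is that $\singleton{q}\times X^\kappa$ is a clopen copy of $X^\kappa$ inside $W$, so any clopen $E\subseteq X^\kappa$ embeds as a clopen subspace of $W$, say $W\cong E\oplus F$. Substituting into the displayed decomposition and using $\N\sqcup\N\cong\N$ to merge countable disjoint sums, one sees that both $X^\kappa$ and $X^\kappa\oplus\bigoplus_{n\in\N}E$ are homeomorphic to $X^\kappa\oplus\bigoplus_{n\in\N}E\oplus\bigoplus_{n\in\N}F$, which yields the lemma.

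Finally I would assemble the pieces. Applying the absorption lemma to $Q$ and then, to the resulting $X^\kappa$, to $P$, and rearranging the countable sum, gives $X^\kappa\cong X^\kappa\oplus\bigoplus_{n\in\N}P\oplus\bigoplus_{n\in\N}Q$. Combining this with $C\cong X^\kappa\oplus Q$ and absorbing the extra summand into $\bigoplus_{n\in\N}Q$ yields $C\cong X^\kappa\oplus\bigoplus_{n\in\N}P\oplus\bigoplus_{n\in\N}Q\cong X^\kappa$. Since $C$ was an arbitrary non-empty clopen subset of $X^\kappa$, this shows $X^\kappa$ is $h$-homogeneous. The main obstacle, and the only genuinely creative point, is the absorption lemma: one must locate inside the ``smaller'' piece $W=(X\setminus\singleton{p})\times X^\kappa$ a clopen copy of the whole space $X^\kappa$, which is exactly what the existence of a \emph{second} isolated point supplies; the remaining disjoint-sum bookkeeping is routine but must be done carefully to keep all the infinite sums legitimate.
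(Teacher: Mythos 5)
There is a genuine gap, and it sits exactly where you locate the ``only genuinely creative point'': the absorption lemma is not proved, and as stated it is false. The step that fails is the claim $X^\omega\cong\{\bar p\}\oplus\bigoplus_{n\in\N}A_n$. The sets $A_n$ together with $\{\bar p\}$ do partition $X^\omega$, but this partition is not a topological disjoint sum: $\{\bar p\}$ is not open (no basic box is contained in it once $|X|\ge 2$), and in fact every basic neighbourhood of $\bar p$ contains all but finitely many of the $A_n$, so the sequence $\langle A_n\rangle$ converges to $\bar p$. Consequently ``multiplying through by $X^\kappa$'' does not yield $X^\kappa\cong X^\kappa\oplus\bigoplus_{n}W$. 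Moreover, no repair can rescue the lemma in the stated generality: if $X$ is compact (e.g.\ the convergent sequence $\omega+1$, which has a dense set of isolated points), then $X^\kappa$ is compact, while $X^\kappa\oplus\bigoplus_{n\in\N}E$ is non-compact for any non-empty clopen $E$, so the two cannot be homeomorphic. Since the theorem is applied in this paper precisely to compact $X$ (Corollary~\ref{firstctblehhom}), the compact case cannot be waved away.

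What your construction does correctly establish is that $X^\kappa$ has (i) a clopen $\pi$-base of copies of itself (the boxes $\prod_{i\in F}\{p_i\}\times X^{\kappa\setminus F}$) and (ii) a point $\bar p$ that is the limit of a sequence of pairwise disjoint non-empty clopen sets $A_n\times X^{\kappa\setminus\omega}$. These are exactly the hypotheses of Matveev's lemma (Lemma~\ref{matveevslem}), and that is how Medini's theorem is actually derived. The real content is the proof of that lemma: one must run the Cantor--Schr\"oder--Bernstein back-and-forth so that the infinitely many leftover pieces are not split off as a free disjoint summand but are arranged to converge to a point, preserving compactness-type properties. That convergence bookkeeping is the part your outline treats as routine, and it is where the difficulty of the theorem actually lives.
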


\begin{mycor}\label{firstctblehhom}
If $X$ is a first-countable compact space with a dense set of isolated points then $X^\N$ is h-homogeneous and non-reconstructible.
\end{mycor}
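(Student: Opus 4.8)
The plan is to deduce Corollary~\ref{firstctblehhom} from two ingredients already in hand: Medini's theorem (Theorem~\ref{medini}) to get $h$-homogeneity, and Theorem~\ref{stronghomogeneousnonrec} to get non-reconstructibility once we know the $1$-point components of $X^\N$ are dense. So the corollary reduces to checking three things about $Y = X^\N$: that $Y$ is a first-countable compact space, that $Y$ is $h$-homogeneous, and that $C_1(Y)$ is dense in $Y$.

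First I would observe that $Y = X^\N$ is compact by Tychonoff and first-countable as a countable product of first-countable spaces; if $X$ is assumed metrizable throughout this section one can simply say $Y$ is compact metrizable. Second, since $X$ has a dense set of isolated points, Medini's Theorem~\ref{medini} applies with $\kappa = \aleph_0$ (infinite), giving that $X^\N$ is $h$-homogeneous. (If $X$ is a single point the statement is vacuous or trivial, so assume $X$ is non-trivial, whence $X^\N$ has no isolated points.) Third, and this is the only point requiring a small argument, I would show $C_1(X^\N)$ is dense. Basic open sets of $X^\N$ have the form $\bigcap_{i \in F} \pi_i^{-1}(V_i)$ for finite $F \subseteq \N$; since isolated points of $X$ are dense, shrink each $V_i$ to an isolated point $\{a_i\}$, and pick any point $p \in X^\N$ with $p_i = a_i$ for $i \in F$ and $p_j$ isolated in $X$ for \emph{all} $j \notin F$ as well — so that $p$ has every coordinate isolated. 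Then $\{p\}$ is open in $\prod_j \{p_j\}$... no: the product of infinitely many isolated points need not be isolated in $X^\N$. Instead, the cleaner route is: a point $p$ all of whose coordinates are isolated in $X$ has arbitrarily small clopen neighbourhoods that split off as $\bigoplus$ of clopen pieces, and in fact $\{p\}$ is a component. Concretely, I would argue the component of $p$ in $X^\N$ is $\prod_j C_X(p_j) = \prod_j \{p_j\} = \{p\}$, using that the component of a point in a product is the product of the components of the coordinates, and that an isolated point forms a (clopen, hence $1$-point) component of $X$. Hence every basic open set contains such a $p$ with $\{p\}$ a $1$-point component, so $C_1(X^\N)$ is dense.

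With density of $C_1(X^\N)$ established, Theorem~\ref{stronghomogeneousnonrec} applies directly: $X^\N$ is $h$-homogeneous, first-countable, compact, with dense $1$-point components, hence non-reconstructible. Assembling the three bullet points above completes the proof.

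The main obstacle, such as it is, is the density-of-$C_1$ step: one must resist the temptation to claim a product of infinitely many isolated points is isolated (false in general), and instead invoke the product formula for connected components together with the fact that isolated points of $X$ are exactly (trivial) clopen components. Everything else is a direct citation of Medini's theorem and of Theorem~\ref{stronghomogeneousnonrec}.
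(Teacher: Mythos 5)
Your proposal is correct and follows exactly the paper's route: Medini's Theorem~\ref{medini} for $h$-homogeneity, density of $C_1(X^\N)$ via points with all coordinates isolated, and then Theorem~\ref{stronghomogeneousnonrec}. The only difference is that you spell out (correctly, via the product formula for components) the density step that the paper states without proof.
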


\begin{proof}
Note that if $X$ has a dense set of isolated points then $X^\N$ has a dense set of $1$-point components. By Theorem~\ref{medini}, the space $X^\N$ is $h$-homogeneous, and since $X^\N$ is first-countable, the result now follows from Theorem~\ref{stronghomogeneousnonrec}.
\end{proof}

In other words, for every first-countable compactification $\gamma \N$ of the countable discrete space, the product $(\gamma \N)^\N$ is non-reconstructible. We refer the reader back to Theorem~\ref{Tkaschukstheorem}, where such compactifications are constructed.

\begin{mylem}\label{C1dense_butrecon}
There is a compact metrizable space $X$ such that $C_1(X)$ is a dense co-dense $G_\delta$ but $X$ \emph{is} reconstructible. 
\end{mylem}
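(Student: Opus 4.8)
The plan is to build $X$ out of the non-reconstructible $h$-homogeneous examples from the previous subsection, but to ``damage'' the universal sequence at exactly one point so that reconstruction is forced. Concretely, I would start with a compact metrizable $h$-homogeneous space $Y$ with $C_1(Y)$ dense and co-dense (so $Y$ is one of the spaces promised at the start of this subsection, e.g.\ $(\gamma\N)^\N$ for a suitable first-countable $\gamma\N$, which by Corollary~\ref{firstctblehhom} has a constant universal sequence of type $\sequence{C}$), fix a point $p \in Y$, and attach to $p$ a single ``rigid'' compact connected piece — for instance, glue an arc $I=[0,1]$ to $Y$ by identifying $0 \in I$ with $p$. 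Let $X = Y \cup_p I$. Then $C_1(X) = C_1(Y) \setminus \Set{p}$ is still a dense, co-dense $G_\delta$ in $X$ (the arc contributes one non-degenerate component and removing the single point $p$ from a dense $G_\delta$ of size $\cont$ leaves a dense $G_\delta$), so the first assertion holds.

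The substance is showing $X$ is reconstructible. By the Reconstruction Characterisation Theorem~\ref{recchartthm} it suffices to show $X$ has \emph{no} universal sequence. Suppose it did; then in particular the endpoint $q=1$ of the attached arc would be the limit of a sequence of pairwise disjoint clopen sets $B^q_n$. But $q$ has a neighbourhood base consisting of half-open sub-arcs $[t,1]$, each of which is connected; any clopen subset of $X$ contained in such a sub-arc and not containing $q$ must be empty, since $[t,1]$ is connected and a proper non-empty clopen subset of a connected space cannot exist. Hence no non-trivial clopen set converges to $q$, contradicting the existence of a universal sequence. Therefore $X$ has no universal sequence and is reconstructible.

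The one technical point to verify carefully is that a clopen subset of $X$ meeting the half-open arc $(0,1]$ but avoiding $q$ is forced to be empty: the relevant neighbourhood base of $q$ is $\set{(t,1]}:{0<t<1}$, and each closed sub-arc $[t,1]$ is a connected clopen-in-itself witness, so any clopen $B$ of $X$ with $B \subseteq (t,1]$ is clopen in the connected space $[t,1]$ and hence is $\emptyset$ or $[t,1]\ni q$. I would also check that $X$ is genuinely without isolated points (clear, since $Y$ has none and the arc has none), that $X$ is compact metrizable (finite adjunction of compact metrizable spaces along a point), and — as a sanity check rather than a logical necessity — that the argument does not accidentally apply to $Y$ itself (it does not: in $Y$ every point, including $p$, sits in the closure of a dense family of Cantor clopen sets, giving the constant universal sequence).

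The main obstacle, and the place to be careful, is purely the verification that attaching \emph{one} connected arc genuinely destroys \emph{every} universal sequence and not merely the obvious one at $q$: a universal sequence must produce a converging sequence of disjoint clopen sets at \emph{each} point of $X$, so it is enough to exhibit a single point (namely $q$, or indeed any point of the open arc $(0,1)$, which also has a connected neighbourhood base) where this fails; once that is pinned down the rest is routine. An alternative, if one prefers an $h$-homogeneous-flavoured obstruction rather than an arc, is to attach any compact connected non-degenerate metrizable space (a continuum) at $p$ and invoke the same connectedness argument at a non-cut point of that continuum — whose existence is guaranteed for any non-degenerate continuum — but the arc is the cleanest choice.
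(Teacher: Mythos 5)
There is a genuine gap, and it lies in the first half of the claim rather than in the reconstructibility argument. In the wedge $X = Y \cup_p I$, the set $(0,1]$ (the arc minus the glued endpoint) is a non-empty \emph{open} subset of $X$: its preimage under the quotient map $Y \sqcup I \to X$ is open. Every point of $(0,1]$ lies in the non-degenerate component of $X$ that contains the whole arc, so $(0,1]$ is a non-empty open set disjoint from $C_1(X)$, and therefore $C_1(X)$ is \emph{not} dense in $X$. Your space thus fails the very property the lemma is designed to exhibit; it is indeed reconstructible, but only for the trivial reason already covered by Theorem~\ref{lotsofreconstrutible} (namely that $C_1(X)$ is not a dense $G_\delta$), whereas the point of the lemma is to produce a reconstructible space to which that theorem does \emph{not} apply. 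Your connectedness argument at the endpoint $q$ is correct in isolation, but it is really the same phenomenon: a point with a non-degenerate connected neighbourhood base can never be a limit of disjoint non-empty clopen sets precisely because $C_1$ fails to be dense near it. Any ``rigid connected attachment'' with non-empty interior will run into this obstruction, so the approach cannot be patched by choosing a different continuum in place of the arc.

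The paper's construction stays inside the class of spaces with dense $C_1$: it takes $X = Y \oplus Z$ for two \emph{non-homeomorphic} compact metrizable $h$-homogeneous non-reconstructible spaces $Y$ and $Z$ (supplied by Corollary~\ref{firstctblehhom}), each different from the Cantor set. Then $C_1(X)$ is a dense, co-dense $G_\delta$, and no universal sequence exists: a sequence of disjoint clopen sets converging to a point of $Y$ is eventually contained in the clopen set $Y$, hence by $h$-homogeneity eventually of constant type $\sequence{Y}$, while at points of $Z$ it would have to be eventually of type $\sequence{Z}$; since $Y \not\cong Z$, no single type serves all points, and reconstructibility follows from Theorem~\ref{recchartthm}. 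If you want to keep the spirit of your idea, the obstruction must be distributed densely through the space rather than concentrated on one open piece --- which is exactly what the disjoint-union trick accomplishes.
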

\begin{proof}
Let $Y$ and $Z$ be non-homeomorphic compact metrizable, $h$-homogeneous non-reconstructible spaces, as given by Corollary~\ref{firstctblehhom}, different from the Cantor set. Let $X=Y \oplus Z$. Then $X$ is a compact metrizable space, and $C_1(X)$ is a dense co-dense $G_\delta$. However, $X$ does not have a universal sequence, because the sequence would need to be constant of type $\sequence{Y}$ at points of $Y$, but constant of type $\sequence{Z}$ at points of $Z$. Hence, $X$ is reconstructible.
\end{proof}

\newcommand{\block}{block}

\subsection{A geometric construction of spaces with dense and co-dense $C_1$}
\label{section253}

We now present machinery for geometric constructions of compact metrizable non-reconstructible spaces. Our construction takes place in the unit cube $I^3$ and works by constructing a decreasing sequence of compact subsets of $I^3$.

\subsubsection*{An informal description}
The basic building block of our construction consists of a planar continuum and a countable sequence of cubes of exponentially decreasing diameter \lq approaching\rq\ this continuum from above so that every point of the continuum is a limit of cubes. In other words, our basic building block is a compactification of $I^3 \times \N$  embedded in $I^3$ with remainder a planar continuum. 
If the planar continuum is $E$, we will call a space of this type a \block\ with basis $E$.

Let $X_1$ be such a \block\ with basis $E_\emptyset$. To obtain $X_2$ we replace the $k$th cube, denoted by $F_{\sequence{k}}$, by an appropriately scaled \block\ with basis $E_{\sequence{k}}$, and we do this for each $k$ (see Figure \ref{figuregeometric}). In $X_2$ we index the new cubes by elements of $\N^2$. Clearly, $X_2 \subset X_1$. We repeat this procedure to inductively construct the $X_n$.

\begin{center}
\begin{figure}[ht]
\begin{tikzpicture}
\draw (0,8) rectangle (8,0); 
\draw[thick] (1,0.75) -- (7,0.75); 

\draw (1,7) rectangle (3,5); 
\draw (4,4.5) rectangle (5,3.5); 
\draw (5.5,3) rectangle (6,2.5); 
\draw (1.25,2.25) rectangle (1.5,2); 
\draw (3.5,1.875) rectangle (3.675,1.75); 
\draw[dotted] (4.25,1.5) -- (6.5,1);

\node[below left] at (8,8) {$I^3$};

\node[right] at (3,7) {$F_{\sequence{1}} = F_1^{E_\emptyset,D_\emptyset}$};
\node[right] at (5,4.5) {$F_{\sequence{2}} = F_2^{E_\emptyset,D_\emptyset}$};
\node[right] at (6,3) {$F_{\sequence{3}}$};
\node[right] at (1.5,2.25) {$F_{\sequence{4}}$};
\node[right] at (3.675,1.875) {$F_{\sequence{5}}$};

\node[right] at (7,0.75) {$E_\emptyset$};

\draw[fill=blue] (2,0.75) circle (0.075);
\node[anchor=north] at (2,0.75) {$d_{\emptyset,1}$};

\draw[fill=blue] (4.5,0.75) circle (0.075);
\node[anchor=north] at (4.5,0.75) {$d_{\emptyset,2}$};

\draw[fill=blue] (5.75,0.75) circle (0.075);
\node[anchor=north] at (5.75,0.75) {$d_{\emptyset,3}$};

\draw[fill=blue] (1.375,0.75) circle (0.075);
\node[anchor=north] at (1.375,0.75) {$d_{\emptyset,4}$};

\draw[fill=blue] (3.5875,0.75) circle (0.075);
\node[anchor=north] at (3.5875,0.75) {$d_{\emptyset,5}$};

\begin{scope}[shift={(1,5)},scale=0.25]
\draw[thick] (1,0.75) -- (5.75,0.75); 

\draw (1,7) rectangle (3,5); 
\draw (4,4.5) rectangle (5,3.5); 
\draw (5.5,3) rectangle (6,2.5); 
\draw (1.25,2.25) rectangle (1.5,2); 
\draw (3.5,1.875) rectangle (3.675,1.75); 
\draw[dotted] (4.25,1.5) -- (5.5,1);

\end{scope}
\begin{scope}[shift={(1,5)}, scale=0.25, every node/.style={scale=0.75}]
\node[right] at (3,7) {$F_{\sequence{1,1}}$};
\node[right] at (5,4.5) {$F_{\sequence{1,2}}$};

\node[right] at (5.75,0.75) {$E_{\sequence{1}}$};
\end{scope}

\begin{scope}[shift={(4,3.5)},scale=0.125]
\draw (1,0.75) -- (7,0.75); 

\draw (1,7) rectangle (3,5); 
\draw (4,4.5) rectangle (5,3.5); 
\draw (5.5,3) rectangle (6,2.5); 
\draw (1.25,2.25) rectangle (1.5,2); 
\draw (3.5,1.875) rectangle (3.675,1.75); 
\draw[dotted] (4.25,1.5) -- (6.5,1);

\end{scope}

\begin{scope}[shift={(5.5,2.5)},scale=0.0625]
\draw (1,0.75) -- (7,0.75); 

\draw (1,7) rectangle (3,5); 
\draw (4,4.5) rectangle (5,3.5); 
\draw (5.5,3) rectangle (6,2.5); 
\draw (1.25,2.25) rectangle (1.5,2); 
\draw (3.5,1.875) rectangle (3.675,1.75); 
\draw[dotted] (4.25,1.5) -- (6.5,1);

\end{scope}

\begin{scope}[shift={(1.25,2)},scale=0.03125]
\draw[thin] (1,1.75) -- (6.5,1.75); 

\draw[thin] (1.5,6) rectangle (3.5,4); 
\draw[dotted] (4,4) -- (6.5,1);

\end{scope}

\end{tikzpicture}
\setlength{\belowcaptionskip}{-10pt}
\caption{A sketch of $X_2$}\label{figuregeometric}
\end{figure}
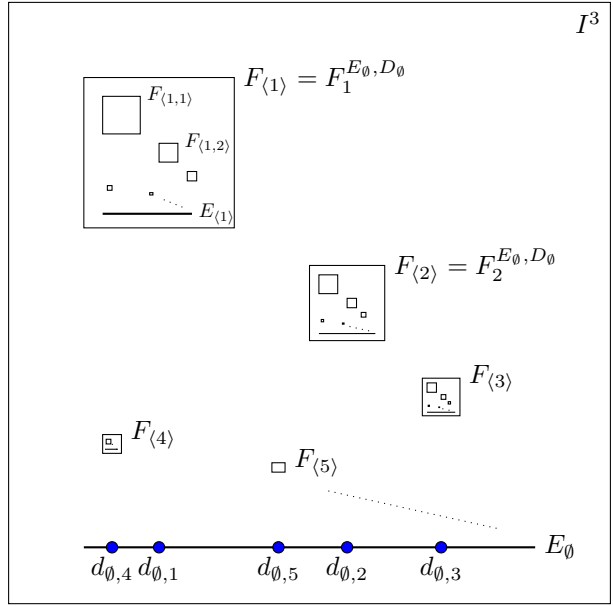
\end{center}


\subsubsection*{Relativizing to cubes}
For a closed cube $C=\closure{B}^{\infty}_r(c) = \set{x \in \R^3}:{d_\infty(x,c)\le r}$ with center $c$ and side-length $2r$ in $\R^3$ we let $a_C:I^3 \to C$ be the natural affine map given by $x \mapsto 2r(x-(\frac12,\frac12,\frac12))+c$. If $A$ is a subset of $I^3$ we say that its image under $a_C$ is the relativization of $A$ to $C$ and write $\relativization{A}{C}$ for this image.

\subsubsection*{The basic building block}
Given a planar continuum $E \subset \left[\frac14,\frac34\right]^2 \times \singleton{\frac12} \subset I^3$ and a countable dense enumerated subset $D=\set{d_k}:{k \in \N}$ of $E$, we define $$
F_{k}^{E,D} = \closure{B}^\infty_{2^{-2k-1}}\of{d_k+\p{0,0,2^{-2k}}} 
$$ 
and 
$$C_{E,D} = E \union \Union_{k \in \N} F_{k}^{E,D} .$$
We have that $C_{E,D} \subset I^3$, and for distinct $k,l \in \N$ that $F_{k}^{E,D} \cap F_{l}^{E,D}=\emptyset$.

\subsubsection*{The recursive construction}
Our construction uses as input a countable list $\script{E}=\set{(E_f,D_f)}:{f\in \N^{<\N}}$ (where $\N = \Set{1,2,\ldots}$ and $\N^{<\N}=\bigcup_{n \in \N} \N^n$) of non-trivial planar continua $E_f \subset \left[\frac14,\frac34\right]^2 \times \singleton{\frac12} \subset I^3$, and countable dense subsets $D_f \subset E_f$ with a fixed enumeration $D_f = \set{d_{f,k}}:{k \in \N}$. 

From this list $\script{E}$, we will build a decreasing sequence of compact metrizable spaces $X_n \subset I^3$, each with a designated collection of closed cubes $\set{F_f}:{f \in \N^n}$.

We will start by defining
$$
X_1 	= \relativization{C_{E_\sequence{\emptyset},D_\sequence{\emptyset}}}{I^3} \; \textnormal{ and, for all $k \in \N$, } \; 
F_{\sequence{k}} = \relativization{F_{k}^{E_\sequence{\emptyset},D_\sequence{\emptyset}}}{I^3}.
$$
The relativization at this point is trivial and has only been included for the sake of clarity.
Having defined $X_n$ and $ \set{F_f}:{f \in \N^n}$ for some $n \in \N$, we set
%
$$
X_{n+1} = \left( X_n \setminus \Union_{f \in \N^n} F_f \right) \union \Union_{f \in \N^n} \relativization{C_{E_f,D_f}}{F_f} 
$$
and for $k \in \N$ and $f \in \N^n$
$$
F_{f\concat k} = \relativization{F_{k}^{E_f,D_f}}{F_f}
	.
$$
Since the $X_n$ form a decreasing sequence of non-empty compact sets, the space $X_\script{E} = \Intersection_n X_n$ is a non-empty compact metric space.

\subsubsection*{Key properties of the construction} The next lemma gathers the necessary particulars about our construction, and can be easily verified.
 
\begin{mylem}
\label{constrlemm1}
Let $\script{E}=\set{(E_f,D_f)}:{f\in \N^{<\N}}$ be a list of non-trivial planar continua, and consider the compact metrizable space $X_\script{E}$ as described above. 
\begin{enumerate}
\itemsep0em 
\item For all $g \in \N^{<\N}$ we have $F_g \cap X_\script{E}$ is homeomorphic to $X_{\tilde{\script{E}}}$ where $\tilde{\script{E}}=\set{(\tilde{E}_f, \tilde{D}_f)}:{f\in \N^{<\N}}$ with $\tilde{E}_f = E_{g \concat f}$ and $\tilde{D}_f = D_{g \concat f}$.
\item The union over $\script{F}_n = \set{F_f \cap X_\script{E}}:{f \in \N^n}$ equals $X_\script{E} \setminus X_{n-1}$ and is a dense open subspace of $X_\script{E}$.
\item The collection $\script{F}=\bigcup_n \script{F}_n$ forms a clopen $\pi$-basis for $X_\script{E}$.
\item We have $C_1(X_\script{E}) = X_\script{E} \setminus \bigcup_{f \in \N^{<\N}} \relativization{E_f}{F_f}$ is a dense, co-dense $G_\delta$ in $X_\script{E}$. 
\end{enumerate}
\end{mylem}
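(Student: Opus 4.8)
The plan is to read all four items off the recursive, self-similar description of $X_\script{E}$; no new idea is needed, and the only genuine bookkeeping is in item (1), so I would settle that first and then harvest the rest. For (1) the one point requiring care is that the relativization maps compose: each $a_C\colon I^3\to C$ is a scaling-and-translation, and the scaling-and-translation carrying $I^3$ onto a fixed axis-parallel cube is unique, so $a_{F_{g\concat k}}=a_{F_g}\circ a_{F_k^{E_g,D_g}}$ for all $g\in\N^{<\N}$ and $k\in\N$. Fixing $g$, let $\Sequence{\tilde X_m}:{m\in\N}$ and $\set{\tilde F_h}:{h\in\N^{<\N}}$ be the stages and designated cubes of the construction driven by the shifted list $\tilde{\script{E}}$ with $\tilde E_f=E_{g\concat f}$, $\tilde D_f=D_{g\concat f}$. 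Induction on $n\ge\cardinality{g}+1$, using the composition identity to match the level-$n$ designated cubes lying inside $F_g$ with the level-$(n-\cardinality{g})$ designated cubes of the $\tilde{\script{E}}$-construction, yields $X_n\intersect F_g=\relativization{\tilde X_{n-\cardinality{g}}}{F_g}$; the base case $n=\cardinality{g}+1$ is precisely the clause $X_{\cardinality{g}+1}\intersect F_g=\relativization{C_{E_g,D_g}}{F_g}$ of the recursion together with $C_{E_g,D_g}=\tilde X_1$. Intersecting over $n$ and using that $\sequence{X_n}$ is decreasing gives $F_g\intersect X_\script{E}=\relativization{X_{\tilde{\script{E}}}}{F_g}$, which is an affine image of $X_{\tilde{\script{E}}}$ and hence homeomorphic to it.

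For (2)--(4) I would record three facts, each immediate from the formula for $C_{E,D}$. (a) In each $C_{E,D}$ the cube $F_k^{E,D}$ meets only the slab of third coordinates $[\tfrac12+2^{-2k-1},\tfrac12+3\cdot2^{-2k-1}]$, and these slabs (over $k\in\N$) and the level $\tfrac12$ of $E$ are pairwise disjoint; hence $F_k^{E,D}$ is clopen in $C_{E,D}$, so by (1) every $F_f\intersect X_\script{E}$ is clopen in $X_\script{E}$. (b) $\diam{F_f}\to 0$ as $\cardinality{f}\to\infty$ (the diameter contracts by a definite factor at each level), and a point of $X_\script{E}$ lies in some level-$n$ designated cube if and only if it does not lie on a relativized basis continuum $\relativization{E_g}{F_g}$ with $\cardinality{g}<n$; hence $\Union\script{F}_n=X_\script{E}\setminus\Union\set{\relativization{E_f}{F_f}}:{\cardinality{f}<n}$, which is the dense open subspace asserted in item (2): it is open by (a), and dense because the deleted set is a countable union of planar continua, each closed and nowhere dense in $X_\script{E}$ --- nowhere dense since density of $D_f$ in $E_f$ forces the cubes $F_{f\concat k}$ to cluster onto $\relativization{E_f}{F_f}$ --- so the Baire category theorem applies in the compact metric space $X_\script{E}$. (c) Consequently $X_\script{E}=\closure{\Union\set{\relativization{E_f}{F_f}}:{f\in\N^{<\N}}}$, because a point of $X_\script{E}$ lying off every basis continuum equals $\Intersection_n F_{f_n}$ for a nested branch $\sequence{f_n}$ with $\diam{F_{f_n}}\to 0$, and is then a limit of relativized points of the dense sets $D_{f_n}$.

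Item (3) is now immediate: the $F_f\intersect X_\script{E}$ are clopen by (a), and they form a $\pi$-base, since any nonempty open $U\subseteq X_\script{E}$ contains a point $p$ which is either on some $\relativization{E_g}{F_g}$, near which (density of $D_g$) lie cubes $F_{g\concat k}$ of arbitrarily small diameter, or equals $\Intersection_n F_{f_n}$, whose tail cubes $F_{f_n}\intersect X_\script{E}$ eventually lie in $U$; in either case some $F_f\intersect X_\script{E}\subseteq U$. For item (4), $C_1(X_\script{E})\supseteq X_\script{E}\setminus\Union_f\relativization{E_f}{F_f}$ holds because such a point $p=\Intersection_n F_{f_n}$ has $\set{F_{f_n}\intersect X_\script{E}}:{n\in\N}$ as a clopen neighbourhood base of vanishing diameter, whence its component is $\{p\}$; and $C_1(X_\script{E})\subseteq X_\script{E}\setminus\Union_f\relativization{E_f}{F_f}$ holds because a point of a relativized $\relativization{E_f}{F_f}$ lies in a non-degenerate subcontinuum of $X_\script{E}$. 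Density of $C_1(X_\script{E})$ then follows from the $\pi$-base of (3) together with (1), as each $F_f\intersect X_\script{E}$ is a copy of some $X_{\tilde{\script{E}}}$ and therefore contains one-point components; co-density follows from (c); and $C_1(X_\script{E})$ is $G_\delta$ because $\Union_f\relativization{E_f}{F_f}$ is a countable union of compacta. I expect the only step needing genuine care to be the coherence of the relativization maps underlying (1); after that everything reduces to routine point-set topology, which is presumably why the paper states the lemma as easily verified.
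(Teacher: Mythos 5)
Your verification is correct and is exactly the routine check the paper alludes to when it says the lemma ``can be easily verified'' (no proof is given there): the composition identity $a_{F_{g\concat k}}=a_{F_g}\circ a_{F_k^{E_g,D_g}}$ driving the self-similarity in (1), the slab/diameter estimates giving clopenness and a vanishing-diameter branch structure, and the Baire/density arguments for (2)--(4) are all sound. Note only that you have silently (and correctly) repaired what must be a typo in item (2): as printed, $X_\script{E}\setminus X_{n-1}$ is empty since $X_\script{E}\subseteq X_{n-1}$, and the intended set is, as you state, $X_\script{E}$ minus the relativized continua $\relativization{E_f}{F_f}$ with $\cardinality{f}<n$.
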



\subsection{More non-reconstructible $h$-homogeneous spaces}
\label{geometricex1}
Applying the above construction in the special case that we choose all $E_f$ to be identical, we get a non-reconstructible $h$-homogeneous space. Of course, taking all the $E_f$ to be a single point, we obtain the Cantor set which is the inspiration for the above construction.

\begin{mythm}
\label{constructionhhomo1}
Suppose $\script{E}^{(0)} = \singleton{(E,D)}$ for some planar continuum $E$ and a fixed enumerated countable dense set $D \subset E$. Then the space $X_{\script{E}^{(0)}}$ is a non-reconstructible compact metrizable $h$-homogeneous space.
\end{mythm}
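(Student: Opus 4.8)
The plan is to show that $X_{\script{E}^{(0)}}$ is both $h$-homogeneous and has a constant universal sequence; non-reconstructibility then follows immediately from Theorem~\ref{supernonreconstruction} (or via Theorem~\ref{stronghomogeneousnonrec}, once $h$-homogeneity and density of $C_1$ are in hand). So the work splits cleanly into two parts.

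First, I would establish $h$-homogeneity. By Lemma~\ref{constrlemm1}(3), the collection $\script{F}=\bigcup_n \script{F}_n$ of sets $F_f \cap X_{\script{E}^{(0)}}$ is a clopen $\pi$-basis for $X_{\script{E}^{(0)}}$. By Lemma~\ref{constrlemm1}(1), since all $(E_f,D_f)$ are the same pair $(E,D)$, we have $F_g \cap X_{\script{E}^{(0)}} \cong X_{\tilde{\script{E}}}$ where $\tilde{\script{E}}$ again has all entries equal to $(E,D)$, i.e.\ $\tilde{\script{E}}=\script{E}^{(0)}$; hence every $F_g \cap X_{\script{E}^{(0)}}$ is homeomorphic to $X_{\script{E}^{(0)}}$ itself. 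Thus $X_{\script{E}^{(0)}}$ has a $\pi$-basis of clopen sets each homeomorphic to the whole space. To upgrade this to full $h$-homogeneity---every non-empty clopen $W$ is homeomorphic to $X_{\script{E}^{(0)}}$---I would use the standard back-and-forth / Cantor--Bendixson-style argument: any non-empty clopen $W$ can be partitioned into countably many disjoint clopen pieces drawn from $\script{F}$ (of vanishing diameter, using compactness and that $\script{F}$ is a $\pi$-basis), so $W \cong \bigoplus_{n\in\N} X_{\script{E}^{(0)}}$; and the same decomposition applied to $X_{\script{E}^{(0)}}$ itself (via a single member of $\script{F}$ together with its complement, recursively) shows $X_{\script{E}^{(0)}} \cong \bigoplus_{n\in\N} X_{\script{E}^{(0)}}$ as well. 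Comparing the two gives $W \cong X_{\script{E}^{(0)}}$. Alternatively, one invokes the known fact that a zero-dimensional-at-a-$\pi$-base, crowded compact space with a $\pi$-basis of mutually homeomorphic clopen sets closed under the relevant operation is $h$-homogeneous, but I prefer to spell out the $\bigoplus$-absorption argument since it is self-contained.

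Second, I would produce the constant universal sequence of type $\sequence{X_{\script{E}^{(0)}}}$. Fix any point $x \in X_{\script{E}^{(0)}}$. By Lemma~\ref{constrlemm1}(2)--(3), every neighbourhood of $x$ contains a member of $\script{F}$, and members of $\script{F}$ have diameters tending to $0$ as one descends the tree $\N^{<\N}$; so I can recursively choose a sequence $\sequence{F_{g_n}\cap X_{\script{E}^{(0)}} : n\in\N}$ of pairwise disjoint clopen sets from $\script{F}$, all contained in the $1/n$-ball about $x$ and avoiding $x$ itself (this uses that $\{x\}$ is not open, which holds since $X_{\script{E}^{(0)}}$ is crowded---it has a dense set of $1$-point components by Lemma~\ref{constrlemm1}(4) and no isolated points). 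This sequence converges to $x$, and each term is homeomorphic to $X_{\script{E}^{(0)}}$ by the $h$-homogeneity established above (or directly by Lemma~\ref{constrlemm1}(1) as noted). Since $x$ was arbitrary, $X_{\script{E}^{(0)}}$ has a constant universal sequence of type $\sequence{X_{\script{E}^{(0)}}}$. Being compact (hence pseudocompact) and Hausdorff, Theorem~\ref{supernonreconstruction} yields that $X_{\script{E}^{(0)}}$ is non-reconstructible.

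The routine parts---that $X_{\script{E}^{(0)}}$ is compact metrizable (already recorded in the construction), that it is crowded, that the $F_f$ have vanishing diameter---are immediate from the construction and Lemma~\ref{constrlemm1}. The one place requiring genuine care is the $h$-homogeneity argument: one must verify that an \emph{arbitrary} non-empty clopen subset $W$ (not just a basic one from $\script{F}$) decomposes as a countable disjoint sum of copies of $X_{\script{E}^{(0)}}$. The subtlety is that $W$ need not itself lie in $\script{F}$; the fix is that $W$ is clopen and $X_{\script{E}^{(0)}}$ is compact metrizable, so $W$ is a compact metrizable zero-dimensional-at-$\script{F}$ set, and a greedy exhaustion by disjoint maximal-diameter members of $\script{F}$ contained in $W$ covers all of $W$ (any uncovered point would have a neighbourhood in $\script{F}$ disjoint from the chosen ones, contradicting maximality at some stage). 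This is the main obstacle, though it is a well-trodden one; everything else is bookkeeping on the already-established Lemma~\ref{constrlemm1}.
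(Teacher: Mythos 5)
There is a genuine gap in your $h$-homogeneity argument, and it occurs at exactly the step you flag as ``the one place requiring genuine care.'' First, the target of your decomposition is impossible on compactness grounds alone: a clopen subset $W$ of the compact space $X_{\script{E}^{(0)}}$ is itself compact, so it can never be partitioned into \emph{infinitely} many pairwise disjoint non-empty clopen pieces --- any such partition would be an open cover with no finite subcover. Hence $W \cong \bigoplus_{n\in\N} X_{\script{E}^{(0)}}$ and a fortiori $X_{\script{E}^{(0)}} \cong \bigoplus_{n\in\N} X_{\script{E}^{(0)}}$ are false as stated. Second, even finite partitions of $W$ into members of $\script{F}$ need not exist, because $\script{F}$ is only a $\pi$-base, not a base: a point $p$ lying on a relativized continuum $\relativization{E_f}{F_f}$ has $F_f \cap X_{\script{E}^{(0)}}$ as its \emph{smallest} neighbourhood in $\script{F}$. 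So take $W = \p{F_f \cap X_{\script{E}^{(0)}}} \setminus \p{F_{f\concat 1} \cap X_{\script{E}^{(0)}}}$: this is non-empty and clopen, it contains the connected set $\relativization{E_f}{F_f}$, and no member of $\script{F}$ that contains this continuum fits inside $W$. Your greedy exhaustion therefore stalls --- the uncovered points of $\relativization{E_f}{F_f}$ do \emph{not} have an $\script{F}$-neighbourhood disjoint from the pieces already chosen --- and $W$ admits no cover by disjoint members of $\script{F}$ at all.

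The repair is to stop trying to decompose arbitrary clopen sets by hand and instead invoke Matveev's Lemma~\ref{matveevslem}, which is tailored to precisely the situation you have established: a $\pi$-base of clopen sets homeomorphic to $X_{\script{E}^{(0)}}$ (Lemma~\ref{constrlemm1}(1),(3)) together with a sequence of non-empty open sets converging to a point (immediate from the construction, e.g.\ the cubes $F_{\sequence{k}}\cap X_{\script{E}^{(0)}}$ converging to a point of $E_{\sequence{\emptyset}}$). That is exactly the paper's route. The remainder of your proposal is sound: the constant universal sequence of type $\sequence{X_{\script{E}^{(0)}}}$ follows from the $\pi$-base and Lemma~\ref{constrlemm1}(1) without any appeal to $h$-homogeneity (this is Lemma~\ref{pihomsequences}), and Theorem~\ref{supernonreconstruction} then gives non-reconstructibility, which is how the paper's cited Theorem~\ref{stronghomogeneousnonrec} is proved anyway.
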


For the proof we need the following lemma.

\begin{mylem}[Matveev {\cite{matveev}}]
\label{matveevslem}
Assume that $X$ has a $\pi$-base consisting of clopen sets that are homeomorphic to $X$. If there exists a sequence $\Sequence{U_n}:{n \in \w}$ of non-empty open subsets of $X$ converging to a point of $X$ then $X$ is $h$-homogeneous. \qed
\end{mylem}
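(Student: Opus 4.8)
The plan is to show directly that every non-empty clopen subset $V \subseteq X$ is homeomorphic to $X$ (the definition of $h$-homogeneity) by a Cantor--Bernstein-style back-and-forth between $X$ and $V$, in which the convergent sequence supplies an accumulation point where the infinitely many matching steps are forced to converge. First I would dispose of the degenerate case: if $X$ is a single point it is trivially $h$-homogeneous, and otherwise the $\pi$-base hypothesis rules out isolated points (an isolated point $x$ would make $\singleton{x}$ a non-empty open set, which by the $\pi$-base contains a clopen copy of $X$, forcing $X = \singleton{x}$). So I may assume $X$ has no isolated points and that $V$ and its clopen complement $R = X \setminus V$ are both non-empty.

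The two ingredients of the back-and-forth are a pair of clopen embeddings. On one side, the inclusion $V \hookrightarrow X$ exhibits $V$ as a clopen subset of $X$. On the other side, applying the $\pi$-base to the open set $V$ yields a clopen set $X^* \subseteq V$ with $X^* \cong X$, i.e.\ a clopen embedding $X \hookrightarrow V$. Thus $X$ and $V$ each embed as a clopen subspace of the other, so we are in the classical Cantor--Bernstein situation; the only obstruction to concluding $X \cong V$ is the well-known failure of the topological Cantor--Bernstein theorem for clopen embeddings, namely that the set $C = \bigcup_n C_n$ (where $C_0 = R$ and $C_{n+1}$ is the image of $C_n$ under the composite clopen self-embedding $X \hookrightarrow V \hookrightarrow X$) is open but need not be closed, so the candidate bijection need not be continuous. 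Resolving this is exactly where the convergent sequence enters.

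Next I would extract the usable local structure from the hypotheses. From the convergent sequence $\Sequence{U_n}:{n \in \w}$ of non-empty open sets, shrinking each $U_n$ to a clopen copy of $X$ via the $\pi$-base (and thinning to a pairwise disjoint subfamily), I obtain disjoint clopen copies $W_n \cong X$ with $W_n \to p$ for some point $p$. Since $V$ itself contains a clopen copy of $X$, and that copy again contains such a converging family, I may in fact choose $p$ inside $V$. The essential consequence is that every neighbourhood of $p$ contains clopen copies of $X$, so $p$ is an accumulation point at which clopen copies of $X$ can be made to ``vanish''.

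Finally I would run the back-and-forth adaptively rather than iterating a single map: at each stage I match off a further clopen piece of $X$ with a clopen piece of $V$, always arranging that the two as-yet-unmatched residues are clopen copies of $X$ pushed into ever smaller members $W_n$ of the converging family, so that both residues shrink toward $p$. In the limit every point except $p$ has been matched and $p$ is sent to itself; the resulting bijection $h \colon X \to V$ is then continuous at $p$ because the residual pieces converge to $p$, and continuous elsewhere because away from $p$ the matched pieces form a locally finite clopen family. A symmetric argument (or compactness, in the metrizable applications) gives continuity of $h^{-1}$, so $h$ is a homeomorphism and $V \cong X$. The main obstacle is precisely the continuity bookkeeping at $p$: one must guarantee that the residual clopen pieces are genuinely routed into the tail $\Sequence{W_n}:{n \geq N}$ and hence converge to $p$, since a clopen $\pi$-base of copies of $X$ alone provides no accumulation point to anchor the construction---this is the sole role of the convergent-sequence hypothesis.
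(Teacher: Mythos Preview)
The paper does not give its own proof of this lemma; it is stated with a \qed\ as a citation of Matveev, with a pointer to Medini for a clean write-up. So there is nothing in the paper to compare your argument against directly. Your outline is essentially the standard proof and is correct: the $\pi$-base produces, inside any nonempty clopen $V$, a clopen copy of $X$, and transporting the convergent sequence through that copy yields pairwise disjoint clopen $W_n \cong X$ with $W_n \to p \in V$; a Hilbert-hotel shift anchored at $p$ then gives the homeomorphism.

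One presentational remark. Your ``adaptive back-and-forth with two shrinking residues'' is heavier than needed, and as phrased the initial step is not quite well-posed (at the outset neither residue sits inside any $W_n$). The tidy version---which is what the cited sources do---is a single one-sided shift: with $R = X \setminus V$, pick clopen $R_n \subseteq W_n$ with $R_n \cong R$ (possible because $W_n \cong X$ and $R$ is clopen in $X$); then the map $h \colon V \to X$ that is the identity on $V \setminus \bigcup_n R_n$, sends $R_0$ homeomorphically onto $R$, and sends $R_{n+1}$ homeomorphically onto $R_n$ for each $n$, is the desired bijection. Your continuity analysis applies verbatim---the clopen pieces are locally finite off $p$, and at $p$ both $h$ and $h^{-1}$ are continuous because $R_n \to p$---and no two-sided bookkeeping is required.
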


For a nice proof see also \cite[Appendix A, Prop.\ 17]{medini}. 

\begin{proof}[Proof of Theorem~\ref{constructionhhomo1}]
The sets $F_f$ for $f \in \N^{<\N}$ form a clopen $\pi$-basis, all elements of which are homeomorphic to $X_{\script{E}^{(0)}}$ by Lemma~\ref{constrlemm1}(1). Moreover, $X_{\script{E}^{(0)}}$ has a dense set of one-point components by Lemma~\ref{constrlemm1}(4). Thus, $X_{\script{E}^{(0)}}$ is $h$-homogeneous by Lemma~\ref{matveevslem}, and it follows from Theorem~\ref{stronghomogeneousnonrec} that it is non-reconstructible.
\end{proof}

\subsection{Non-reconstructible spaces without a constant universal sequence}
\label{geometricex2}

All our examples of non-reconstructible spaces so far had a constant universal sequence. In this section, we answer our third question and show that this need not be the case: We use the construction presented in Section~\ref{section253} to build two non-reconstructible compact metrizable spaces without a constant universal sequence.

Despite not having a constant universal sequence, our first example is sufficiently self-similar so that non-reconstructibility can be verified directly. The second construction is much more subtle since it has a lot of rigidity built in. Analysing these two spaces in detail led the authors to the main characterisation in Theorem~\ref{recchartthm}.

\subsubsection*{A non-reconstructible space without constant universal sequence I}
For the first space, let $\set{E_n}:{n \in \w}$ be a list of pairwise non-homeomorphic planar continua, and consider the list $\script{E}^{(1)}=\set{(E_f,D_f)}:{f\in \N^{<\N}}$ such that for $f \colon k \to \N$ we have $E_f = E_{f(k-1)}$ and  $D_f = D_{f(k-1)}$. Consider the space $X_\script{E}$ as described in Section~\ref{section253}. Our list ensures that at the $n$th step in the recursion we replace the cubes always by the same building block $C_{E_n,D_n}$. Thus the homeomorphism type of $F_f \intersect X_\script{E}$ only depends on the length of $f \in \N^{<\omega}$ and we write $T_n$ for the homeomorphism type of $F_f \intersect X_{\script{E}}$ for any $f \in \N^n$.

\begin{mythm}\label{noconseq1}
\label{existuniversalsequence}
The space $X_{\script{E}^{(1)}}$ is an example of a non-reconstructible compact metrizable space without a constant universal sequence.
\end{mythm}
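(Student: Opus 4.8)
The plan is to exhibit a universal sequence on $X_{\script{E}^{(1)}}$ — which establishes non-reconstructibility via Proposition~\ref{recchartlemma} — and then to argue separately that no \emph{constant} universal sequence can exist, which by Lemma~\ref{pihomsequences} amounts to showing that $X_{\script{E}^{(1)}}$ is not $\pi$-homogeneous.

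First I would handle the universal sequence. Using Lemma~\ref{constrlemm1}(4), the set $C_1(X_{\script{E}^{(1)}})$ is dense, so every point $x$ lies in the closure of the $1$-point components; more usefully, by Lemma~\ref{constrlemm1}(2) the sets $\script{F}_n$ together form a dense open subset and their union $\script{F}$ is a clopen $\pi$-base (Lemma~\ref{constrlemm1}(3)). Given $x \in X_{\script{E}^{(1)}}$ and a neighbourhood basis $\sequence{V_m : m \in \N}$ of $x$ with $\closure{V_{m+1}} \subsetneq V_m$, I would pick, inside each $V_m \setminus \closure{V_{m+1}}$, a clopen member $F_{f_m}$ of the $\pi$-base; by passing to cubes of large enough index (going sufficiently deep into the recursion tree so that the diameter is small) one can arrange $F_{f_m} \subseteq V_m \setminus \closure{V_{m+1}}$, and since the homeomorphism type of $F_f \cap X_{\script{E}^{(1)}}$ depends only on the length of $f$, one can moreover arrange all the $F_{f_m}$ to have a common length, hence a common homeomorphism type, say $T_{k}$ for a fixed $k$ — or, more flexibly, take the lengths to be $1, 2, 3, \dots$ in order so that the resulting sequence has type $\sequence{T_n : n \in \N}$. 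Either way the $F_{f_m}$ are pairwise disjoint clopen sets converging to $x$ (compactness plus the nesting of the $V_m$), and the \emph{same type} can be used at every point $x$. This gives a universal sequence; by Proposition~\ref{recchartlemma}, $X_{\script{E}^{(1)}}$ is non-reconstructible.

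Second, and this is where the real work lies, I would show there is no constant universal sequence. Suppose for contradiction that $X_{\script{E}^{(1)}}$ has a constant universal sequence of type $\sequence{T}$; equivalently, by Lemma~\ref{pihomsequences} (applicable since $X_{\script{E}^{(1)}}$ is first countable and has no isolated points), $X_{\script{E}^{(1)}}$ has a clopen $\pi$-base of pairwise homeomorphic clopen sets. Every non-empty clopen subset of $X_{\script{E}^{(1)}}$ contains a member $F_f$ of the canonical $\pi$-base, and $F_f \cap X_{\script{E}^{(1)}} \cong T_{|f|}$. So a clopen $\pi$-base of pairwise homeomorphic elements would force: for every $f \in \N^{<\N}$ the cube $F_f$ contains a clopen copy of some fixed space $T$, and $T$ in turn contains a clopen copy of $T_{|f|}$ for arbitrarily large $|f|$. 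The contradiction should come from the fact that the planar continua $E_n$ are pairwise non-homeomorphic: the key invariant is that $F_f \cap X_{\script{E}^{(1)}}$ ``remembers'' which continuum $E_n$ sits at its base, and the collection of continua appearing as bases of sub-cubes of $F_f$ is exactly $\Set{E_m : m \geq |f|}$ (up to the indexing convention $E_f = E_{f(k-1)}$ at level $k$). Concretely, $T$ (being a clopen subset of some $F_f$, hence built from $C_{E_{|f|}, D_{|f|}}$-type pieces and their refinements) has $E_{|f|}$ realised as a component-base inside it but \emph{not} $E_m$ for $m < |f|$; yet $T$ must also embed as a clopen subset of $F_{f'}$ for $f'$ of larger length, whose sub-cube-bases are drawn from $\Set{E_m : m \geq |f'|}$ — and for $|f'| > |f|$ the continuum $E_{|f|}$ is not among these. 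The precise formulation: let $n(Y)$ be the least $n$ such that $E_n$ occurs as $\relativization{E_g}{F_g} \cap Y$ for some relativized cube inside the clopen set $Y$; then $n(F_f \cap X_{\script{E}^{(1)}}) = |f|$, $n$ is a homeomorphism invariant, and $n$ is monotone under clopen inclusion in the wrong direction, so no single $T$ can be a clopen subset of $F_f$ for all $f$. That rules out a $\pi$-base of pairwise homeomorphic clopen sets, completing the proof.

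\textbf{Main obstacle.} The delicate point is the last one: pinning down a homeomorphism invariant of the clopen pieces that cleanly separates the different levels $T_n$. One must be careful that ``the base continuum'' is genuinely topologically recoverable from $F_f \cap X_{\script{E}^{(1)}}$ — i.e.\ that the planar continuum $\relativization{E_f}{F_f}$ is distinguished inside $F_f \cap X_{\script{E}^{(1)}}$ from all the smaller relativized continua $\relativization{E_g}{F_g}$ with $g \supsetneq f$ (for instance as the unique non-degenerate component not contained in the dense open $\bigcup \script{F}_m$, or via a maximality property), and that the multiset of homeomorphism types of non-degenerate components of $X_{\script{E}^{(1)}}$ restricted to a clopen set is therefore a well-defined invariant. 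Granting that, the level-monotonicity argument is routine bookkeeping with the indexing in the definition of $\script{E}^{(1)}$; without it, one risks a circular appeal to the very self-similarity one is trying to rule out.
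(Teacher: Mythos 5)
Your architecture is the same as the paper's: build a universal sequence out of the level types $T_n$ to get non-reconstructibility via Proposition~\ref{recchartlemma}, and rule out a constant universal sequence by showing $X_{\script{E}^{(1)}}$ is not $\pi$-homogeneous via Lemma~\ref{pihomsequences}. Your second half is essentially right, and the ``main obstacle'' you flag dissolves: there is no need to recover the base continuum of a cube topologically, only to note that a component of a clopen subset $B'$ of $T_k$ is a component of $T_k$, hence is a singleton or homeomorphic to some $E_m$ with $m \geq k$. Since every non-empty clopen $B$ contains an entire non-degenerate component homeomorphic to some $E_n$ (the non-degenerate components are dense by Lemma~\ref{constrlemm1}(4), and $B$ is clopen), no homeomorphic copy of $B$ can sit inside $F_f$ once $|f| > n$, so such copies cannot form a $\pi$-base. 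That is exactly the paper's (much shorter) version of your $n(Y)$ invariant; no circularity arises because the multiset of component types of a clopen set is automatically a homeomorphism invariant.

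The genuine gap is in your first half. You cannot ``arrange all the $F_{f_m}$ to have a common length'': for a point $x$ with $\Set{x} = \bigcap_n F_{g\restriction n}$ along an infinite branch $g$, the sets $F_{g\restriction n} \cap X_{\script{E}^{(1)}}$ form a neighbourhood base of $x$, and every $\pi$-base element contained in $F_{g\restriction n}$ has length at least $n$; hence the lengths of clopen sets you place in the annuli $V_m \setminus \closure{V_{m+1}}$ are forced to tend to infinity. (A common length would hand you a constant universal sequence and contradict the very theorem you are proving.) For the same reason you cannot put a copy of $T_m$ into the $m$th annulus for \emph{every} $m$, so your fallback type $\sequence{T_1, T_2, T_3, \ldots}$ is not achieved by this construction either. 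What you actually obtain at each $x$ is a sequence of type $\Sequence{T_n}:{n > N_x}$ with $N_x$ depending on $x$, whereas a universal sequence requires a single type valid at every point. The repair is the step the paper invokes with ``as in Proposition~\ref{universalsequence}'': only the tail of a convergent sequence must lie near $x$, so fix open sets $U,V$ with disjoint closures, each containing pairwise disjoint clopen copies of every $T_n$ with $n > N$, and for $x \notin \closure{U}$ prepend to your tail the missing terms $T_n$ for $N < n \leq N_x$ taken inside $U$ (symmetrically for $x \notin \closure{V}$). This produces a universal sequence of the uniform type $\Sequence{T_n}:{n > N}$, after which Proposition~\ref{recchartlemma} applies and your proof is complete.
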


\begin{proof}
Since $T_{n+1}$ is a clopen subset of $T_n$, every point $x\in  \relativization{E_f}{F_f} $ is seen to be the limit of a sequence homeomorphic to some tail of $\Sequence{T_n}:{n \in \N}$, living in $F_f$. By density (Lemma~\ref{constrlemm1}(4)), every point of $X_{\script{E}^{(1)}}$ is the limit of a sequence homeomorphic to some tail of $\Sequence{T_n}:{n \in \N}$. 
As in the Universal Sequence Proposition~\ref{universalsequence}, this implies that $X_{\script{E}^{(1)}}$ has a universal sequence of type $\Sequence{T_n}:{n > N}$. Hence, this space is non-reconstructible.

To see that $X_{\script{E}^{(1)}}$ does not contain a constant universal sequence, we argue it is not $\pi$-homogeneous (Lemma~\ref{pihomsequences}). Indeed, any clopen subset $B  \subset X_{\script{E}^{(1)}}$ contains a non-trivial component, say $E_n$. But $T_k$ for $k > n$ does not contain a copy of $E_n$, and hence subsets homeomorphic to $B$ cannot form a $\pi$-base for $X_{\script{E}^{(1)}}$.
\end{proof}

One can also verify directly that $X_{\script{E}^{(1)}} \oplus \bigoplus_{n \in \N} T_n$ is a reconstruction of $X_{\script{E}^{(1)}}$.

\subsubsection*{A non-reconstructible space without constant universal sequence II}

For our last example, we again use the construction from Section~\ref{section253}. For our list $\script{E}^{(2)}=(E_f,D_f)$ we choose the $E_f$ to be pairwise non-homeomorphic planar continua such that each continuum appears only finitely often. So let $E_n$ be countably many distinct planar continua  embedded in $\left[\frac14,\frac34\right]^2 \times \singleton{\frac12}$ and $D_n \subset E_n$ enumerated countable dense subsets of $E_n$. For each $n$, partition $D_n$ into countably many finite consecutive subsequences $D_m^n=\set{d_k}:{k = N_m,\dots,N_{m+1}-1}$ such that $D_m^n$ is $2^{-m}$-dense in $E_n$.

We now let $E_{\sequence{\emptyset}} = E_0$, $E_{\sequence{n}} = E_k$ if $n \in D_k^0$. We then inductively define $E_{f\concat n} = E_k $ if $n \in D_k^m$ where $m$ is such that $E_f = E_m$.

Our choices above ensure that for any two $F_f,F_g$ for which $E_f = E_g$, the space $X \intersect F_f$ and $X \intersect F_g$ are homeomorphic (via the relativization homeomorphisms). We will thus define $X_n = X \intersect F_f$ where $n$ is such that $E_f = E_n$. 

\begin{mythm}\label{noconseq2}
\label{existuniversalsequence}
The space $X_{\script{E}^{(2)}}$ is an example of a non-reconstructible compact metrizable space in which every non-trivial component appears only finitely often.
\end{mythm}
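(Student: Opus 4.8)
The two claims are that $X_{\script{E}^{(2)}}$ is non-reconstructible and that every non-trivial component of it appears only finitely often. The second claim is essentially immediate from the construction: the non-trivial components of $X_{\script{E}^{(2)}}$ are precisely the relativized copies $\relativization{E_f}{F_f}$ for $f\in\N^{<\N}$, and by the recursive definition ($E_{f\concat n}=E_k$ only when $n\in D^m_k$ with $E_f=E_m$), a fixed continuum $E_k$ can appear as $E_f$ only when $f$ is obtained at a step governed by one of the finitely many indices $m$ with $E_m=E_k$; and at each such step the block $C_{E_f,D_f}$ inserts only countably many cubes, in each of which $E_k$ reappears only for the finitely many children $n$ lying in the relevant finite block $D^m_k$. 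Tracking this through the tree shows each homeomorphism type of continuum occurs at only finitely many nodes, hence (up to homeomorphism) finitely often. I would make this precise by induction on the length of $f$, counting at each level how many nodes carry a given continuum.

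For non-reconstructibility I would apply the Reconstruction Characterisation Theorem~\ref{recchartthm}: it suffices to exhibit a universal sequence. The idea mirrors the proof of Theorem~\ref{noconseq1}. By Lemma~\ref{constrlemm1}(1) the homeomorphism type of $F_f\cap X_{\script{E}^{(2)}}$ depends only on the subtree of continua hanging below $f$; write $X_n = X\cap F_f$ whenever $E_f=E_n$, as in the statement. Since $F_{f\concat k}\cap X$ is a clopen subset of $F_f\cap X$, and since $D^m_k$ being $2^{-m}$-dense forces the cubes $F_{f\concat k}$ with $E_{f\concat k}=E_n$ to accumulate onto every point of $\relativization{E_f}{F_f}$, each point of a non-trivial component $\relativization{E_f}{F_f}$ is the limit of a sequence of pairwise disjoint clopen sets drawn from $\set{F_{f\concat k}\cap X}:{k}$; moreover by choosing one child node of each relevant type at each level, one obtains at such a point a sequence whose $i$th term is (homeomorphic to) a clopen subset of $X_{n_i}$ for a suitable increasing sequence of types. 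By Lemma~\ref{constrlemm1}(4) the non-trivial components' union is co-dense, so every point of $X_{\script{E}^{(2)}}$ lies in the closure of $\bigcup_f \relativization{E_f}{F_f}$—in fact every point of $C_1(X_{\script{E}^{(2)}})$ is itself a limit of cubes $F_g\cap X$—so every point of the space is the limit of such a sequence. The remaining task is to arrange a \emph{single} type $\Sequence{T_n}:{n\in\N}$ that works simultaneously at all points; as in Proposition~\ref{universalsequence} this is done by passing to a tail and exploiting that any non-empty clopen set contains clopen copies of $F_g\cap X$ for nodes $g$ of arbitrarily large height, together with a two-open-set covering argument to handle all points uniformly.

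\textbf{Main obstacle.} The delicate point is producing the \emph{common} universal sequence type. Unlike in Theorem~\ref{noconseq1}, where all nodes of a given height are homeomorphic and a literal tail $\Sequence{T_n}:{n>N}$ works, here the relevant "levels" at different points of the space are indexed by different paths through the tree of continua, so the sequences of clopen sets converging to different points can a priori be of genuinely different types. The resolution I anticipate is to observe that because each continuum $E_n$ reappears infinitely often deeper in the tree (every block $D^m_k$ is non-empty, so every type spawns children of every type that must recur), the collection of available "approaching-cube" types is cofinal in a strong enough sense that a fixed sequence obtained from one fixed branch embeds, clopen-wise and tail-wise, into the approach at every point; this is exactly the phenomenon that the abstract machinery of Sections~\ref{section4}--\ref{section4} (thinness, refinement to a point-fixing system) was designed to handle, so once a universal sequence of \emph{some} type is in hand, Proposition~\ref{recchartlemma} finishes the argument. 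Verifying the cofinality/embedding claim carefully—essentially a bookkeeping argument on the labelled tree $\N^{<\N}$ with the $2^{-m}$-density constraints—is where the real work lies.
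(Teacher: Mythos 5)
Your treatment of non-reconstructibility follows the paper's route, and the part you flag as the ``main obstacle'' is in fact not one. The paper's entire argument for this half is: by the partitioning of $D_m$ into blocks $D^m_k$ that are $2^{-k}$-dense, every point of every non-trivial component $\relativization{E_f}{F_f}$ is the limit of a sequence of disjoint clopen child cubes of type $\Sequence{X_n}:{n>N}$ --- the \emph{same} tail at every such point, because every node $f$ with $E_f=E_m$ has, for every $k$, children of type $X_k$ positioned over the $2^{-k}$-dense set $D^m_k$. Density of the non-trivial components then propagates this to all of $X_{\script{E}^{(2)}}$, and one finishes exactly as in Theorem~\ref{noconseq1}, via the uniformization step of Proposition~\ref{universalsequence} and the Reconstruction Characterisation Theorem~\ref{recchartthm}. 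So no path-dependent cofinality or tail-wise embedding bookkeeping is needed; the two-open-set argument you cite already suffices, and your worry that different points see genuinely different types does not materialise.

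The genuine gap is in your argument that every non-trivial component appears only finitely often, and it surfaces as a contradiction inside your own proposal. From ``each node contributes only the finitely many children indexed by $D^m_k$ carrying $E_k$'' you infer ``$E_k$ occurs at only finitely many nodes''; this is a non sequitur, because there are infinitely many nodes at every level, and --- as you yourself observe when building the universal sequence --- every block $D^m_k$ is non-empty, so \emph{every} node, whatever its type $E_m$, has at least one child labelled $E_k$. Those two facts together give infinitely many occurrences of $E_k$, not finitely many; in particular your claim that $E_k$ can appear only below the finitely many $m$ with $E_m\cong E_k$ misreads the assignment rule, which attaches $E_k$-children to nodes of every type. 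You cannot simultaneously have each continuum recurring below every node (which your universal-sequence argument uses) and each continuum occurring at only finitely many nodes (which the second half of the theorem asserts), so your proposed induction on the length of $f$ cannot close as written. The ``finitely often'' property must be read off from the actual choice of the list $\script{E}^{(2)}$ --- the paper builds it into the construction and does not derive it from the per-node child count --- and reconciling it with the recurrence needed for the universal sequence is precisely the point you would have to settle carefully.
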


\begin{proof}
By the partitioning of $D_n$, every point $x\in  \relativization{E_f}{F_f} $ is the limit of a sequence homeomorphic to some tail of $\Sequence{X_n}:{ n>N}$. Now proceed as above.
\end{proof}

 If we choose pairwise incomparable $E_n$ (no continuous map of one onto another, \cite{Wara32}), then every homeomorphism of $X_\script{E}$ induces a permutation of the $E_f$.

%
%
%
%

	  
\end{document}